\numberwithin{equation}{section}
\newtheorem{theorem}{Theorem}[section]
\newtheorem{assumption}[theorem]{Assumption}
\newtheorem{lemma}[theorem]{Lemma}
\newtheorem{proposition}[theorem]{Proposition}
\newcommand{\dif}{\mathrm{d}}
\newcommand{\di}{\mathrm{div}}
\newcommand{\SUM}[3]{\sum\limits_{{#1}={#2}}^{#3}}
\newcommand{\intx}{\int_0^t \int_{\Omega} }
\begin{document}
\title[Spherically solutions for relaxed CNS system]{ Initial boundary value problems for 3-d Navier-Stokes equations with  hyperbolic heat conduction}
\author{Yuxi Hu and Reinhard Racke}
 \thanks{\noindent Yuxi Hu, Department of Mathematics, China University of Mining and Technology, Beijing, 100083, P.R. China, yxhu86@163.com\\
\indent Reinhard Racke, Department of Mathematics and Statistics, University of
Konstanz, 78457 Konstanz, Germany, reinhard.racke@uni-konstanz.de
}
\thanks{\noindent  }
\begin{abstract}
We study an initial boundary value problem for the 3-dimensional compressible Navier-Stokes equations with hyperbolic heat conduction, where the classical Fourier law is replaced by the Cattaneo-Christov constitutive relation. We focus on spherically symmetric solutions.   We establish the existence of uniform global small solutions to the resulting system. Furthermore, based on uniform a priori estimates, we rigorously justify both the relaxation limit and the vanishing viscosity limit.\\
{\bf Keywords}: Initial boundary value problem; hyperbolic heat conduction; spherically symmetric solution; global well-posedness; relaxation limit; vanishing viscosity limit\\
{\bf AMS classification code}: 35M13; 35Q35;
\end{abstract}
\maketitle

\section{Introduction}

The basic equations governing three-dimensional compressible fluid dynamics are given by
\begin{align}\label{1.1}
\begin{cases}
\partial_t\rho+\mathrm{div} (\rho u)=0,\\
\partial_t(\rho u) +\mathrm{div} (\rho u\otimes u)+\nabla p=\mathrm{div}\, S,\\
\partial_t (\rho {\mathcal E})+\mathrm{div}(\rho u {\mathcal E}+pu+q-Su) =0,
\end{cases}
\end{align}
for $(x,t)\in \Omega(\subset \mathbb R^3) \times(0,\infty)$. The quantities $\rho$, $u$, $p$, $\theta$, and $\mathcal{E}$ represent the fluid density, velocity, pressure, temperature, and specific energy, respectively. The quantities $q$ and $S$ denote the heat flux and stress tensor, respectively, and must be specified through constitutive relations to close the system \eqref{1.1}.

We assume the flow is Newtonian, which implies that the stress tensor $S$ is given by
\begin{align}\label{1.1a}
S=\mu(\theta)\left(\nabla u+(\nabla u)^T-\frac{2}{3} \di u I_3\right)+\lambda(\theta) \di u I_3,
\end{align}
where $\mu$ and $\lambda$ are the shear and bulk viscosities, respectively, and both are functions of the temperature $\theta$.

The formulation of the heat flux $q$ is more intricate. Instead of employing the classical Fourier law for heat conduction, we adopt the Cattaneo-Christov (CC) constitutive relation:
\begin{align}\label{1.1b}
\tau (\theta) \rho \left(\partial_t q+u\cdot\nabla q-q\cdot \nabla u+(\di u)q\right)+q+\kappa (\theta)\nabla \theta=0,
\end{align}
where $\tau(\theta)$ is the relaxation parameter accounting for the time lag in the response of heat flux to the temperature gradient, and $\kappa(\theta)$ is the heat conductivity coefficient. Both parameters depend on the temperature.

The Cattaneo-Christov relation \eqref{1.1b} was originally proposed  in the linearized case by Cattaneo \cite{CA} to resolve the paradox of infinite heat propagation speed implied by Fourier's law, and was later extended to an objective (frame-indifferent) nonlinear formulation by Christov \cite{CHR}. The CC model has been shown to be particularly relevant in extreme physical scenarios, such as instantaneous heating by laser pulses \cite{LE08}, heat conduction in nanometer-scale materials \cite{JSA11}, and the Rayleigh-B\'enard convection near MEMS devices \cite{GL03}. In fact, the dimensionless Cattaneo number, defined by $C=\frac{\tau \kappa}{L^2}$ where $L$ is a reference length scale, is used to quantify the significance of the CC model relative to Fourier’s law \cite{KH15}. When $\kappa$ is large or $L$ is small, the CC model can become particularly important.

Note that the second law of thermodynamics may not hold for the new CC model if the classical thermodynamic equation remains unchanged; see \cite{CHO, HR20, HuRa024, MO17}. Inspired by Coleman \textit{et al.} \cite{CHO}, we assume that the specific total energy is given by
\begin{align}\label{1.2}
{\mathcal E}=\frac{1}{2}u^2+e,
\end{align}
with the specific internal energy $e$ and the pressure $p$ given by
\begin{align}\label{1.3}
e=C_v \theta+a(\theta)q^2, \quad p=R \rho \theta,
\end{align}
where $a(\theta)=\frac{Z(\theta)}{\theta}-\frac{1}{2} Z^\prime(\theta)$ and $Z(\theta)=\frac{\tau(\theta)}{\kappa(\theta)}$.
$C_v$ and $R$ denote the heat capacity at constant volume and the gas constant, respectively. The quantities $p$ and $e$ satisfy the usual thermodynamic relation
$$
\rho^2 e_\rho=p-\theta p_\theta.
$$

For \(\tau(\theta) = 0\), the system \eqref{1.1}-\eqref{1.3} reduces to the classical compressible Navier-Stokes-Fourier system, for which the global well-posedness, blow-up of smooth solutions, and large-time behavior have been extensively studied. In particular, the local existence and uniqueness of smooth solutions were established by Serrin \cite{SE}, Nash \cite{NA} and Itaya \cite{ITA70} for initial data away from vacuum. Later, Matsumura and Nishida \cite{MN} obtained global smooth solutions for small initial data without vacuum. For large initial data, Xin \cite{X}, and Cho and Jin \cite{CJ}, showed that smooth solutions must blow up in finite time if the initial data contains vacuum. See \cite{DA1, DA2, LI, JZ01, JZ03, FE} for results on the global existence of weak solutions.

On the other hand, for \(\tau(\theta) > 0\), there are relatively few results. The authors  first studied in \cite{HR16} the well-posedness of strong solutions in Sobolev spaces, as well as the relaxation limit, under a linearized form of \eqref{1.1b}. The decay properties of the resulting solutions were established in \cite{LW23,TZZ24}. Crin-Barat, Kawashima, and Xu \cite{CKX24} obtained global-in-time well-posedness for small initial data in Besov spaces, along with a strong global relaxation limit. In this sense, the Cauchy problem for the system \eqref{1.1}-\eqref{1.3} has been extensively investigated, cf. the survey on our results in \cite{Ra025-cam}. However, to the best of our knowledge, initial boundary value problems have not yet been addressed.  We note that the initial boundary value problem  for compressible Navier-Stokes equations with Maxwell's law -- i.e. relaxation for the stress tensor -- has been considered recently, see \cite{HL25, HY25, HZ25}. 
As a first step in this direction, we study the initial boundary value problem for system \eqref{1.1}-\eqref{1.3} under the assumption of spherical symmetry.
We consider spherically symmetric solutions of the following form:
\begin{align}\label{1.4}
\rho(t,x)=\rho(t,r), \quad u(t,x)=u(t, r)\frac{x}{r}, \quad \theta(t,x)=\theta(t,r), \quad q(t,x)=q(t,r)\frac{x}{r},
\end{align}
where $x\in \Omega ;= \overline{B(0,1)}^c$ or $\Omega := B(0,2)\setminus \overline{B(0,1)}$, with $B(0,r)$ denoting the open ball of radius $r$ centered at the origin. Under this symmetry, the system \eqref{1.1}-\eqref{1.3} reduces to:
\begin{align}\label{1.5}
\begin{cases}
	\rho_t+(\rho u)_r+\frac{2}{r} \rho u = 0, \\
	\rho u_t+\rho u u_r+ p_r =\left( \left(\frac{4}{3} \mu(\theta)+\lambda(\theta)\right)\left(u_{r}+\frac{2}{r}u \right)\right)_r ,\\
	\rho e_\theta \theta_t + (\rho u e_\theta - \frac{2a(\theta)}{Z(\theta)} q)\theta_r + p\left(u_r+\frac{2}{r} u\right) + q_r + \frac{2}{r} q = \\
	\quad  \left(\frac{2}{\tau(\theta)}+\frac{4}{r} u\right) a(\theta) q^2 + \mu(\theta)\left( 2u_r^2+\frac{4}{r^2}u^2 - \frac{2}{3} \left(u_r+\frac{2}{r} u\right)^2\right) + \lambda(\theta)\left(u_r+\frac{2}{r} u\right)^2, \\
	\tau (\theta) \rho \left(q_t+u q_r+\frac{2}{r} u q\right) + q + \kappa(\theta) \theta_r = 0.
\end{cases}
\end{align}

We supplement system \eqref{1.5} with the initial data
\begin{align}\label{initial}
(\rho, u, \theta, q)|_{t=0} = (\rho_0, u_0, \theta_0, q_0),
\end{align}
possibly depending on $\tau$, and the boundary conditions
\begin{align}\label{1.6}
u|_{\partial \Omega} = q|_{\partial \Omega} = 0,
\end{align}
with the domain $\Omega$ reduced to either $(1, \infty)$ or to $(1, 2)$.

To establish estimates independent of $\tau(\theta)$, $\mu(\theta)$, and $\lambda(\theta)$ and to facilitate taking the limit, we assume the following relations:
\[
\tau(\theta) = \tau g(\theta), \quad \mu(\theta) = \mu h(\theta), \quad \lambda(\theta) = \lambda l(\theta),
\]
where $g$, $h$, and $l$ are smooth (and positive) functions of $\theta$, and $\tau$, $\mu$, and $\lambda$ are positive constants. The following assumptions are used throughout the paper:

\begin{assumption}\label{ass}
\mbox{}\\
(1) The initial and boundary data satisfy the usual compatibility conditions:
\begin{align}\label{compatibility condition}
	\partial_t^k u(0,x)\big|_{\partial\Omega} = \partial_t^k q(0,x)\big|_{\partial\Omega} = 0, \quad k = 0, 1,
\end{align}
where $\partial_t u(0,x)$ and $\partial_t q(0,x)$ are defined recursively using equations $\eqref{1.5}_2$ and $\eqref{1.5}_4$, respectively.\\[0.5em]
(2) The possible dependence of the initial data on $\tau$ is compatible in the following sense:
\begin{align}\label{compatibility}
\left\|r\left(q_0 + \kappa(\theta_0)(\theta_0)_r\right)\right\|_{H^1} = O(\sqrt{\tau}), \quad \text{as } \tau \to 0.
\end{align}
\end{assumption}

To formulate our theorem, we introduce the following energy functional:
\begin{align*}
E(t) := & \sup_{0 \le s \le t} \|r(\rho - 1, u, \theta - 1, q)\|_{L^2}^2 + \|r(\rho_r, u_r, \theta_r, q_r)\|_{L^2}^2 + \|(u, q)\|_{L^2}^2+ \|(u_r, q_r)\|_{L^2}^2   \\
& + \|r(\rho_{rr}, u_{rr}, \theta_{rr}, q_{rr})\|_{L^2}^2  + \|r(\rho_t, u_t, \theta_t, \sqrt{\tau} q_t)\|_{L^2}^2  + \|r(\rho_{tr}, u_{tr}, \theta_{tr}, \sqrt{\tau} q_{tr})\|_{L^2}^2 \\
&+ \tau^2 \|r(\rho_{tt}, u_{tt}, \theta_{tt}, \sqrt{\tau} q_{tt})\|_{L^2}^2+ \left(\frac{4}{3}\mu+\lambda\right)^2 \|r(u_{rrr}, \tau u_{trr})\|_{L^2}^2
\end{align*} The corresponding dissipation functional is defined as:
\begin{align*}
\mathcal{D}(t) := & \|r q\|_{L^2}^2 + \|r D(\rho, u, \theta, q)\|_{L^2}^2 + \|(u, q)\|_{L^2}^2 + \|r(\rho_{rr}, u_{rr}, \theta_{rr}, q_{rr})\|_{L^2}^2  \\
& + \|(u_r, q_r, u_t, q_t)\|_{L^2}^2 + \|r(\rho_{tr}, u_{tr}, \theta_{tr}, q_{tr})\|_{L^2}^2 + \|r(\rho_{tt}, u_{tt}, \theta_{tt})\|_{L^2}^2   \\
& + \tau^2 \|r q_{tt}\|_{L^2}^2+\left( \frac{4}{3} \mu+\lambda \right)\| r (u_{trr}, u_{ttr})\|_{L^2}^2
\end{align*}
where \( D := (\partial_t, \partial_r) \).

Our first result concerns the global-in-time small well-posedness of the initial boundary value problem \eqref{1.5}-\eqref{1.6}, with estimates that are uniform with respect to the parameters \(\tau\), \(\lambda\), and \(\mu\).
\begin{theorem}\label{th1.1}
Let Assumption \ref{ass} hold and $r(\rho_0-1, \theta_0-1, q_0)\in H^{{3}}$,  $ru_0\in H^4$. Then there exists a small constant \(\epsilon_0 > 0\) such that if
\[
\|r(\rho_0-1, \theta_0-1, q_0)\|_{H^{3}}^2+\|ru_0\|_{H^4}^2  \le \epsilon_0,
\]
there exists a unique global solution \((\rho, u, \theta, q)\) to the initial boundary value problem \eqref{1.5}-\eqref{1.6} such that
\[
(\rho-1, \theta-1, q)\in \bigcap\limits_{k=0}^2 C^k([0,\infty), H^{2-k}),\, u\in \bigcap\limits_{k=0}^1 C^k([0,\infty), H^{3-k}) \bigcap C^2([0,\infty), L^2)
\]
and satisfy the uniform estimate
\[
E(t)+\int_0^\infty \mathcal D(t) \dif t \le C E(0),
\]
where \(C\) is a constant independent of the parameters \(\tau\), \(\lambda\), and \(\mu\).
\end{theorem}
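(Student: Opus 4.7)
The plan is to follow the classical local-existence plus uniform-a-priori-estimates plus continuation strategy. Local existence of a smooth solution in the stated class can be obtained by standard iteration on a linearized semi-implicit scheme, treating the momentum equation as parabolic, the CC relation as an ODE (when $\tau>0$) for $q$ along the flow, and the continuity and internal-energy equations as transport--type; the compatibility conditions in Assumption \ref{ass}(1) give the required regularity at $t=0$, and the positivity of $\rho$ and $\theta$ is preserved for short times by smallness. The nontrivial task is to close the a priori bound $E(t)+\int_0^t\mathcal D(s)\,\dif s\le C E(0)+CE(t)^{3/2}$ with constant independent of $\tau,\mu,\lambda$, since a standard bootstrap then propagates smallness and extends the solution to $[0,\infty)$. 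Throughout, the weights of $r$ in $E$ and $\mathcal D$ are precisely the ones needed to identify the weighted $L^2_r$ norms on $(1,\infty)$ (or $(1,2)$) with natural $L^2$ norms on the 3D annular or exterior domain, so that spherically symmetric Sobolev embeddings are available and all boundary terms at $r=1$ (and, where relevant, $r=2$) vanish.

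I would build the estimate hierarchically. First, a basic energy identity obtained by testing $\eqref{1.5}_1$--$\eqref{1.5}_4$ against $r^2$ times the natural thermodynamic multipliers (essentially $(\rho-1)$, $u$, $(\theta-1)/\theta$, $q/(\rho\kappa(\theta))$) produces the relaxation damping $\|rq\|_{L^2}^2$ from the CC equation and the viscous dissipation $\|r(u_r+2u/r)\|_{L^2}^2$, together with a good sign for the temperature equation. Second, differentiating the system once in $r$ and once in $t$ yields the bounds on $\|r(\rho_r,u_r,\theta_r,q_r)\|$ and on $\|r(\rho_t,u_t,\theta_t,\sqrt\tau q_t)\|$; the weight $\sqrt\tau$ on $q_t$ is \emph{forced} by the relaxation form $\tau\rho q_t = -q - \kappa\theta_r - \cdots$, which only allows $\|\sqrt\tau q_t\|\lesssim \|q\|+\|\theta_r\|+\mathrm{n.l.}$ and matches the initial-data size prescribed by \eqref{compatibility}. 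Third, the second-order estimates $\|r(\rho_{rr},u_{rr},\theta_{rr},q_{rr})\|$ and the mixed ones $\|r(\rho_{tr},u_{tr},\theta_{tr},\sqrt\tau q_{tr})\|$ follow by further differentiation, and the top-order spatial bound $\left(\frac43\mu+\lambda\right)^2\|ru_{rrr}\|^2$ is extracted \emph{algebraically} from $\eqref{1.5}_2$ by solving for $u_{rrr}$ in terms of $(\rho_{rr},\theta_{rr},\ldots)$; the prefactor $(\frac43\mu+\lambda)^2$ is exactly the loss incurred, and it ensures the estimate degenerates controllably as $\mu,\lambda\to 0$. Finally, the second-time-derivative part $\tau^2\|r(\rho_{tt},u_{tt},\theta_{tt},\sqrt\tau q_{tt})\|$ is produced by differentiating twice in $t$; the $\tau^2$ weight is the minimal one consistent with controlling $q_{tt}$ from the (twice time-differentiated) CC relation and with the compatibility \eqref{compatibility}, so that nothing blows up in the relaxation limit.

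The main obstacle is controlling the nonlinear terms while preserving uniformity in both singular parameters. The convective and pressure terms in momentum and energy are routine, but the CC equation contains the frame-indifferent cross terms $\tau\rho(u q_r+\tfrac2r uq - q u_r - (\di u)q)$ whose derivatives are borderline critical: after one or two differentiations they must be absorbed by the dissipation $\|rq\|^2+\|q_r\|^2+\|(u,q)\|^2+\|(u_r,q_r)\|^2$ alone, with no help from a $\kappa\Delta\theta$-type term. This forces the use of sharp spherically symmetric Sobolev embeddings such as $\|f\|_{L^\infty(r\ge 1)}\lesssim \|rf\|_{L^2}^{1/2}\|(rf)_r\|_{H^1}^{1/2}$, together with the vanishing boundary condition $u=q=0$, to trade $L^\infty$ bounds for the available weighted $H^2$ norms and close every nonlinear term with a factor of $\sqrt{E(t)}$. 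A secondary difficulty is that differentiating $\eqref{1.5}_2$ in $t$ forces one to estimate $u_{trr}$, which would lose a derivative were it not for the fact that the degenerate dissipation $(\frac43\mu+\lambda)\|r u_{trr}\|^2$ is produced by the viscous term itself; this term must be invoked at exactly the right stage (simultaneously with the $u_{rrr}$ and $u_{ttr}$ identities) so that the $\mu,\lambda$-weighted quantities absorb each other without circularity. Once this bookkeeping closes uniformly, smallness of $E(0)$ propagates, continuation gives global existence, and the stated uniform bound is the output.
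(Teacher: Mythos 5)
Your a priori estimate scheme is essentially the one the paper uses: a zeroth-order entropy-type identity built on the modified internal energy $e=C_v\theta+a(\theta)q^2$ and the convex quantities $\rho\ln\rho-\rho+1$, $\theta-\ln\theta-1$; first-order estimates in $t$ and $r$ with multipliers of the form $r^2u_t$, $\tfrac{1}{\theta}r^2\theta_t$, $\tfrac{1}{\kappa(\theta)\theta}r^2q_t$ and their $r$-analogues; recovery of the remaining first-order dissipation from the equations themselves; second-order and mixed estimates; the $\tau^2$-weighted $tt$-estimates; and the algebraic extraction of $\bigl(\tfrac43\mu+\lambda\bigr)^2\|ru_{rrr}\|_{L^2}^2$ from the momentum equation. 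The closure $E(t)+\int_0^t\mathcal D\le C\bigl(E_0+E^{1/2}(t)\int_0^t\mathcal D+E^2(t)\bigr)$ and the continuation argument are likewise as in the paper.

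The genuine gap is in your local existence step. Treating the Cattaneo--Christov relation as an ODE for $q$ along the flow (with $\kappa(\tilde\theta)\tilde\theta_r$ as a source from the previous iterate) and the internal-energy equation as a transport equation for $\theta$ (with $q_r+\tfrac2r q$ as a source) loses derivatives: $q$ inherits only the regularity of $\theta_r$, and $\theta$ then inherits only the regularity of $q_r\sim\theta_{rr}$, so the iteration map does not send a fixed Sobolev class into itself and no fixed point can be produced this way. The paper's Theorem \ref{loc} avoids this by keeping $\theta$ and $q$ \emph{coupled} as a single linear first-order symmetric hyperbolic system $A^0U_t+A^1U_r=F$ for $U=(\theta,q)$ (after dividing the CC equation by $\kappa(\tilde\theta)$), whose off-diagonal entries equal to $1$ encode exactly the $q_r$ and $\theta_r$ cross-couplings, so that no derivative is lost. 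Moreover, since this is an initial \emph{boundary} value problem for a hyperbolic system, one must check before invoking the Rauch--Massey regularity theory that the boundary condition $q|_{\partial\Omega}=0$ is non-characteristic and maximally nonnegative in the sense of Friedrichs; the paper does this explicitly, using $(\tilde u,\tilde q)|_{\partial\Omega}=0$ to get $\det\bigl((A^0)^{-1}A^1\bigr)|_{\partial\Omega}\neq 0$ and testing the boundary matrix on the null space of the boundary operator. Your proposal omits both the coupling and the boundary-admissibility verification, so the local solution in the class required to launch your a priori estimates is not actually constructed.
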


Building on the uniform estimates established in Theorem \ref{th1.1}, we now state our second result regarding the vanishing viscosity limit.
\begin{theorem}\label{th1.2}
 Fix $\tau>0$. Let $\varepsilon=( \mu, \lambda)$ and $(\rho_\tau^\varepsilon, u_\tau^\varepsilon, \theta_\tau^\varepsilon, q_\tau^\varepsilon)$ be the global solutions obtained in Theorem \ref{th1.1}, then there exists
$( \rho_\tau^0,  u_\tau^0,  \theta_\tau^0, q_\tau^0)\in L^\infty((0,\infty); H^2 ) \cap C^0((0,\infty); H^{2-\delta}) $ for any $\delta>0$, such that, as $\varepsilon\rightarrow 0$, up to subsequences, for any $T>0$,
 \begin{align}
(\rho_\tau^\varepsilon, u_\tau^\varepsilon, \theta_\tau^\varepsilon, q_\tau^\varepsilon) \rightarrow (\rho_\tau^0, u_\tau^0, \theta_\tau^0, q_\tau^0) \quad \mbox{\rm  strongly\quad in } \quad C([0,T],H_{loc}^{2-\delta} )
\end{align}
where $( \rho_\tau^0,  u_\tau^0,  \theta_\tau^0, q_\tau^0)$ is a strong solution to the three-dimensional hyperbolized compressible Euler-Cattaneo-Christov system \eqref{4.3} (below), satisfying the initial and boundary conditions \eqref{initial} and \eqref{1.6}.
\end{theorem}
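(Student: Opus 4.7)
The plan is to argue by compactness, anchored on the $\varepsilon$-uniform bound $E(t) + \int_0^\infty \mathcal D(s)\,\dif s \le C E(0)$ supplied by Theorem \ref{th1.1}. Since $\tau > 0$ is held fixed and $E(0)$ is independent of $\varepsilon = (\mu, \lambda)$, the family $\{(\rho^\varepsilon_\tau, u^\varepsilon_\tau, \theta^\varepsilon_\tau, q^\varepsilon_\tau)\}_\varepsilon$ is uniformly bounded in $L^\infty([0,T]; H^2)$ (the weight $r$ is harmless because $r \ge 1$ on $\Omega$), while the corresponding first time derivatives lie uniformly in $L^\infty([0,T]; H^1)$ and second time derivatives in $L^\infty([0,T]; L^2)$ (with $\tau$-dependent constants that are fine for $\tau$ fixed). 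By Banach--Alaoglu I can extract a subsequence converging weakly-$*$ to a limit $(\rho^0_\tau, u^0_\tau, \theta^0_\tau, q^0_\tau) \in L^\infty([0,T]; H^2)$; note that the $H^3$-information on $u^\varepsilon_\tau$ does not pass to the limit, as the corresponding estimate in $E(t)$ carries a vanishing prefactor $(\tfrac{4}{3}\mu+\lambda)^2$.

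To upgrade to the strong convergence in $C([0,T]; H^{2-\delta}_{loc})$ claimed in the theorem, I would apply the Aubin--Lions--Simon lemma on each relatively compact subdomain $K \Subset \Omega$. The compact Sobolev embedding $H^{2}(K) \hookrightarrow H^{2-\delta}(K)$ (for any $\delta > 0$), combined with the equicontinuity in time provided by the uniform bound on $\partial_t$ in $L^\infty([0,T]; H^1)$, yields relative compactness in $C([0,T]; H^{2-\delta}(K))$; a diagonal extraction across an exhausting sequence of compacts then delivers strong convergence in $C([0,T]; H^{2-\delta}_{loc})$ along a common subsequence. The claimed regularity $(\rho^0_\tau, u^0_\tau, \theta^0_\tau, q^0_\tau) \in L^\infty((0,\infty); H^2) \cap C^0((0,\infty); H^{2-\delta})$ then follows by taking $T \uparrow \infty$ and using the uniform-in-$t$ bound on $E$.

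The passage to the limit in system \eqref{1.5} is straightforward for the hyperbolic transport, pressure, and Cattaneo--Christov terms: products of quantities converging strongly in $C([0,T]; H^{2-\delta}_{loc})$ with $2 - \delta > 3/2$ converge to the expected products because $H^{2-\delta}$ has algebra structure and embeds into $L^\infty_{loc}$. The genuinely delicate terms are the viscous flux $\bigl((\tfrac{4}{3}\mu(\theta^\varepsilon_\tau) + \lambda(\theta^\varepsilon_\tau))(u^\varepsilon_{\tau,r} + \tfrac{2}{r} u^\varepsilon_\tau)\bigr)_r$ in the momentum equation and the viscous heating on the right-hand side of the energy equation. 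The crucial observation is that $E(t)$ gives the uniform bound $(\tfrac{4}{3}\mu+\lambda)^2 \|r u^\varepsilon_{\tau,rrr}\|_{L^2}^2 \le C E(0)$, so that $(\tfrac{4}{3}\mu+\lambda) u^\varepsilon_{\tau,rrr}$ stays bounded while its prefactor vanishes, and the full viscous flux therefore tends to zero in $L^\infty([0,T]; L^2_{loc})$. The quadratic heating terms $\mu(\theta^\varepsilon_\tau)(u^\varepsilon_{\tau,r})^2$ and $\lambda(\theta^\varepsilon_\tau)(u^\varepsilon_{\tau,r} + \tfrac{2}{r} u^\varepsilon_\tau)^2$ vanish more directly, since $u^\varepsilon_{\tau,r}$ is uniformly bounded in $L^\infty_{t,x}$. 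Thus the limit satisfies the inviscid Euler--Cattaneo--Christov system \eqref{4.3}, and the boundary conditions $u|_{\partial\Omega} = q|_{\partial\Omega} = 0$ together with the initial data are preserved through strong convergence and the trace theorem (valid since $2-\delta > 1/2$).

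The main obstacle I anticipate is less conceptual than it is careful bookkeeping: one must track every factor of $\mu$ and $\lambda$ through each viscous coupling, verifying that every such term vanishes in the correct topology while the nonlinear coefficients $\mu(\theta^\varepsilon_\tau) = \mu h(\theta^\varepsilon_\tau)$ and $\lambda(\theta^\varepsilon_\tau) = \lambda l(\theta^\varepsilon_\tau)$ retain their $\varepsilon$-independent structural form so that passing to zero in front of them is legitimate. The one-dimensional weighted setting also requires some mild care in converting the $r$-weighted norms intrinsic to $E$ into the local Sobolev norms to which Aubin--Lions--Simon applies, but this is routine since $r \ge 1$ uniformly on $\Omega$.
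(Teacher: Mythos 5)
Your proposal is correct and follows essentially the same route as the paper: uniform-in-$\varepsilon$ bounds from Theorem \ref{th1.1}, weak-$*$ extraction, an Aubin--Lions type compactness argument giving strong convergence in $C([0,T];H^{2-\delta}_{loc})$, and passage to the limit with every viscous term vanishing. One small remark: the viscous flux $\bigl((\tfrac{4}{3}\mu(\theta)+\lambda(\theta))(u_r+\tfrac{2}{r}u)\bigr)_r$ contains at most second derivatives of $u$, so its vanishing follows from the unweighted bound on $\|r u^\varepsilon_{\tau,rr}\|_{L^2}$ in $E(t)$ (the flux is $O(\mu+\lambda)$ in $L^\infty_t L^2$); your appeal to the bound $(\tfrac{4}{3}\mu+\lambda)^2\|r u^\varepsilon_{\tau,rrr}\|_{L^2}^2\le CE(0)$ is unnecessary, and mere boundedness of $(\tfrac{4}{3}\mu+\lambda)u^\varepsilon_{\tau,rrr}$ would not by itself force anything to zero.
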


The next result shows the global relaxation limit.
\begin{theorem}\label{th1.3}
Fix $\mu>0, \lambda>0$. Let $\varepsilon=( \mu, \lambda)$  and  $(\rho_\tau^\varepsilon, u_\tau^\varepsilon, \theta_\tau^\varepsilon, q_\tau^\varepsilon)$  be the global solutions obtained in Theorem \ref{th1.1}, then there exists
$(\rho_0^\epsilon-1, \theta_0^\epsilon-1, q_0^\epsilon)\in L^\infty((0,\infty); H^2 ) \cap C^0((0,\infty); H^{2-\delta} )$ and $u_0^\epsilon \in  L^\infty((0,\infty); H^3 ) \cap C^0((0,\infty); H^{3-\delta} )$for any $\delta>0$, such that, as $\tau\rightarrow 0$, up to subsequences, for any $T>0$,
 \begin{align}
(\rho_\tau^\varepsilon, \theta_\tau^\varepsilon, q_\tau^\varepsilon) \rightarrow (\rho_0^\epsilon, \theta_0^\epsilon, q_0^\epsilon) \quad \mbox{\rm  strongly\quad in } \quad C([0,T],H_{loc}^{2-\delta} )
\end{align}
and
\[
 u_\tau^\varepsilon  \rightarrow u_0^\epsilon \quad \mbox{\rm  strongly\quad in } \quad C([0,T],H_{loc}^{3-\delta} )
 \]
where $q_0^\epsilon=-\kappa(\theta_0^\epsilon) \partial_r \theta_0^\epsilon, a.e., $ and $( \rho_0^\epsilon, u_0^\epsilon, \theta_0^\epsilon)$ is a strong solution to the three-dimensional compressible Navier-Stokes-Fourier system in spherical symmetry \eqref{4.5} (below), satisfying the initial and boundary conditions \eqref{initial} and \eqref{1.6} with $q_0=-\kappa(\theta_0)(\theta_0)_r$.
\end{theorem}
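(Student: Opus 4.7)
The plan is to exploit the uniform-in-$\tau$ bounds from Theorem~\ref{th1.1}, extract converging subsequences by an Aubin--Lions--Simon compactness argument, and recover Fourier's law algebraically from the Cattaneo--Christov relation. Throughout this sketch $\varepsilon=(\mu,\lambda)$ is fixed and all constants are independent of $\tau\in(0,1]$.

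First, combining Theorem~\ref{th1.1} with Assumption~\ref{ass}(2) yields the uniform estimate
\begin{align*}
\sup_{\tau\in(0,1]}\Bigl(E(t)+\int_0^\infty\mathcal D(s)\,\dif s\Bigr)\le C.
\end{align*}
Consequently $(\rho_\tau^\varepsilon-1,\theta_\tau^\varepsilon-1,q_\tau^\varepsilon)$ is bounded in $L^\infty(0,\infty;H^2)$, $u_\tau^\varepsilon$ is bounded in $L^\infty(0,\infty;H^3)$ (using the $(\tfrac{4}{3}\mu+\lambda)^2\|ru_{rrr}\|_{L^2}^2$ contribution, now uniformly controlled since $\mu,\lambda$ are fixed), and $(\rho_t,u_t,\theta_t)$ are bounded in $L^\infty(0,\infty;H^1)$. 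Banach--Alaoglu then provides weak-$\ast$ limits $(\rho_0^\varepsilon,u_0^\varepsilon,\theta_0^\varepsilon,q_0^\varepsilon)$ with the required regularity. On every compact subset $\Omega'\Subset\Omega$, the Aubin--Lions--Simon lemma with the compact embedding $H^2(\Omega')\Subset H^{2-\delta}(\Omega')$ and the uniform time-derivative control upgrades these to strong convergence of $(\rho_\tau^\varepsilon,\theta_\tau^\varepsilon)$ in $C([0,T];H^{2-\delta}(\Omega'))$ and of $u_\tau^\varepsilon$ in $C([0,T];H^{3-\delta}(\Omega'))$.

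The heat flux is more delicate, because $E(t)$ controls $q_t$ only through the $\sqrt{\tau}$-weighted norm $\|r\sqrt{\tau}q_t\|_{L^2}$, so a direct Aubin--Lions argument fails. Instead, I would read the Cattaneo--Christov relation \eqref{1.5}$_4$ as
\begin{align*}
q_\tau^\varepsilon+\kappa(\theta_\tau^\varepsilon)\partial_r\theta_\tau^\varepsilon
=-\tau g(\theta_\tau^\varepsilon)\rho_\tau^\varepsilon\!\left(q_t+u\,q_r+\tfrac{2}{r}uq\right),
\end{align*}
and observe that $\|\tau q_t\|_{L^2}\le\sqrt{\tau}\,\|\sqrt{\tau}q_t\|_{L^2}=O(\sqrt{\tau})$, while $\|\tau(uq_r+uq/r)\|_{L^2}=O(\tau)$ on each compact set. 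Hence the right-hand side tends to zero strongly in $L^\infty(0,T;L^2(\Omega'))$; combined with the strong convergence of $\kappa(\theta_\tau^\varepsilon)\partial_r\theta_\tau^\varepsilon$ from the previous step, this forces $q_\tau^\varepsilon\to -\kappa(\theta_0^\varepsilon)\partial_r\theta_0^\varepsilon$ strongly in $C([0,T];L^2(\Omega'))$. Interpolation with the uniform $H^2$ bound upgrades this to $C([0,T];H^{2-\delta}(\Omega'))$ and identifies $q_0^\varepsilon=-\kappa(\theta_0^\varepsilon)\partial_r\theta_0^\varepsilon$ almost everywhere.

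With these strong convergences in hand I would pass to the limit in each equation of \eqref{1.5}. The mass and momentum equations converge to their classical Navier--Stokes counterparts. For the energy equation, $e=C_v\theta+a(\theta)q^2$ reduces to $C_v\theta_0^\varepsilon$ since $a(\theta)=O(\tau)$ by its definition $a=Z/\theta-Z'/2$ with $Z=\tau g/\kappa$. Although the coefficient $2a(\theta)/Z(\theta)=2/\theta-(\log Z)'$ is $\tau$-independent, a short computation using the limiting Fourier identity $q=-\kappa\theta_r$ shows that the contribution $-(2a/Z)q\theta_r$ on the left cancels exactly the relaxation source $(2/\tau)a(\theta)q^2$ on the right, so the equation collapses to the standard NSF energy balance. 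All remaining $\tau$-prefactored terms vanish as in the previous paragraph. The resulting limit is precisely the spherically symmetric compressible Navier--Stokes--Fourier system \eqref{4.5}. Finally, Assumption~\eqref{compatibility} guarantees $q_0+\kappa(\theta_0)(\theta_0)_r\to 0$ in $H^1$, so the initial data are Fourier-consistent; the boundary conditions \eqref{1.6} survive the limit by the trace theorem applied to the strong $C([0,T];H^{2-\delta}_{loc})$ convergence. The principal obstacle is the convergence of $q$: since the hyperbolic Cattaneo--Christov equation does not provide a $\tau$-independent bound on $q_t$, compactness in time for $q$ is unavailable, and the crucial trick is to read Fourier's law algebraically off the equation itself, exploiting the $\sqrt{\tau}$-weights built into $E(t)$ to annihilate the relaxation term.
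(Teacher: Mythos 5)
Your proposal is correct and follows the same overall strategy as the paper (uniform-in-$\tau$ bounds from Theorem \ref{th1.1}, weak-$\ast$ limits, Aubin--Lions--Simon compactness on compact subsets, then passage to the limit in \eqref{1.5}), but it diverges on one point, and the premise for that divergence is mistaken. You claim that ``a direct Aubin--Lions argument fails'' for $q_\tau^\varepsilon$ because $E(t)$ only controls $\sqrt{\tau}\,q_t$; however, the \emph{dissipation} functional $\mathcal D(t)$ contains the unweighted terms $\|r D(\rho,u,\theta,q)\|_{L^2}^2$ and $\|r(\rho_{tr},u_{tr},\theta_{tr},q_{tr})\|_{L^2}^2$, so Theorem \ref{th1.1} gives $\int_0^\infty\|\partial_t q_\tau^\varepsilon\|_{H^1}^2\,\dif t\le CE(0)$ uniformly in $\tau$ (recall $r\ge 1$ on $\Omega$). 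This is exactly what the paper uses: $q$ is treated on the same footing as $\rho$ and $\theta$ in the compactness step, and the Cattaneo--Christov relation is invoked only afterwards, in the weaker form $\tau(\theta_\tau^\varepsilon)\rho_\tau^\varepsilon(\partial_t q_\tau^\varepsilon+u_\tau^\varepsilon\partial_r q_\tau^\varepsilon+\tfrac{2}{r}u_\tau^\varepsilon q_\tau^\varepsilon)\rightharpoonup 0$ in $\mathcal D'$, solely to \emph{identify} $q_0^\varepsilon=-\kappa(\theta_0^\varepsilon)\partial_r\theta_0^\varepsilon$. Your workaround --- reading Fourier's law off the equation in $L^\infty_tL^2_{loc}$ using $\|\tau q_t\|_{L^2}=O(\sqrt{\tau})$ and then interpolating with the $H^2$ bound --- is nonetheless valid and self-contained; it just solves a problem that does not arise. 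On the other hand, your explicit verification that $-\tfrac{2a(\theta)}{Z(\theta)}q\theta_r$ cancels $\tfrac{2}{\tau(\theta)}a(\theta)q^2$ once $q=-\kappa(\theta)\theta_r$ (both being $O(1)$ individually, since $a=O(\tau)$ but $a/Z$ and $a/\tau(\theta)$ are not small) is a correct and worthwhile detail that the paper's one-line passage to \eqref{4.5} leaves implicit, as is the observation that $e_\theta\to C_v$ because $a'=O(\tau)$.
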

The paper is organized as follows. In Section 2 we state the local existence theorem for the system \eqref{1.5}-\eqref{1.6}. The central Section 3 provides a priori estimates, being necessary for extending a local solution to a global one and for carrying out the singular limits. In Section 4 the proofs the main theorems are given.

Finally, we introduce some notation. $W^{m,p}=W^{m,p}(\Omega),0\le m\le \infty,1\le p\le \infty$, denotes the usual Sobolev space with norm $\left\| \cdot \right\|_{W^{m,p}}$, $H^m$ and $L^p$ stand for $ W^{m,2}$ resp. $W^{0,p}$.

 \section{Local well-posedness}

In this section, we establish the local well-posedness for system \eqref{1.5}-\eqref{1.6}. We note that, to our knowledge, general results for the initial boundary value problem of hyperbolic-parabolic coupled systems are lacking. To obtain the desired well-posedness, we follow the approach in \cite{Zheng} (pp. 119-124).
First, Let $M_0:=\SUM k 0 2 \| D^k(\rho-1, u, \theta-1, q)|_{t=0}\|_{L^2}^2 <\infty$. Let $M_1$ be a positive constant such that when $\sup_{0\le t\le h}\|(\rho-1, u, \theta-1, q)\|_{H^2} \le M_1$,
\[
\min \{ \min_{[0,h]\times \Omega} \rho(t,r), \min_{[0,h]\times \Omega}  \theta (t,r), \min_{[0,h]\times \Omega}  e_{\theta}(t,r)\} =:\gamma >0.
\]

Introduce  the space
\begin{align*}
X_h(M_2, M_3)=\left\{ (\rho, u, \theta, q)| (\rho-1, \theta-1, q)\in \bigcap\limits_{k=0}^2 C^k([0,h], H^{2-k}), u\in C([0,h], H^3),\right.\\
u_t\in C([0,h], H^2), u_{tt} \in C([0,h], L^2) \cap L^2([0,h], H^1))\\
\sup_{0\le t\le h} \SUM k 0 2 \|D^k(\rho-1, u, \theta-1, q)\|_{L^2}^2 \le M_2,\\
\sup_{0\le t \le h} (\|u_{xxx}\|_{L^2}^2+\|u_{txx}\|_{L^2}^2 )+\int_0^h \|u_{xtt}\|_{L^2}^2 \dif t \le M_3\\
\left.(\rho, u, \theta, q)|_{t=0}=(\rho_0, u_0, \theta_0, q_0),\quad (u,q)|_{\partial \Omega}=0, \right\}
\end{align*}
where $M_2<M_1$, $M_3$ are positive constant specified in the course of the proof.

Now, we are ready to state the local well-posedness theorem.

\begin{theorem}\label{loc}
Assume $(\rho_0-1, \theta_0-1, q_0)\in H^{{3}}$,  $u_0\in H^4$ and the compatibility condition \eqref{compatibility condition} hold. Then,
there exists a positive constant $\epsilon_0$ and $t_0$ depending on $\epsilon_0$ such that when $M_0\le \epsilon_0$, problem  \eqref{1.5}-\eqref{1.6} admits a unique local solution
$(\rho, u, \theta, q)\in X_{t_0}(C_1 M_0, C_2M_0)$ where $C_1, C_2$ only depends on $\gamma$.
\end{theorem}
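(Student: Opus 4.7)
The plan is to construct the local solution via a linearized Picard-type iteration, in the spirit of the approach in Zheng's monograph cited in the paper. The structural observation is that the momentum equation $\eqref{1.5}_2$ is parabolic in $u$, whereas the mass, energy, and Cattaneo equations $\eqref{1.5}_1$, $\eqref{1.5}_3$, $\eqref{1.5}_4$ are first-order transport-type equations for $\rho$, $\theta$, and $q$, respectively. This splits the problem naturally into a parabolic sub-problem for $u$ and three transport-type sub-problems which, once the coefficients are frozen at a previous iterate, decouple and can be solved sequentially. Starting from the trivial time-extension $(\rho^{(0)}, u^{(0)}, \theta^{(0)}, q^{(0)}) \equiv (\rho_0, u_0, \theta_0, q_0)$, I would construct the $n$-th iterate by: (i) solving the linear transport equation for $\rho^{(n)}$ with velocity $u^{(n-1)}$ via the method of characteristics; (ii) solving the linear, uniformly parabolic Dirichlet problem for $u^{(n)}$, with elliptic coefficient $(\tfrac{4}{3}\mu h+\lambda l)(\theta^{(n-1)})$ and lower-order terms determined by the previous iterate, via standard parabolic theory; (iii) solving the linear transport equation for $\theta^{(n)}$ from $\eqref{1.5}_3$; and (iv) solving the linear Cattaneo equation for $q^{(n)}$ along the characteristics of $u^{(n-1)}$.

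Next I would close the iteration in two steps. First, uniform a priori energy estimates in the high norm defining $X_h$. Energy methods applied to each linearized equation (multiplying by appropriate test functions, differentiating up to two times in $t$ or $r$, and using the positivity bound $\gamma$) yield
\[
\sup_{0\le t\le h} \sum_{k=0}^{2} \|D^k(\rho^{(n)}-1, u^{(n)}, \theta^{(n)}-1, q^{(n)})\|_{L^2}^2 \le C_1 M_0,
\]
together with analogous bounds for $u^{(n)}_{xxx}, u^{(n)}_{txx}$ and $\int_0^h \|u^{(n)}_{xtt}\|_{L^2}^2\,\mathrm{d}t$ by $C_2 M_0$, provided $h$ is small depending only on $M_0$ and $\gamma$; this forces the iterate to stay in $X_h(C_1 M_0, C_2 M_0)$ and to maintain the positivity lower bound. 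Second, passing to the differences $\delta f^{(n)} := f^{(n)} - f^{(n-1)}$, which satisfy a linearization with zero initial data, and closing an energy estimate in a weaker norm (essentially $L^\infty_t L^2$ with the $r$-weight natural to the problem) yields a contraction
\[
\sup_{[0,h]} \|\delta(\rho^{(n)}, u^{(n)}, \theta^{(n)}, q^{(n)})\|_{L^2}^2 \le \tfrac{1}{2}\sup_{[0,h]} \|\delta(\rho^{(n-1)}, u^{(n-1)}, \theta^{(n-1)}, q^{(n-1)})\|_{L^2}^2
\]
for $h$ small. Interpolation between this weak convergence and the uniform high-norm bound places the limit in $X_{t_0}(C_1 M_0, C_2 M_0)$, and uniqueness follows from the same difference estimate applied to two candidate solutions.

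The main obstacle is closing the estimates involving second-order time derivatives, in particular for $u_{tt}$. Differentiating the momentum equation twice in time produces source terms like $\rho_{tt}u$ and $(\rho\theta)_{rtt}$ whose natural regularity is only $L^2$; keeping $u_{tt}\in L^\infty_t L^2 \cap L^2_t H^1$ and $u_{txx}\in L^\infty_t L^2$ therefore requires expressing time derivatives of $\rho, \theta, q$ through $\eqref{1.5}_1, \eqref{1.5}_3, \eqref{1.5}_4$ in terms of spatial derivatives and exploiting the parabolic structure carefully. For this the compatibility conditions \eqref{compatibility condition} are essential: they ensure $u_{tt}|_{t=0}$ and the boundary traces of $q_t$ are well defined in $L^2$. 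A secondary point is that the Cattaneo equation propagates the boundary condition $q|_{\partial\Omega}=0$ only if $\kappa(\theta)\theta_r|_{\partial\Omega}=0$ at $t=0$, which is again implicit in \eqref{compatibility condition}. Finally, the singular factors $2/r$ coming from the spherical reduction are smooth and bounded since $r\ge 1$ on $\Omega$, so the $r$-weighted integrations by parts proceed without further complications.
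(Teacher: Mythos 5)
Your overall architecture (linearized iteration, invariance of a ball in the high norm, contraction in a low norm, interpolation, uniqueness from the difference estimate) matches the paper's Steps 2--4. The genuine gap is in your steps (iii)--(iv): you decouple the energy equation and the Cattaneo equation into two scalar transport equations solved sequentially, with the coupling terms $q_r$ in $\eqref{1.5}_3$ and $\kappa(\theta)\theta_r$ in $\eqref{1.5}_4$ frozen at a previous iterate. These terms belong to the principal part, not to the lower-order source: a scalar transport equation has no smoothing, so producing $\theta^{(n)}\in C([0,h],H^2)$ from a right-hand side containing $q^{(n-1)}_r$ requires $q^{(n-1)}\in H^3$, one derivative more than the space $X_h$ controls. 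The iteration therefore loses a derivative at each step and cannot be closed in a fixed Sobolev norm. A second, related failure occurs at the boundary: if $\eqref{1.5}_4$ is read as a transport equation for $q$ along the characteristics of $\tilde u$, those characteristics are tangent to $\partial\Omega$ (since $\tilde u|_{\partial\Omega}=0$), so $q|_{\partial\Omega}=0$ is not a boundary condition one may impose but an identity that must be propagated along the boundary; it is preserved only if $\kappa(\tilde\theta)\tilde\theta_r|_{\partial\Omega}=0$ for \emph{all} $t\in[0,h]$, which \eqref{compatibility condition} guarantees for the initial data but not for a general iterate $\tilde\theta$.

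The paper avoids both problems by keeping $(\theta,q)$ together as a $2\times 2$ linear \emph{symmetric hyperbolic} system \eqref{loc1} (after dividing the Cattaneo equation by $\kappa(\tilde\theta)$): the coupling terms $q_r$ and $\theta_r$ then sit as off-diagonal entries of the symmetric matrix $A^1$, the boundary is shown to be non-characteristic because $\det\left((A^0)^{-1}A^1\right)\big|_{\partial\Omega}\neq 0$, and $q|_{\partial\Omega}=0$ is verified to be a maximally nonnegative (Friedrichs-admissible) boundary condition, so the Rauch--Massey theory yields $(\theta-1,q)\in\bigcap_{k=0}^{2}C^k([0,h],H^{2-k})$ with no loss of derivatives. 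This coupled hyperbolic sub-problem, together with the verification of the admissibility of the boundary condition, is the key structural ingredient your proposal is missing; the remaining components of your argument (characteristics for $\rho$, parabolic theory for $u$, the choice of $M_2$, $M_3$, $h$, and the contraction) are in line with the paper.
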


 \begin{proof}
We present a concise proof of this theorem, though detailed proofs for similar systems can be found in \cite{Zheng} and references therein. Specifically, we divide the proof into four steps.

 Step 1: For any $(\tilde \rho, \tilde u, \tilde \theta, \tilde q)\in X_h(M_2, M_3)$, consider the auxiliary linear problem
 \begin{align}\label{loc1}
 \begin{cases}
 \tilde \rho \tilde e_{\theta} \theta_t +(\tilde \rho \tilde u \tilde e_{\theta}-\frac{2 a(\tilde \theta)}{Z(\tilde \theta)} \tilde q) \theta_r+q_r=f_1,\\
 \tau(\tilde \theta)\tilde \rho (q_t+\tilde u q_r)+\kappa(\tilde \theta) \theta_r=f_2,\\
 (\theta, q)|_{t=0}=(\theta_0, q_0),\quad q|_{\partial \Omega}=0,
 \end{cases}
 \end{align}

 \begin{align}\label{loc2}
 \begin{cases}
  \tilde \rho u_t-(\frac{4}{3} \mu(\tilde \theta)+\lambda(\tilde \theta))u_{rr}=f_3,\\
  u|_{t=0}=u_0, u|_{\partial \Omega}=0,
 \end{cases}
 \end{align}

 and
 \begin{align}\label{loc3}
 \begin{cases}
  \rho_t+\tilde u \rho_r=f_4,\\
  \rho|_{t=0}=\rho_0,
  \end{cases}
  \end{align}
 where
 \begin{align*}
 \begin{cases}
 f_1=\left(\frac{2}{\tau(\tilde\theta)}+\frac{4}{r}\tilde u\right) a(\tilde\theta) \tilde q^2 + \mu(\tilde \theta)\left( 2\tilde u_r^2+\frac{4}{r^2}\tilde u^2 - \frac{2}{3} \left(\tilde u_r+\frac{2}{r} \tilde u\right)^2\right) \\
 \qquad\qquad\qquad\qquad\qquad\qquad\qquad\quad \qquad+ \lambda(\tilde \theta)\left(\tilde u_r+\frac{2}{r} \tilde u\right)^2-(\tilde u_r+\frac{2}{r} \tilde u) \tilde p-\frac{2}{r} \tilde q,\\
f_2=-\tau(\tilde \theta) \tilde \rho \cdot \frac{2}{r} \tilde u \tilde q-\tilde q, \\
f_3= \left(\frac{4}{3} \mu(\tilde \theta)+\lambda(\tilde \theta)\right)\left(\frac{2}{r}\tilde u_r-\frac{2}{r^2} \tilde u\right) + \lambda^\prime(\tilde \theta)\tilde \theta_r\left(\tilde u_r+\frac{2}{r}\tilde u\right) + \frac{4}{3}\mu^\prime(\tilde \theta)\tilde  \theta_r\left(\tilde u_r-\frac{\tilde u}{r}\right)\\
\qquad\qquad\qquad\qquad\qquad\qquad\qquad\qquad\qquad\qquad\qquad\qquad-R \tilde \rho \tilde \theta_r-R\tilde \theta \tilde \rho_r - \tilde \rho \tilde u  \tilde u_r,\\
f_4= -\frac{2}{r} \tilde \rho \tilde u-\tilde \rho \tilde u_r.\\
\end{cases}
 \end{align*}

 We deal with the system \eqref{loc1}, \eqref{loc2}, \eqref{loc3}, respectively. First, problem \eqref{loc1} is a linear symmetric hyperbolic system of first order (by dividing $\eqref{loc1}_2$ by $\kappa(\tilde \theta)$).  We first check that the boundary condition $q|_{\partial\Omega}=0$ is non-characteristic and satisfy the maximal nonnegative condition (admissible in the sense of Friedrich, see \cite{Zheng, Friedrichs}). Indeed,  we rewrite system \eqref{loc1} with $U=(\theta, q)$ as
 \begin{align}
 A^0 U_t+A^1 U_r=F,
 \end{align}
 where
 \[
 A^0=
 \begin{pmatrix}
 \tilde \rho \tilde e_\theta &0\\
 0& \frac{\tau(\tilde \theta) \tilde \rho}{\kappa (\tilde \theta)}
 \end{pmatrix},
 A^1=
 \begin{pmatrix}
 \left(\tilde \rho \tilde u \tilde e_\theta-\frac{2a(\tilde \theta)}{Z(\tilde \theta)} \tilde q\right) & 1\\
 1&  \frac{\tau (\tilde \theta) \tilde \rho \tilde u}{\kappa (\tilde \theta)}
 \end{pmatrix}, F=\mathrm{diag }\{f_1, f_2\}.
 \]
 The boundary condition reduces to
 \[
 P U|_{\partial \Omega}=0,\,  \text{with} \,  P=
 \begin{pmatrix}
 0&0\\
 0&1
 \end{pmatrix}.
 \]
 Note that by virtue of $(\tilde u, \tilde q)|_{\partial \Omega}=0$, we have $\det \left( (A^0)^{-1} A^1\right)|_{\partial \Omega} \neq 0$. Consequently, the boundary condition $q|_{\partial \Omega}=0$ is non-characteristic. Now, we show the boundary condition is maximally nonnegative, i.e., the matrix $A^1 \cdot \nu|_{\partial\Omega}$ is positive semidefinite on the null space $K$ of $P$ but not on any space containing $K$. Here $\nu=-1$. Let $\xi=(\xi_1,0)^T \in K = \mathrm{span} \{(1,0)^T\}$, then,
\begin{align*}
\xi^TA^1\cdot\nu|_{\partial\Omega}\xi=0.
\end{align*}
On the other hand, $\mathbb R^2$ is the only space containing $K$ as a proper subspace.  Take $\psi=(1,1)^T$, we get
\begin{align*}
\psi ^TA^1\cdot\nu|_{\partial\Omega}\psi=-2<0.
\end{align*}
Thus, the maximally nonnegative property is satisfied.  Furthermore, since the coefficients belong to $\bigcap\limits_{k=0}^2 C^k ([0,h], H^{2-k})$ and $D^2 f_1 \in L^2([0,h], L^2)$ and the compatibility condition satisfied up to first-order, by Theorem 1.3.10 in \cite{Zheng} (pp. 18-19) (originally coming from \cite{RM74}) , problem \eqref{loc1} admits a unique solution $(\theta-1, q)\in \bigcap\limits _{k=0}^2C^k([0,h], H^{2-k})$ satisfying
 \begin{align*}
& \SUM k 0 2 \|D^k(\theta-1, q)\|_{L^2}^2 \\
 &\le C \left( \SUM k 0 2 \|D^k (\theta-1, q)|_{t=0}\|_{L^2}^2+\SUM k 0 1 \|D_t^k f_1|_{t=0}\|_{H^{1-k}}^2 + t \int_0^t \SUM l  0 2 \|D^l f_1\|_{L^2}^2 \dif t \right) e^{C M_2 t},
 \end{align*}
 where $C$ is a positive constant depending on $\gamma$.

 Concerning the initial-boundary value problem for linear parabolic system \eqref{loc2}, we deduce from energy methods that the unique solution $u$ satisfies
 \begin{align*}
 \SUM k 0 2  \|D^k  u\|_{L^2}^2+\int_0^t \|(u_x, u_{xx}, u_{tx}, u_{ttx}, u_{txx})\|_{L^2}^2 \dif t
 \le C e^{C M_2 t} \left\{ \SUM k 0 2 \|D^k  u|_{t=0}\|_{L^2} +M_2 t \right\},\\
 \|u_{xxx}\|_{L^2}^2+\| u_{txx}\|_{L^2}^2\le C(M_2)(1+ \SUM k 1 2 \|D^k u\|_{L^2}^2 ),
 \end{align*}
 where $C(M_2)$ is a positive constant depending on $M_2$.

 Finally, for the transport equation $\eqref{loc3}_1$, we define the characteristic line
 \[
 \frac{d X(t,r)}{d t}= \tilde u(t, X(t,r)), \quad X(0,r)=r.
 \]
 Then, we can derive an explicit  solution for \eqref{loc3},
 \begin{align}
 \rho(t,r)=\rho_0(r_0)+\int_0^t f_4(s, X(s, r_0)) \dif s ,
 \end{align}
 where $r_0$ denotes the unique point on the $r$-axis ($t=0$) such that the characteristic line passing through $(0, r_0)$ and $(t, r)$. Moreover,
 from usual energy estimates, we can easily get
  \begin{align*}
& \SUM k 0 2 \|D^k(\rho-1)\|_{L^2}^2 \\
 &\le C \left( \SUM k 0 2 \|D^k (\rho-1)|_{t=0}\|_{L^2}^2+\SUM k 0 1 \|D_t^k f_4|_{t=0}\|_{H^{1-k}}^2 + t \int_0^t \SUM l  0 2 \|D^l f_4\|_{L^2}^2 \dif t \right) e^{C M_2 t}.
 \end{align*}

Step 2: We first choose
 \begin{align*}
 M_2
 &=2C \left( \SUM k 0 2 \|D^k (\rho-1, \theta-1, q)|_{t=0}\|_{L^2}^2+\SUM k 0 1 \|D_t^k (f_1, f_4)|_{t=0}\|_{H^{1-k}}^2+\SUM k 0 2 \|D^k  u|_{t=0}\|_{L^2}  \right) \\
 &=:C_1 M_0.
 \end{align*}
 Then, choose
 \begin{align*}
 M_3=\max \{M_2,  C(M_2)(1+M_2)\}=:C_2 M_0.
 \end{align*}
 Finally, choose $h$ depending on $M_0$ such that for $t\le h$,
 \begin{align*}
 C  t  e^{C M_2 t}  \int_0^t \SUM l  0 2 \|D^l (f_1, f_4)\|_{L^2}^2 \dif t  \le \frac{M_2}{2}, \quad
 e^{C M_2 t}  \le \frac{5}{4},  \quad
 C e^{C M_2 t}  t \le \frac{1}{4}.
 \end{align*}
 Therefore, the linear operator defined by \eqref{loc1}-\eqref{loc3} maps $X_h(M_2, M_3)$ into itself.

Step 3: Following the above two steps, we can get an iterative sequence $(\rho_n, u_n, \theta_n, q_n)\in X_h(M_2, M_3)$. By usual compactness
 argument, the sequence$(\rho_n-1, \theta_n-1, q_n)$ converge in $C([0,h], H^1)\cap C^1([0,h], L^2)$ and $u_n$ converge in $C([0,h], H^2)\cap C^1([0,h],L^2)\cap L^2([0,h], H^3)$ and the limit
 function $(\rho, u, \theta, q)$ satisfy the system \eqref{1.5}-\eqref{1.6}. Moreover, the function $(\rho, u, \theta, q)$ belongs to the class: for $0\le k\le 2$,
 \[
 D^k(\rho-1, u, \theta-1, q)\in L^\infty((0,h), L^2),\,  u_{xxx}, u_{txx}\in L^\infty((0,h), L^2), u_{xtt}\in L^2((0,h), L^2)
 \]
 and is the unique pair of solutions for problem \eqref{1.5}-\eqref{1.6} in this class.

Step 4:  Substituting this $(\rho, u, \theta, q)$ into the coefficients in \eqref{1.5}, the reduced linearized problems admits a unique pair of solutions in
 $X_h(M_2,M_3)$. By uniqueness, $(\rho, u, \theta, q) \in X_h(M_2,M_3)$.

  Therefore, by choosing $M_0$ sufficiently small so that $M_2:=C_1 M_0 \le M_1$, the proof of the theorem is complete.

 \end{proof}

 \section{A priori estimates}

In order to extend the local solution obtained in Theorem \ref{loc} to a global solution, we need the following a priori estimates.
\begin{proposition}\label{pro}
Let Assumption \ref{ass} hold and $(\rho, u, \theta, q)$ be the local solution to system $\eqref{1.5}-\eqref{1.6}$ given in Theorem \ref{loc}. Then, there exists a $\delta>0$ such that if $E(t)<\delta$,
\begin{align}\label{hu3.1}
E(t)+\int_0^t \mathcal D(s)\dif s \le C E(0),
\end{align}
where $C$ is a constant independent of $\tau, \lambda, \mu$.
\end{proposition}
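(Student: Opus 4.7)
The plan is to argue by a bootstrap/continuity argument. Assuming \emph{a priori} that $E(t)<\delta$ on some interval $[0,T]$ with $\delta$ sufficiently small, I derive the improved estimate $E(t)+\int_0^t \mathcal{D}(s)\,\dif s\le CE(0)$, with $C$ independent of $\tau,\mu,\lambda$. The smallness of $\delta$, combined with one-dimensional Sobolev embeddings on the $r$-line, gives pointwise control of $(\rho-1,u,\theta-1,q)$ and their first $r$-derivatives, so that $\rho$ and $\theta$ remain bounded below away from zero and the coefficients $g(\theta),h(\theta),l(\theta),a(\theta),Z(\theta),\kappa(\theta)$ stay in a fixed positive range. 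Since $r\ge 1$ on $\Omega$, the geometric factors $2/r$ and $2u/r$ are harmless and produce no singular behaviour at the origin.

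The estimates are then built in increasing order of regularity. The lowest-order step is a physical energy identity: I test the mass, momentum and energy equations of \eqref{1.5} against $r^2$ times $1$, $u$ and $1-1/\theta$ respectively, and, following the thermodynamic modification of Coleman \emph{et al.}, multiply $\eqref{1.5}_4$ by $r^2 a(\theta) q/\rho$. After integration by parts, the pressure and flux cross-terms cancel and one obtains the leading $L^2$-portion of $E$ together with dissipation of $\|rq\|_{L^2}$, the viscous dissipation $\|ru_r\|_{L^2}+\|u\|_{L^2}$, and, through the coupling between the energy equation and the algebraic part of \eqref{1.1b}, of $\|r\theta_r\|_{L^2}$. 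The $\sqrt{\tau}$-weight on $q$ in $E$ comes directly from the $\tau\rho q_t$ term on the left of the Cattaneo-Christov equation.

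Next I differentiate the system successively in $r$ and in $t$ and repeat the same multiplier scheme on the differentiated equations. The Dirichlet conditions $u|_{\partial\Omega}=q|_{\partial\Omega}=0$ kill all boundary contributions; commutators with the derivatives produce quadratic remainders of the schematic form $\sqrt{E}\,\mathcal{D}$, which are absorbed when $\delta$ is small. The powers of $\tau$ in front of $q_t,q_{tt},u_{tt}$ in $E$ and $\mathcal{D}$ arise because each time-differentiation of $\eqref{1.5}_4$ reintroduces a factor of $\tau\rho$; the entropy-type multiplier then delivers precisely the weights listed. The second-order spatial bounds are then recovered algebraically: viewing $\eqref{1.5}_2$ as an elliptic relation yields $\left(\tfrac{4}{3}\mu+\lambda\right)\|ru_{rr}\|_{L^2}$ and, after one more differentiation, $\left(\tfrac{4}{3}\mu+\lambda\right)\|ru_{rrr}\|_{L^2}$ in terms of $\rho u_t$, $\rho u u_r$, $p_r$ and their $r$-derivatives, while solving $\eqref{1.5}_4$ for $\kappa\theta_r$ expresses $\|r\theta_{rr}\|_{L^2}$ through $q_r,q_{rr},q_t,q_{tr}$ already controlled. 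The density bounds follow from the transport equation $\eqref{1.5}_1$.

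The main obstacle is producing estimates that stay finite as $\tau\to 0$ \emph{and} as $\mu,\lambda\to 0$ simultaneously. For the relaxation limit one must never divide by $\tau$: every time-derivative estimate on $q$ must carry the right power of $\tau$, which is precisely the bookkeeping forced by the $\tau\rho q_t$ structure and is encoded in the weights of $E,\mathcal{D}$; the compatibility condition \eqref{compatibility} guarantees that the initial data remain consistent with the algebraic limit $q=-\kappa\theta_r$. For the vanishing-viscosity limit one must never divide by $\mu+\lambda$: the highest $u$-derivatives appear in $E$ and $\mathcal{D}$ weighted by powers of $\tfrac{4}{3}\mu+\lambda$, and this weighting is preserved throughout. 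Collecting all stages, the nonlinear error reduces to terms of the form $\sqrt{E(t)}\,\mathcal{D}(t)+E(t)\mathcal{D}(t)$, so that choosing $\delta$ small enough allows absorption into the left-hand side and a Gronwall step closes the argument.
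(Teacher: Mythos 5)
Your overall architecture (bootstrap on $E(t)<\delta$, entropy-type zeroth-order identity, repeated $\partial_t$ and $\partial_r$ differentiation with the same multipliers, algebraic recovery of $u_{rr}$, $\theta_{rr}$, $\rho_{rr}$ from the equations, absorption of $\sqrt{E}\,\mathcal D$ errors) matches the paper's Lemmas 3.2--3.10. However, there is one genuine gap: you attribute the time-integrated control of $\|r u_r\|_{L^2}^2+\|u\|_{L^2}^2$ in $\mathcal D$ to ``the viscous dissipation'' produced by the basic energy identity. That dissipation carries the weight $\tfrac{4}{3}\mu+\lambda$, so it degenerates exactly in the vanishing-viscosity regime in which the estimate must remain uniform; it cannot supply the \emph{unweighted} terms $\|rD(\rho,u,\theta,q)\|_{L^2}^2+\|(u,u_r,u_t,q_r)\|_{L^2}^2$ that appear in $\mathcal D$. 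The paper's Lemma \ref{le5} obtains this dissipation from a different source: it multiplies the energy equation $\eqref{1.5}_3$ by $\frac{1}{\rho e_\theta}(r^2u)_r$, so that the pressure-work term $p\left(u_r+\frac{2}{r}u\right)$ produces $\int r^2u_r^2+2u^2$ with a constant $R/C_v$ independent of $\mu,\lambda$; from there $\rho_t$ (mass equation), $u_t$ (momentum equation tested with $r^2u_t$), $\rho_r$ (momentum equation tested with $r^2\rho_r$, using the mass equation to convert the viscous term into a total time derivative), and $\theta_t$ follow in sequence, and Lemma \ref{le9} repeats the scheme at second order. Without this step the claimed bound on $\int_0^t\mathcal D(s)\,\dif s$ cannot be closed.

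A smaller inaccuracy: you place the dissipation of $\|r\theta_r\|_{L^2}$ at the lowest-order stage. In the paper it is recovered only after the first-order time and space estimates, by solving $\eqref{1.5}_4$ for $\kappa(\theta)\theta_r=-q-\tau(\theta)\rho\bigl(q_t+uq_r+\tfrac{2}{r}uq\bigr)$ and using the already-established dissipation of $rq$, $\tau rq_t$, $\tau rq_r$ (the $\tau$-weights are exactly what makes this uniform as $\tau\to0$). The basic energy identity itself only dissipates $\|rq\|_{L^2}^2$. These orderings matter because each lemma's error terms must be controlled by quantities already estimated; as written, your lowest-order step claims more than it can deliver.
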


%
%

Without loss of generality, we assume $g(1)=\kappa(1)=h(1)=l(1)=1$, $\tau \le 1$.  Moreover, there exists  $\delta_1$ such that if $E(t)\le \delta_1$, one has
\begin{align}
&\frac{3}{4}\le \rho(t,x),\theta(t,x)\le\frac{5}{4}, \label{new1}\\
&2C_v\ge e_{\theta}\ge \frac{C_v}{2},\, 2\tau \ge Z(\theta)=\frac{\tau   (\theta)}{\kappa(\theta)}\ge \frac{\tau   }{2}, |e_{\theta\theta}|+|e_{\theta q}|+|e_{\theta \theta\theta}|+
|e_{\theta \theta q}|+|e_{\theta q q}| \le C, \label{new2}\\
&a(\theta)+\left(\frac{Z(\theta)}{2\theta}\right)^\prime=\frac{1}{\theta}(1-\frac{1}{2\theta}) Z(\theta)+\frac{1}{2}(\frac{1}{\theta}-1) Z^\prime(\theta) \ge \frac{1}{4} \tau ,   \label{new3}
\end{align}
where $ C$ denotes a universal constant which is independent of $\tau  $, $\mu$ and $\lambda$. Note that \eqref{new3} can be satisfied by choosing $\theta$ sufficiently close to $1$.

The proof of Proposition \ref{pro} is divided into the following Lemmas.
\begin{lemma}\label{le1}
There exists some constant $C$ such that
\begin{align} \label{1.7}
\int_{\Omega}r^2\left((\rho-1)^2+u^2+ (\theta-1)^2+\tau   q^2\right)\dif r+\int_{0}^{t}\int_{\Omega} r^2 q^2 \dif r \dif s
\le CE_0.
\end{align}
\end{lemma}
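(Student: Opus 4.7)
The plan is to derive a basic $L^2$-energy identity by testing the four equations of \eqref{1.5} against carefully chosen multipliers and adding. Concretely, I would work with the relative-energy functional
\begin{align*}
\mathcal{L}(t)=\int_{\Omega} r^2 \left[ \tfrac{1}{2}\rho u^2 + \rho\Psi(\rho,\theta) + \rho\,\frac{Z(\theta)}{2\theta}\, q^2 \right] \dif r,
\end{align*}
where $\Psi(\rho,\theta) = C_v(\theta - 1 - \ln\theta) + R\theta(\rho^{-1} - 1 + \ln\rho)$ is the Helmholtz-type relative density used in compressible Navier--Stokes--Fourier theory, augmented by the $q^2$-piece whose coefficient is precisely the one appearing in \eqref{new3}. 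By \eqref{new1}--\eqref{new2}, and since $\theta,\rho$ are close to $1$, $\mathcal{L}(t)\simeq \int_\Omega r^2\bigl((\rho-1)^2 + u^2 + (\theta-1)^2 + \tau q^2\bigr)\dif r$, and $\mathcal{L}(0)\lesssim E(0)$.

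I would then compute $\tfrac{d}{dt}\mathcal{L}(t)$ by multiplying the momentum equation $\eqref{1.5}_2$ by $r^2 u$, the energy equation $\eqref{1.5}_3$ by $r^2(1-\theta^{-1})$, the Cattaneo--Christov equation $\eqref{1.5}_4$ by $r^2 q/(\kappa(\theta)\theta)$, and using the mass equation $\eqref{1.5}_1$ to handle the $\partial_t \rho$ contributions arising from the density in $\mathcal{L}$. The boundary terms vanish thanks to $u|_{\partial\Omega}=q|_{\partial\Omega}=0$, and the usual cancellation $-\int r^2 p(u_r + 2u/r)\dif r$ against the pressure-work term holds after integrating by parts with the $r^2$-weight (so that $(r^2 u)_r = r^2(u_r + 2u/r)$, and likewise for $q$). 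The stress-tensor contribution produces the standard nonnegative parabolic quadratic form (scaled by $\mu,\lambda$), which can be dropped on the right-hand side. The crucial cross-term is $\int r^2 q\theta_r/\theta^2 \dif r$, which arises both from the heat-flux term $q_r + 2q/r$ tested against $\theta^{-1}$ and from the Cattaneo multiplier; these two contributions are designed to cancel.

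The key algebraic step is the coefficient of $q^2\theta_t$ produced when $\partial_t$ hits $Z(\theta)/(2\theta)$ inside $\mathcal{L}$, combined with the $(2/\tau(\theta)) a(\theta) q^2$ source on the right-hand side of $\eqref{1.5}_3$ tested against $(1-\theta^{-1})$: the resulting coefficient is exactly $a(\theta) + \bigl(Z(\theta)/(2\theta)\bigr)'$, which is bounded below by $\tau/4$ by \eqref{new3}. Together with $\int r^2 q^2/(\kappa(\theta)\theta)\dif r$ from the Cattaneo multiplier, this yields the dissipation $c\int r^2 q^2 \dif r$ on the left. All remaining terms are either cubic (or higher) in the perturbations $(\rho-1, u, \theta-1, q)$ — e.g., $\rho u u_r q$, $q^2 u_r$, $(\rho-1)\theta_r u$, and the curvature contributions $(4u/r)a(\theta)q^2$ — or contain explicit $\tau$-powers that combine with the $q$-quadratic sources; all of these are absorbable by $\sqrt{E(t)}\,\mathcal{D}(t)$ using the smallness $E(t)<\delta$, the Sobolev embeddings (with the spherical weight $r^2$), and the uniform bounds \eqref{new1}. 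Integrating the resulting differential inequality $\tfrac{d}{dt}\mathcal{L}(t) + c\int r^2 q^2 \dif r \le C\sqrt{\delta}\,\mathcal{D}(t)$ over $[0,t]$ then yields \eqref{1.7}.

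The main obstacle will be the careful bookkeeping of the $q\theta_r$ cross-term cancellation — it is the mechanism that makes the hyperbolic heat-flux equation dissipate in $L^2$ and is sensitive to the precise choice of $Z(\theta)/(2\theta)$ as the multiplier — together with the handling of the curvature terms $2u/r$, $2q/r$ arising from spherical symmetry; committing to the $r^2\dif r$ weight from the outset and noting $(r^2 f)_r = r^2 f_r + 2rf$ keeps the calculation transparent.
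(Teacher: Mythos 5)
Your strategy is in essence the paper's: both arguments rest on the relative entropy (total energy minus the reference temperature times entropy), augmented by a $q^{2}$--term with a $Z(\theta)/\theta$--type weight, and both exploit exactly the cancellation you identify between the $q\theta_{r}/\theta^{2}$ contribution from testing $q_{r}+\tfrac{2}{r}q$ against $\theta^{-1}$ and the one produced by substituting the Cattaneo--Christov equation. The paper's version of \eqref{new3} is indeed the positivity input, but its role is to bound \emph{from below} the coefficient $a(\theta)-A(\theta)=a(\theta)+\bigl(Z(\theta)/(2\theta)\bigr)^{\prime}$ of $q^{2}$ in the energy functional by $\tau/4$ (so the functional controls $\tau q^{2}$), not to control a coefficient of $q^{2}\theta_{t}$ as you describe; this misattribution suggests the computation has not actually been carried through.

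The substantive discrepancy is in the conclusion. The paper chooses the $q^{2}$--weight to be precisely $a(\theta)-A(\theta)$ with $A(\theta)=-\bigl(Z(\theta)/(2\theta)\bigr)^{\prime}$, and this choice makes the whole computation close into an \emph{exact} divergence identity \eqref{new4}: a time derivative plus a spatial flux plus manifestly nonnegative dissipation equals zero, with no remainder whatsoever (smallness is used only to keep $r^{2}+2\tau(\theta)\rho u r\ge \tfrac12 r^{2}$ and the Taylor equivalences). That is why \eqref{1.7} has only $CE_0$ on the right. Your functional, with weight $Z(\theta)/(2\theta)$ and with $\Psi$ carrying an extra factor $\theta$ in the density part, agrees with the paper's only to quadratic order, so your derivation necessarily leaves cubic remainders and ends with $\tfrac{\dif}{\dif t}\mathcal{L}+c\int_{\Omega} r^{2}q^{2}\,\dif r\le C\sqrt{\delta}\,\mathcal{D}(t)$. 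Integrating this gives $\mathcal{L}(t)+c\int_{0}^{t}\!\int_{\Omega}r^{2}q^{2}\le \mathcal{L}(0)+C\sqrt{\delta}\int_{0}^{t}\mathcal{D}(s)\,\dif s$, which is \emph{not} \eqref{1.7}: at this stage of the argument $\int_{0}^{t}\mathcal{D}$ is not yet known to be bounded by $E_0$ (that is the conclusion of Proposition \ref{pro}, obtained only after all the lemmas are summed). The weaker estimate $C\bigl(E_0+E^{\frac12}(t)\int_{0}^{t}\mathcal{D}(s)\,\dif s\bigr)$ would in fact still feed correctly into the bootstrap closing Proposition \ref{pro}, since every subsequent lemma carries the same error term, so your route is salvageable for the global result; but as a proof of the lemma as stated it falls short, and you should either adopt the paper's exact weight $a(\theta)-A(\theta)$ to eliminate the remainder, or restate the lemma with the $E^{\frac12}\int_0^t\mathcal{D}$ term.
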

\begin{proof}
From $\eqref{1.5}_{3, 4}$, we get
\begin{align}\label{1.8}
\rho e_t+\rho u e_r+p(u_r+\frac{2}{r} u)+q_r+\frac{2}{r} q=\mu(\theta)\left( 2u_r^2+\frac{4}{r^2}u^2-\frac{2}{3} \left(u_r+\frac{2}{r} u\right)^2\right) \nonumber
\\+\lambda(\theta)\left(u_r+\frac{2}{r} u\right)^2. \qquad\qquad
\end{align}
Dividing the above equation by $\theta$, and using formula \eqref{1.3}, one has
\begin{align*}
\frac{\rho}{\theta}(C_v\theta+a(\theta)q^2)_t+&\frac{\rho u}{\theta} (C_v \theta+a(\theta) q^2)_r+R \rho (u_r+\frac{2}{r} u)+\frac{1}{\theta}(q_r+\frac{2}{r}q)=\\
& \frac{\mu(\theta)}{\theta} \left( 2u_r^2+\frac{4}{r^2}u^2-\frac{2}{3} \left(u_r+\frac{2}{r} u\right)^2\right)+\frac{\lambda (\theta)}{\theta} \left(u_r+\frac{2}{r} u\right)^2.
\end{align*}
For the above equations, first, we have
\begin{align*}
\frac{1}{\theta}C_v\theta_t=C_v(\ln\theta)_t,
\end{align*}
and
\begin{align*}
&\frac{\rho}{\theta} (a(\theta)q^2)_t+\frac{\rho u}{\theta} (a(\theta) q^2)_r\\
&=\rho\left( \frac{a(\theta) }{\theta} q^2\right)_t+\rho \frac{\theta_t}{\theta^2} a(\theta) q^2 +\rho u \left( \frac{a(\theta)}{\theta} q^2 \right)_r+\rho u \frac{\theta_r}{\theta^2} a(\theta) q^2\\
&=\rho\left( \frac{a(\theta)}{\theta} q^2\right)_t-\rho \left( \frac{Z(\theta)}{2\theta^2}\right)_t q^2+\rho u \left( \frac{a(\theta)}{\theta} q^2 \right)_r-\rho u \left(\frac{Z(\theta)}{2\theta^2}\right)_r q^2\\
&=\rho \left( \frac{a(\theta)}{\theta} q^2 \right)_t-\rho \left(\frac{Z(\theta)}{2\theta^2}q^2\right)_t+\rho \frac{Z(\theta)}{\theta^2} \left(\frac{1}{2}q^2\right)_t\\
&\qquad\qquad\qquad+\rho u \left( \frac{a(\theta)}{\theta}q^2\right)_r-\rho u \left(\frac{Z(\theta)}{2\theta^2}q^2 \right)_r+\rho u \frac{Z(\theta)}{\theta^2} \left(\frac{1}{2} q^2\right)_r\\
&=\rho \left( \left(\frac{a(\theta)}{\theta}-\frac{Z(\theta)}{2\theta^2}\right) q^2\right)_t+\rho u \left(  \left(\frac{a(\theta)}{\theta}-\frac{Z(\theta)}{2\theta^2}\right) q^2\right)_r+\frac{Z(\theta)}{\theta^2} q (\rho q_t+\rho u q_r),
\end{align*}
where we have used the identity
$$
\left(\frac{Z(\theta)}{2\theta^2}\right)^\prime =-\frac{a(\theta)}{\theta^2}.
$$
Using the equation $\eqref{1.5}_4$, one has
\begin{align*}
\frac{Z(\theta)}{\theta^2} q (\rho q_t+\rho u q_r)
=\frac{1}{\kappa(\theta)\theta^2} \left( -\tau(\theta) \rho u \frac{2}{r}-1\right)q^2-\frac{\theta_r}{\theta^2} q.
\end{align*}
Thus, we derive that
{\small
\begin{align*}
&\rho( C_v \ln \theta +A(\theta) q^2 )_t+\rho u (C_v \ln \theta+A(\theta) q^2)_r+R \rho (u_r+\frac{2}{r} u)+\left(\frac{q}{\theta}\right)_r+\frac{2}{r} \frac{q}{\theta}\\
&=\frac{1}{\kappa(\theta) \theta^2} \left(1+\tau(\theta) \rho u \frac{2}{r}\right) q^2+ \frac{\mu(\theta)}{\theta} \left( 2u_r^2+\frac{4}{r^2}u^2-\frac{2}{3} \left(u_r+\frac{2}{r} u\right)^2\right)+ \frac{\lambda(\theta)}{\theta} \left(u_r+\frac{2}{r} u\right)^2,
\end{align*}
}
where
\[
A(\theta):=\frac{a(\theta)}{\theta}-\frac{Z(\theta)}{2\theta^2}=-\left(\frac{Z(\theta)}{2\theta}\right)^\prime.
\]
Multiplying the above equation by $r^2$, it yields{\small
\begin{align*}
&\left( r^2 \rho \left( C_v \ln \theta +A(\theta) q^2 \right)\right)_t+\left( r^2 \rho u \left( C_v \ln \theta +A(\theta) q^2 \right)\right)_r+R \rho (r^2 u)_r+\left( r^2 \frac{q}{\theta}\right)_r\\
&=\frac{1}{\kappa(\theta) \theta^2} \left(r^2+\tau(\theta) \rho u 2 r \right) q^2+ \frac{\mu(\theta)}{\theta} r^2 \left( 2u_r^2+\frac{4}{r^2}u^2-\frac{2}{3} \left(u_r+\frac{2}{r} u\right)^2\right)+ \frac{\lambda(\theta)}{\theta} r^2 \left(u_r+\frac{2}{r} u\right)^2.
\end{align*}}
Then, using the mass equation,
\[
(r^2 \rho)_t+(r^2 \rho u)_r=0,
\]
and the energy equation,
\begin{align*}
\left( r^2 \rho (C_v \theta+a(\theta) q^2+\frac{1}{2}u^2) \right)_t +\left( r^2 \rho u \left( C_v\theta+a(\theta) q^2+\frac{1}{2}u^2\right)+r^2(pu+q)-\right.\\
\left.r^2 u \left( \frac{4}{3} \mu(\theta) (u_r-\frac{u}{r})+\lambda(\theta)(u_r+\frac{2}{r} u) \right)\right)_r=0,
\end{align*}
we conclude that{\small
\begin{align}
\left( r^2 \rho  C_v(\theta-\ln \theta-1)+r^2 R(\rho \ln \rho-\rho+1)+r^2 \rho (a(\theta)-A(\theta) )q^2+\frac{1}{2}r^2 \rho  u^2 \right)_t \nonumber \\
+\left( r^2 \rho u \left( C_v(\theta- \ln \theta-1)+R\ln \rho-1+\frac{1}{2}u^2+(a(\theta)-A(\theta) )q^2\right)+r^2(pu+q)-r^2\frac{q}{\theta}-\right. \nonumber\\
\left.r^2 u \left( \frac{4}{3} \mu(\theta) (u_r-\frac{u}{r})+\lambda(\theta)(u_r+\frac{2}{r} u) \right)\right)_r
+\frac{1}{\kappa(\theta) \theta^2} \left(r^2+\tau(\theta) \rho u 2 r \right) q^2 \nonumber\\
+ \frac{\mu(\theta)}{\theta} r^2 \left( 2u_r^2+\frac{4}{r^2}u^2-\frac{2}{3} \left(u_r+\frac{2}{r} u\right)^2\right)+\frac{\lambda (\theta)}{\theta} r^2 \left(u_r+\frac{2}{r} u\right)^2 =0. \label{new4}
\end{align}}
A Taylor expansion together with \eqref{new1} imply
\begin{align}
C_0(\rho-1)^2 \le \rho \ln \rho -\rho+1 \le C_1 (\rho-1)^2,\\
C_0 (\theta-1)^2 \le \theta-\ln \theta-1 \le C_1 (\theta-1)^2,
\end{align}
where $C_0, C_1$ are two positive constants. Furthermore, since $|u|_{L^\infty} \le C E^\frac{1}{2}$, we can choose $\delta$  (in Proposition \ref{pro})
small enough such that
\[
r^2+\tau(\theta) \rho u 2 r \ge \frac{1}{2} r^2.
\]
Therefore, integrating the equation \eqref{new4} over  $\Omega \times (0,t)$ and noting \eqref{new3}, we get the desired result  \eqref{1.7} in Lemma \ref{le1}.
\end{proof}

\begin{lemma}\label{le2}
There exists some constant $C$ such that
\begin{align}\label{1.9}
 \int_{\Omega} \left( r^2 \left((u_t)^2+(\rho_t)^2+(\theta_t)^2+\tau (q_t)^2 \right)\right)  \dif r +\int_0^t \int_{\Omega} r^2 ((q_t)^2+(\frac{4}{3} \mu+\lambda) u_{tr}^2) \dif r \dif t \nonumber\\
 \le C(E_0+ E^\frac{1}{2}(t)\int_0^t  \mathcal D(s)\dif s).
\end{align}
\end{lemma}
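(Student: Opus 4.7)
The plan is to differentiate each of the four equations in \eqref{1.5} with respect to $t$, test the differentiated equations against $r^2\rho_t$, $r^2 u_t$, $r^2\theta_t$ and $r^2 q_t$ respectively, sum, and integrate in time. The boundary conditions $u_t|_{\partial\Omega}=q_t|_{\partial\Omega}=0$ inherited from \eqref{compatibility condition} permit free integration by parts in $r$, and the weight $r^2$ is the natural one coming from the spherical Jacobian on $\Omega$.

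On the left-hand side these four tests produce $\frac{1}{2}\tfrac{d}{dt}\int_\Omega r^2(\rho_t^2+\rho u_t^2+\rho e_\theta\theta_t^2+\tau\rho q_t^2)\,\dif r$ together with the dissipation $\int_\Omega r^2\bigl((\tfrac{4}{3}\mu+\lambda)u_{tr}^2+q_t^2\bigr)\,\dif r$. The viscous term is produced by integrating by parts the right-hand side of $\partial_t(\eqref{1.5}_2)$ against $r^2 u_t$; the relaxation dissipation $\int r^2 q_t^2$ is the direct contribution of the bare $q_t$ coming from $\partial_t(\eqref{1.5}_4)$ tested against $r^2 q_t$. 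The only principal coupling between the hyperbolic $(\theta,q)$ block arises from $\int_\Omega r^2 q_{tr}\theta_t\,\dif r$ (from the energy test) and $\int_\Omega r^2\kappa(\theta)\theta_{tr}q_t\,\dif r$ (from the relaxation test). Integrating the second by parts in $r$ using $q_t|_{\partial\Omega}=0$ and invoking $\kappa(1)=1$, these two contributions cancel at principal order, leaving only a cubic remainder of the form $\int_\Omega r^2(\kappa(\theta)-1)_r q_t\theta_t\,\dif r$ plus a geometric correction $\int_\Omega r\,\theta_t q_t\,\dif r$.

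Every other term generated during the manipulation is at least cubic, of the schematic form (small $L^\infty$ factor)$\times$(term already present in $\mathcal{D}(s)$). Since $r\ge 1$ on both admissible $\Omega$, the weight $r^2$ can be inserted at will, so Sobolev embedding on the radial profile yields $\|u\|_{L^\infty}+\|u_r\|_{L^\infty}+\|q\|_{L^\infty}+\|\theta_r\|_{L^\infty}\lesssim\sqrt{E(t)}$. Each remainder is then pointwise bounded by $\sqrt{E(t)}\,\mathcal{D}(s)$. Integrating in time from $0$ to $t$ and controlling the initial time derivatives by $E_0$ (obtained by solving for $\rho_t,u_t,\theta_t,q_t|_{t=0}$ from the equations evaluated at $t=0$ and invoking \eqref{compatibility}) produces exactly the right-hand side of \eqref{1.9}.

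The hard part will be keeping the whole argument uniform in $\tau,\mu,\lambda$ as they tend to zero. Because the $q_t$ contribution to $E(t)$ carries the weight $\tau$ while its dissipation $\int r^2 q_t^2$ carries none, no factor $\tau^{-1}$ may be generated at any step; in particular the $\theta_{tr}q_t$ coupling cannot be absorbed by a Young inequality and must be handled by the exact integration-by-parts cancellation with $q_{tr}\theta_t$ described above. Similarly, because $(\tfrac{4}{3}\mu+\lambda)u_{tr}^2$ degenerates in the inviscid limit, the viscous dissipation can only serve as a by-product and cannot be used to absorb other nonlinear terms. Careful bookkeeping of the $\frac{1}{r}$ factors (harmless since $r\ge 1$) and of the temperature-dependent coefficients $g,h,l,\kappa$, which are all normalized to unity at equilibrium, ensures that these structural constraints are respected throughout.
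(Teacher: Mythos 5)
Your overall strategy -- differentiate \eqref{1.5} in $t$, test with $r^2$-weighted time derivatives, exploit the exact integration-by-parts cancellation of the $\theta_{tr}q_t$ / $q_{tr}\theta_t$ coupling so that no factor $\tau^{-1}$ is generated, and treat the viscous dissipation as a by-product -- is exactly the paper's strategy, and your structural remarks about uniformity in $\tau,\mu,\lambda$ are on target. But there is a concrete gap in the mass--momentum block. With your stated multiplier $r^2\rho_t$ for the differentiated continuity equation, the term $\int_\Omega r^2\rho\,u_{tr}\rho_t\,\dif r+\int_\Omega 2r\rho\,u_t\rho_t\,\dif r$ appears on the left, while the pressure term $\int_\Omega R\theta\,\rho_{tr}\,r^2u_t\,\dif r$ from the momentum test integrates by parts to $-\int_\Omega R\theta\,\rho_t\,r^2u_{tr}\,\dif r-\int_\Omega 2R\theta\, r\rho_t u_t\,\dif r+(\text{cubic})$. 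The sum is $\int_\Omega(\rho-R\theta)\bigl(r^2u_{tr}\rho_t+2ru_t\rho_t\bigr)\dif r$, and near equilibrium $\rho-R\theta\approx 1-R$, which is \emph{not} small. This is a genuinely quadratic term of size $O(\mathcal D(t))$ with no small prefactor; it cannot be absorbed, since the only dissipation this lemma produces on the left is $\int r^2q_t^2$ and $(\tfrac43\mu+\lambda)\int r^2u_{tr}^2$, and the final closing of Proposition \ref{pro} requires every right-hand side to be of the form $E_0+E^{1/2}\int\mathcal D$. Your assertion that ``every other term generated during the manipulation is at least cubic'' therefore fails exactly here, and your claimed energy $\tfrac12\tfrac{\dif}{\dif t}\int r^2\rho_t^2$ (with no weight on $\rho_t^2$) confirms the missing symmetrizer.

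The fix is standard and is what the paper does implicitly: either test the differentiated mass equation with the symmetrizer weight $\tfrac{R\theta}{\rho}r^2\rho_t$ (then both quadratic cross terms cancel exactly, leaving only $-\int R\theta_r\rho_t r^2u_t\,\dif r\lesssim E^{1/2}\mathcal D$), or, as in \eqref{3.15b}, do not test the mass equation at all but instead integrate the pressure term by parts and substitute $(r^2u_t)_r=\tfrac{r^2}{\rho}(F_1-\rho_{tt}-u\rho_{tr})$ from $\eqref{1.10}_1$, which produces $\tfrac{\dif}{\dif t}\int\tfrac{R\theta}{2\rho}r^2\rho_t^2$ directly. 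Once this weight is inserted the rest of your argument goes through; a minor additional remark is that the bound on the initial layer is immediate from the definition of $E(0)$ (which already contains $\|r(\rho_t,u_t,\theta_t,\sqrt{\tau}q_t)\|_{L^2}^2$ at $t=0$), so Assumption \ref{ass}(2) is not needed for this lemma.
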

 \begin{proof}
 Taking the $t$-derivative of \eqref{1.5}, we have
 \begin{align}\label{1.10}
 \begin{cases}
 \rho_{tt}+\rho u_{rt}+u \rho_{rt}+\frac{2}{r} \rho u_t=F_1,\\
 \rho u_{tt}+\rho u u_{rt}+R\theta \rho_{rt}+R\rho \theta_{rt}-(\frac{4}{3} \mu(\theta)+\lambda(\theta))\left(u_{rr}+\frac{2}{r} u_r-\frac{2}{r^2} u\right)_t=F_2,\\
 \rho e_\theta \theta_{tt}+B(\rho, u, \theta, q) \theta_{rt}+p \left(u_r+\frac{2}{r} u\right)_t+\left( q_r+\frac{2}{r}q\right)_t=F_3,\\
 \tau(\theta) \rho\left( q_{tt}+u q_{rt}+u \left(\frac{2}{r} q\right)_t\right)+q_t+\kappa(\theta) \theta_{rt}=F_4,
 \end{cases}
 \end{align}
 where   we denote $B:=\rho u e_\theta-\frac{2a(\theta)}{Z(\theta)} q$ and
 \begin{align*}
 F_1:&=-\rho_t u_r-u_t \rho_r-\frac{2}{r} \rho_t u,\\
 F_2:&=-\rho_t u_t-(\rho u)_t u_r-R \theta_t \rho_r-R \rho_t \theta_r \\
 +&\left(\frac{4}{3} \mu(\theta)+\lambda(\theta)\right)_t\left(u_{rr}+\frac{2}{r} u_r-\frac{2}{r^2} u\right)+\left(\lambda^\prime(\theta)\theta_r\left(u_r+\frac{2}{r}u\right)+\frac{4}{3}\mu^\prime(\theta) \theta_r\left(u_r-\frac{u}{r}\right)\right)_t,\\
 F_3:&= -(\rho e_\theta)_t \theta_t-B_t \theta_r-\rho_t\left(u_r+\frac{2}{r} u\right)+\left( \left(\frac{2}{\tau(\theta)}+\frac{4}{r} u\right)a(\theta) q^2 \right)_t\\
 &\qquad\qquad\qquad+\left(\mu(\theta)\left( 2u_r^2+\frac{4}{r^2}u^2-\frac{2}{3} \left(u_r+\frac{2}{r} u\right)^2\right)+\lambda(\theta)\left(u_r+\frac{2}{r} u\right)^2\right)_t,\\
 F_4:&=-(\tau(\theta) \rho)_t q_t -(\tau(\theta) \rho u)_t q_r-(\tau(\theta) \rho u)_t \frac{2}{r} q-\kappa(\theta)_t\theta_r.
 \end{align*}
 Multiplying $\eqref{1.10}_2$ by $r^2u_t$, integrating with respect to $r$, we get
 \begin{align} \label{1.11}
 \int_{\Omega}  (\rho u_{tt}&+\rho u u_{rt}) r^2 u_t \dif r+ \int_{\Omega} R \theta \rho_{rt} r^2 u_t  \dif r +\int_{\Omega} R \rho \theta_{rt} r^2 u_t \dif r\nonumber\\
&-\int_{\Omega} (\frac{4}{3} \mu(\theta)+\lambda(\theta))\left(u_{rr}+\frac{2}{r} u_r-\frac{2}{r^2} u\right)_t r^2 u_t=\int_{\Omega} F_2 r^2 u_t \dif r.
 \end{align}
 We estimate each term as follows.

 First, the mass equation $\eqref{1.5}_1$ gives
 \begin{align} \label{3.15a}
 \int_{\Omega}  (\rho u_{tt}+\rho u u_{rt}) r^2 u_t \dif r=\int_{\Omega} \rho r^2 \left(\frac{1}{2} u_t^2 \right)_t +r^2 \rho u \left(\frac{1}{2} u_t^2\right)_r \dif r
 =\frac{\dif}{\dif t} \int_{\Omega} \frac{1}{2} \rho r^2 u_t^2 \dif r.
 \end{align}
Second, using again the boundary condition $u_t|_{\partial \Omega}=0$, one has
\begin{align}
\int_{\Omega} R \theta r^2 \rho_{rt} u_t \dif r&= -\int_{\Omega} \rho_t (R \theta r^2 u_t)_r \dif r
=-\int_{\Omega} R\rho_t \theta_r r^2 u_t \dif r -\int_{\Omega} R \rho_t \theta (r^2 u_t)_r \dif r \nonumber\\
& \ge  -CE^\frac{1}{2}(t) \mathcal D(t)-\int_{\Omega} R\rho_t \theta \left( -\frac{1}{\rho} r^2 \rho_{tt}-\frac{1}{\rho} r^2 u \rho_{tr}+\frac{1}{\rho} r^2 F_1\right)\dif r \nonumber\\
&\ge -CE^\frac{1}{2}(t) \mathcal D(t)+\frac{\dif}{\dif t} \int_{\Omega} \frac{R\theta}{2\rho} r^2 \rho_t^2 \dif r.
\label{3.15b}
\end{align}
Similarly, one has
\begin{align}
&-\int_{\Omega} (\frac{4}{3} \mu(\theta)+\lambda(\theta))\left(u_{rr}+\frac{2}{r} u_r-\frac{2}{r^2} u\right)_t r^2 u_t \nonumber\\&
=-\int_{\Omega} (\frac{4}{3} \mu(\theta)+\lambda(\theta))\left( (r^2 u_r)_{rt}-2 u_t\right) u_t \dif r \nonumber \\
&\ge \int_{\Omega}  (\frac{4}{3} \mu(\theta)+\lambda(\theta))(r^2 u_{rt}^2+2u_t^2)\dif r
  -CE^\frac{1}{2}(t) \mathcal D(t).
\label{3.15c}
\end{align}
So, we have from \eqref{1.11}-\eqref{3.15c}
\begin{align}\label{1.12}
\frac{\dif}{\dif t} \int_{\Omega} \left(\frac{1}{2} \rho r^2 u_t^2+\frac{\theta}{2\rho} r^2 \rho_t^2 \right)\dif r + \int_{\Omega}  R \rho \theta_{rt} r^2 u_t \dif r +&\int_{\Omega}  (\frac{4}{3} \mu(\theta)+\lambda(\theta))(r^2 u_{rt}^2+2u_t^2)\dif r
\nonumber\\
&\le CE^\frac{1}{2}(t) \mathcal D(t).
 \end{align}

 Multiplying $\eqref{1.10}_3$ by $\frac{1}{\theta} r^2 \theta_t$, we have
 \begin{align}
 \int_{\Omega} \frac{\rho e_\theta}{\theta} r^2 \theta_{tt}\theta_t \dif r+ \int_{\Omega} \frac{1}{\theta} r^2  B \theta_{rt} \theta_t \dif r +\int_{\Omega}  R \rho r^2(u_r+\frac{2}{r}u)_t  \theta_t \dif r
 +&\int_{\Omega} \frac{1}{\theta} r^2 (q_r+\frac{2}{r} q)_t \theta_t \dif r  \nonumber\\
 &=\int_{\Omega} \frac{1}{\theta} r^2 F_3  \theta_t \dif r . \label{1.13}
 \end{align}

  We estimate each term as follows.
First,
\begin{align}
\int_{\Omega} r^2 \frac{\rho e_\theta}{\theta} \theta_{tt} \theta_t \dif r& =\frac{\dif}{\dif t} \int_{\Omega} \frac{\rho e_\theta}{2\theta} r^2 \theta_t^2 \dif r -\int_{\Omega} \left( \frac{\rho e_\theta}{2\theta}\right)_t r^2 \theta_t^2 \dif r \nonumber\\
&\ge \frac{\dif}{\dif t} \int_{\Omega} \frac{\rho e_\theta}{2 \theta} r^2 \theta_t^2 \dif r- CE^\frac{1}{2}(t) \mathcal D(t). \label{3.17a}
\end{align}
Second, using the fact $B|_{\partial \Omega}=0$, one has
\begin{align}
\int_{\Omega} \frac{1}{\theta} r^2 B \theta_{rt} \theta_t \dif r = \int_{\Omega} \frac{1}{\theta} r^2 B \left(\frac{1}{2} \theta_t^2\right)_r \dif r
=-\int_{\Omega} \left(\frac{1}{\theta} r^2 B\right)_r \frac{1}{2}\theta_t^2 \dif r \ge -CE^\frac{1}{2}(t) \mathcal D(t). \label{3.17b}
\end{align}
 Third, using the fact $u_t|_{\partial \Omega}=0$, we get
 \begin{align}
 \int_{\Omega} R \rho r^2 (u_r+\frac{2}{r}u)_t \theta_t \dif r&=\int_{\Omega} R \rho (r^2 u)_{rt} \theta_t \dif r=-\int_{\Omega} R(r^2 u)_t  (\rho_r \theta_t+\rho \theta_{tr})\dif r \nonumber\\
& \ge -\int_{\Omega} R \rho r^2 u_t \theta_{tr} \dif r- CE^\frac{1}{2}(t) \mathcal D(t).
\label{3.17c}
 \end{align}
 The last term on the left-hand side of equation \eqref{1.13} reduces to
 \begin{align}
 \int_{\Omega} \frac{1}{\theta} r^2 \left( q_r+\frac{2}{r} q\right)_t \theta_t \dif r =\int_{\Omega} \frac{1}{\theta} (r^2 q)_{rt} \theta_t \dif r. \label{3.17d}
 \end{align}
 Thus, we derive from \eqref{1.13}-\eqref{3.17d}
 \begin{align}\label{1.14}
 \frac{\dif}{\dif t} \int_{\Omega} \frac{\rho e_\theta}{2\theta} r^2 \theta_t^2 \dif r -\int_{\Omega} R \rho r^2 u_t \theta_{tr} \dif r+\int_{\Omega} \frac{1}{\theta} (r^2 q)_{rt} \theta_t \dif r \le CE^\frac{1}{2}(t) \mathcal D(t).
 \end{align}

 Multiplying $\eqref{1.10}_4$ by $\frac{1}{\kappa(\theta) \theta} r^2 q_t$, one gets
 \begin{align}
 \int_{\Omega}  \frac{\tau(\theta) }{\kappa(\theta) \theta} \rho r^2 q_{tt}  q_t\dif r&+\int_{\Omega} \frac{\tau(\theta) }{\kappa(\theta) \theta} \rho u r^2 q_{rt} q_t \dif r +\int_{\Omega} \frac{\tau(\theta) }{\kappa(\theta) \theta} \rho u r^2 \left(\frac{2}{r}q \right)_t q_t\dif r \nonumber\\
 &+\int_{\Omega} \frac{1}{\kappa(\theta)\theta} r^2 q_t^2 \dif r+\int_{\Omega} \frac{1}{\theta} r^2 q_t \theta_{rt} \dif r
 =\int_{\Omega} \frac{1}{\kappa(\theta)\theta} r^2 F_4 q_t \dif r. \label{1.15}
 \end{align}

 We estimate each term as follows.

 First,
 \begin{align*}
 \int_{\Omega} \frac{\tau(\theta) }{\kappa(\theta) \theta} \rho r^2 q_{tt} q_t\dif r&=\frac{\dif}{\dif t} \int_{\Omega} \frac{\tau(\theta) }{\kappa(\theta) \theta}\rho r^2 \frac{1}{2} q_t^2\dif r -\int_{\Omega} \left(\frac{\tau(\theta) }{\kappa(\theta) \theta} \rho r^2 \right)_t \frac{1}{2} q_t^2 \dif r\\
 & \ge \frac{\dif}{\dif t} \int_{\Omega} \frac{\tau(\theta) }{\kappa(\theta) \theta}\rho r^2 \frac{1}{2} q_t^2\dif r-CE^\frac{1}{2}(t) \mathcal D(t).
 \end{align*}
 Second, using the boundary condition $u|_{\partial \Omega}=0$, one has
 \begin{align*}
 \int_{\Omega} \frac{\tau(\theta) }{\kappa(\theta) \theta}  \rho u r^2 q_{tr} q_t \dif r &= \int_{\Omega} \frac{\tau(\theta) }{\kappa(\theta) \theta}  \rho u r^2 \left(\frac{1}{2} q_t^2\right)_r \dif r\\
 &=-\int_{\Omega} \left(\frac{\tau(\theta) }{\kappa(\theta) \theta}  \rho u r^2 \right)_r \frac{1}{2} q_t^2 \dif r \ge -CE^\frac{1}{2}(t) \mathcal D(t).
 \end{align*}
 Third,
 {\small
 \begin{align*}
 \int_{\Omega} \frac{\tau(\theta) }{\kappa(\theta) \theta}  \rho u r^2 \left(\frac{2}{r} q\right)_t q_t +\frac{1}{\kappa(\theta)\theta} r^2 q_t^2 \dif r
 =\int_{\Omega} \left( \tau(\theta) \rho u \frac{2}{r}+1\right) \frac{1}{\kappa(\theta)\theta} r^2 q_t^2 \dif r \ge \int_{\Omega} \frac{1}{2\kappa(\theta)\theta} r^2 q_t^2\dif r.
 \end{align*}}
 For the last term on the left hand side of equation \eqref{1.15}, one has
 \begin{align*}
 \int_{\Omega} \frac{1}{\theta} r^2 q_t \theta_{rt} \dif r =\int_{\Omega} \left(\frac{1}{\theta}\right)_r r^2 q_t \theta_t \dif r-\int_{\Omega} \frac{1}{\theta} (r^2 q_t)_r \theta_t \dif r \ge -\int_{\Omega} \frac{1}{\theta} (r^2 q)_{rt} \theta_t \dif r -CE^\frac{1}{2}(t) \mathcal D(t).
 \end{align*}
 Thus, we derive that
 \begin{align} \label{1.16}
 \frac{\dif}{\dif t} \int_{\Omega} \frac{\rho \tau(\theta) }{2\kappa(\theta) \theta}   r^2 q_t^2 \dif r -\int_{\Omega} \frac{1}{\theta} (r^2 q)_{rt} \theta_t \dif r +\int_{\Omega} \frac{1}{2\kappa(\theta)\theta} r^2 q_t^2 \dif r \le CE^\frac{1}{2}(t) \mathcal D(t).
 \end{align}
 Therefore, combining \eqref{1.12}, \eqref{1.14} and \eqref{1.16}, we have
 \begin{align}\label{1.17}
 \frac{\dif }{\dif t} \int_{\Omega} \left( \frac{\rho}{2}  r^2 u_t^2+\frac{R\theta}{2\rho} r^2 \rho_t^2+\frac{\rho e_\theta}{2\theta} r^2 \theta_t^2 +\frac{\rho\tau(\theta)}{2\kappa(\theta)\theta} r^2 q_t^2 \right) \dif r
 +\int_{\Omega} \frac{1}{2\kappa(\theta) \theta} r^2 q_t^2 \dif r
 \nonumber\\
+\int_{\Omega}  (\frac{4}{3} \mu(\theta)+\lambda(\theta))(r^2 u_{rt}^2+2u_t^2)\dif r
 \le CE^\frac{1}{2}(t) \mathcal D(t).
 \end{align}
Integrating over $(0,t)$,  and noticing the fact that
 \[
 \int_{\Omega} \left( \frac{\rho}{2}  r^2 u_t^2+\frac{R\theta}{2\rho} r^2 \rho_t^2+\frac{\rho e_\theta}{2\theta} r^2 \theta_t^2 +\frac{\rho\tau(\theta)}{2\kappa(\theta)\theta} r^2 q_t^2 \right) (t=0,r)\dif r\le   CE_0,
 \]
 we get the desired result \eqref{1.9} in Lemma \ref{le2}.
 \end{proof}

\begin{lemma}\label{le3}
There exists some constant $C$ such that
\begin{align}\label{1.18}
 \int_{\Omega} \left( \left(r^2 u_r^2+2u^2\right)+r^2 \rho_r^2+r^2 \theta_r^2+\tau (r^2 q_r^2+2q^2) \right)  \dif r +\int_0^t \int_{\Omega} (r^2 q_r^2+2q^2) \dif r \dif t  \nonumber\\
  \le C(E_0+ E^\frac{1}{2}(t) \int_0^t \mathcal D(s)\dif s ).
\end{align}
\end{lemma}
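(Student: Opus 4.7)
The strategy is to exploit each of the four equations in \eqref{1.5} as algebraic/differential identities that transfer control of time derivatives (already supplied by Lemma~\ref{le2}) and of $q$ in $L^2_tL^2_x$ (Lemma~\ref{le1}) to control of spatial derivatives. Two structural observations are central. First, whenever $f|_{\partial\Omega}=0$, a single integration by parts yields the identity
\[
\int_\Omega r^2\!\left(f_r+\tfrac{2}{r}f\right)^{\!2}dr=\int_\Omega\!\left(r^2 f_r^2+2f^2\right)dr,
\]
which is precisely the combination appearing in the lemma for $f=u$ and $f=q$. Second, the continuity and Cattaneo equations algebraically express the divergence-like quantities $u_r+\frac{2u}{r}$ and $\kappa(\theta)\theta_r$ in terms of time derivatives plus small-parameter corrections.

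Concretely, I would carry out five steps. (a) From $\eqref{1.5}_1$, write $u_r+\frac{2u}{r}=-(\rho_t+u\rho_r)/\rho$, square, weight by $r^2$, integrate, and apply the identity above to obtain $\|ru_r\|^2+2\|u\|^2\le C\|r\rho_t\|^2+CE\|r\rho_r\|^2$. (b) Multiply $\eqref{1.5}_4$ by $r^2 q$ and combine with $\eqref{1.5}_1$ (as in Lemma~\ref{le1}) to derive
\[
\tfrac{d}{dt}\!\!\int\tfrac{1}{2}\tau(\theta)\rho r^2 q^2\,dr+\int r^2 q^2\,dr=-\!\int r^2\kappa(\theta)\,q\,\theta_r\,dr+\text{(cubic)},
\]
yielding both the pointwise $\tau\|rq\|^2$ bound and the time-integrated $\int_0^t\|rq\|^2$ dissipation after Young's inequality (modulo a $\int_0^t\|r\theta_r\|^2$ term). (c) Solve $\eqref{1.5}_4$ pointwise for $\kappa(\theta)\theta_r=-q-\tau(\theta)\rho(q_t+uq_r+\frac{2u}{r}q)$ and square-integrate, giving $\|r\theta_r\|^2\le C\|rq\|^2+C\tau^2\|rq_t\|^2+CE\tau^2\|rq_r\|^2$; the $\tau^2$-weighted terms are admissible by $\tau\le 1$, with $\tau\|rq_t\|^2$ already in $E$ via Lemma~\ref{le2}. (d) Rearrange $\eqref{1.5}_3$ to isolate $q_r+\frac{2q}{r}=-\rho e_\theta\theta_t-B\theta_r-p(u_r+\frac{2u}{r})+\text{(quadratic)}$; squaring, weighting by $r^2$, and applying the boundary identity with $f=q$ produces $\|rq_r\|^2+2\|q\|^2$, whose time integral lies in $\mathcal D$ by already-established bounds on $\theta_t,\theta_r,u_r+\frac{2u}{r}$. (e) For $\|r\rho_r\|^2$, multiply $\eqref{1.5}_2$ by a suitable test function (e.g.\ $r^2\rho_r/(R\theta)$) and replace $u_r+\frac{2u}{r}$ in the viscous term by the continuity-equation expression from Step (a), producing a differential inequality whose higher-derivative pieces carry the small factor $(\frac{4}{3}\mu+\lambda)$ and can therefore be absorbed.

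The principal technical obstacle is the $\|r\rho_r\|^2$ bound: density has no dissipation of its own, so its spatial regularity must be extracted from $\eqref{1.5}_2$, whose dominant contribution $\partial_r((\frac{4}{3}\mu+\lambda)(u_r+\frac{2u}{r}))$ carries the small viscosity factor that needs to be distributed with care. The emerging terms $(\frac{4}{3}\mu+\lambda)\|ru_{rr}\|^2$ must be paired with the weighted dissipation $(\frac{4}{3}\mu+\lambda)\|ru_{trr}\|^2$ from $\mathcal D$ so as to remain uniform as $\mu,\lambda\to 0$. A secondary difficulty, tied to the relaxation limit, is the uniform $\|r\theta_r\|^2$ bound as $\tau\to 0$; this is ultimately handled through the compatibility hypothesis \eqref{compatibility}, which together with the Cattaneo relation enforces $q\approx-\kappa(\theta)\theta_r$ up to $O(\sqrt{\tau})$ corrections propagated through the flow.
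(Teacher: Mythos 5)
Your plan diverges substantially from the paper's proof, and it contains a circular dependency that I do not see how to repair within your framework. The paper proves this lemma by a single combined energy estimate: it differentiates the whole system in $r$ (system \eqref{1.19}), tests the resulting momentum, energy and Cattaneo equations with $r^2u_r$, $\frac{1}{\theta}r^2\theta_r$ and $\frac{1}{\kappa(\theta)\theta}r^2q_r$ respectively (plus the undifferentiated $\eqref{1.5}_2$ and $\eqref{1.5}_4$ tested with $2u$ and $\frac{2}{\kappa(\theta)\theta}q$ to generate the lower-order pieces $2u^2$ and $2q^2$), and exploits the exact cancellation of the cross terms $\int R\rho(r^2u_r)_r\theta_r$ and $\int\frac{1}{\theta}(r^2q_r)_r\theta_r$ between the three identities. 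This produces $\frac{\dif}{\dif t}$ of the full quantity $\frac12\rho(r^2u_r^2+2u^2)+\frac{R\theta}{2\rho}r^2\rho_r^2+\frac{\rho e_\theta}{2\theta}r^2\theta_r^2+\frac{\tau(\theta)}{2\kappa(\theta)\theta}\rho(r^2q_r^2+2q^2)$ all at once, so no single first-order quantity has to be controlled before the others.

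The concrete gap is your step (c). To obtain the pointwise-in-time bound on $\|r\theta_r\|_{L^2}^2$ you solve the Cattaneo equation for $\kappa(\theta)\theta_r$ and therefore need $\|rq(t)\|_{L^2}^2\le C(E_0+\dots)$ with no factor of $\tau$. At this stage only $\tau\|rq(t)\|_{L^2}^2$ and $\int_0^t\|rq\|_{L^2}^2\,\dif s$ are available from Lemma \ref{le1}; the unweighted pointwise bound is precisely Lemma \ref{le4}, which the paper derives \emph{from} Lemma \ref{le3} (via $q=-\kappa(\theta)\theta_r-\tau(\theta)\rho(\dots)$, i.e.\ in the opposite direction to yours). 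Your step (d) does not rescue this: testing $\eqref{1.5}_3$ controls $\int(r^2q_r^2+2q^2)$, and on the exterior domain $(1,\infty)$ the quantity $\|rq\|_{L^2}^2$ is \emph{not} controlled by $\|q\|_{L^2}^2+\|rq_r\|_{L^2}^2$ (consider $q\sim r^{-1}$, suitably cut off at $r=1$). Step (e) is also underpowered: multiplying $\eqref{1.5}_2$ by $r^2\rho_r$ and substituting the mass equation into the viscous term yields, after time integration, the dissipation bound on $\int_0^t\|r\rho_r\|_{L^2}^2$ (this is exactly how the paper proves \eqref{1.38} in Lemma \ref{le5}), but not the pointwise bound $\|r\rho_r(t)\|_{L^2}^2\le CE_0$; for the latter the paper needs the term $\frac{\dif}{\dif t}\int\frac{R\theta}{2\rho}r^2\rho_r^2$ generated by pairing $\int p_{rr}r^2u_r$ with the differentiated mass equation inside the combined estimate. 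Your steps (a) and (b) and the identity $\int_\Omega r^2(f_r+\frac2rf)^2\dif r=\int_\Omega(r^2f_r^2+2f^2)\dif r$ for $f|_{\partial\Omega}=0$ are correct and consistent with the paper, but the overall scheme does not close.
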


\begin{proof}
 Taking the $r$-derivative of \eqref{1.5}, we get
 \begin{align}\label{1.19}
 \begin{cases}
 \rho_{tr}+u\rho_{rr}+\rho u_{rr}+\frac{2}{r} \rho u_r-\frac{2}{r^2} \rho u =G_1,\\
 \rho u_{tr}+\rho u u_{rr}+p_{rr}-\left((\frac{4}{3} \mu(\theta)+\lambda(\theta))\left(u_{rr}+\frac{2}{r} u_r-\frac{2}{r^2} u\right)\right)_r=G_2,\\
 \rho e_\theta \theta_{tr}+ B(\rho, u, \theta, q) \theta_{rr}+p\left(u_r+\frac{2}{r} u\right)_r+\left(q_r+\frac{2}{r}q\right)_r=G_3,\\
 \tau(\theta) \rho (q_{tr}+ u q_{rr})+q_r+\kappa(\theta) \theta_{rr} =G_4,
 \end{cases}
 \end{align}
 where
 \begin{align*}
 G_1:&=-2\rho_r u_r-\frac{2}{r} \rho_r u,\\
 G_2:&=-\rho_r u_t-(\rho u)_r u_r+ \left(\lambda^\prime(\theta)\theta_r\left(u_r+\frac{2}{r}u\right)+\frac{4}{3}\mu^\prime(\theta) \theta_r\left(u_r-\frac{u}{r}\right)\right)_r,\\
 G_3:&= -(\rho e_\theta)_r \theta_t -B_r(\rho ,u, \theta, q) \theta_r-p_r(u_r+\frac{2}{r} u)+\left( \left(\frac{2}{\tau(\theta)}+\frac{4}{r} u\right) a(\theta) q^2 \right)_r\\
&\qquad  +\left(\mu(\theta)\left( 2u_r^2+\frac{4}{r^2}u^2-\frac{2}{3} \left(u_r+\frac{2}{r} u\right)^2\right)+\lambda(\theta)\left(u_r+\frac{2}{r} u\right)^2\right)_r\\
 G_4:&=-(\tau(\theta) \rho)_r q_t-(\tau(\theta) \rho u)_r q_r- \left( \tau(\theta) \rho u \frac{2}{r} q\right)_r-\kappa^\prime(\theta) \theta_r^2.
 \end{align*}
 Multiplying $\eqref{1.19}_2$ by $r^2 u_r$, we get
 \begin{align}
 \int_{\Omega} \rho u_{tr} r^2 u_r \dif r+ \int_{\Omega} \rho u u_{rr} r^2 u_r \dif r+\int_{\Omega} p_{rr} r^2 u_r\dif r=\int_{\Omega} G_2 r^2 u_r \dif r.
 \end{align}

 We estimate the different terms as follows.

 First, one has
 \begin{align*}
 \int_{\Omega} \left( \rho u_{tr} r^2 u_r+\rho u u_{rr} r^2 u_r\right)\dif r =\int_{\Omega} \rho r^2 \left(\frac{1}{2} u_r^2\right)_t +\rho u r^2 \left(\frac{1}{2} u_r^2 \right)_r \dif r
 =\frac{\dif}{\dif r} \int_{\Omega} \frac{1}{2} \rho r^2 u_r^2 \dif r.
 \end{align*}
 Second, since $(u, u_t, \theta_r)|_{\partial \Omega}=0$, we derive from the momentum equation \eqref{1.5} that
 $$
 \left(p_r-(\frac{4}{3} \mu(\theta)+\lambda(\theta))\left(u_{rr}+\frac{2}{r} u_r-\frac{2}{r^2} u\right)\right)\Big|_{\partial\Omega}=0,
 $$
  so one gets
 \begin{align*}
& \int_{\Omega} \left(p_{rr}- \left((\frac{4}{3} \mu(\theta)+\lambda(\theta))\left(u_{rr}+\frac{2}{r} u_r-\frac{2}{r^2} u\right)\right)_r \right)r^2 u_r \dif r\\
&  =-\int_{\Omega}\left( R(\theta \rho_r+\rho \theta_r)- (\frac{4}{3} \mu(\theta)+\lambda(\theta))\left(u_{rr}+\frac{2}{r} u_r-\frac{2}{r^2} u\right)\right) (r^2 u_r)_r \dif r.
 \end{align*}
 Using the following equation, arising from the mass equation,
 \begin{align}
 r^2 \rho_{tr}+2r^2 \rho_r u_r+r^2 u \rho_{rr} +\rho (r^2 u_r)_r+2 r \rho_r u-2\rho u=0,
 \end{align}
 we have
 \begin{align*}
 &-\int_{\Omega} R \theta \rho_r (r^2 u_r)_r \dif r=-\int_{\Omega} R \theta \rho_r( 2r u_r+r^2 u_{rr}) \dif r \\
 &=\int_{\Omega} R\theta \rho_r \frac{1}{\rho} \left( r^2 \rho_{tr}+2r^2 \rho_r u_r+r^2 u \rho_{rr} +2 r \rho_r u- 2\rho u\right) \dif r\\
 &=\int_{\Omega}  \frac{R\theta}{\rho} r^2 \left(\frac{1}{2} \rho_r^2\right)_t\dif r+ \int_{\Omega} \frac{R\theta}{\rho} 2 r^2 \rho_r^2 u_r \dif r+\int_{\Omega} \frac{R\theta}{\rho} \rho_r r^2 u \rho_{rr}\dif r
 +\int_{\Omega} \frac{R\theta}{\rho} \rho_r (2 r \rho_r u-2\rho u) \dif r\\
& \ge \frac{\dif}{\dif t} \int_{\Omega} \frac{R\theta}{2\rho} r^2  \rho_r^2 \dif r-\int_{\Omega} 2 R\theta \rho_r u \dif r-C E^\frac{1}{2}(t) \mathcal D(t).
 \end{align*}
 Moreover, we have
 \begin{align*}
& \int_{\Omega} \left(\frac{4}{3} \mu(\theta)+\lambda(\theta)\right)\left(u_{rr}+\frac{2}{r}u_r-\frac{2}{r^2} u\right) (r^2 u_r)_r \dif r\\
  &=\int_{\Omega} \left(\frac{4}{3} \mu(\theta)+\lambda(\theta)\right)\left(u_{rr}+\frac{2}{r}u_r-\frac{2}{r^2} u\right) (r^2 u_{rr}+2r u_r) \dif r\\
  &=\int_{\Omega} \left(\frac{4}{3} \mu(\theta)+\lambda(\theta)\right) \left( r^2 u_{rr}^2+4u_r^2+4r u_{rr} u_r -2 u u_{rr}-\frac{4}{r} u u_r\right)\dif r\\
  &\ge   \int_{\Omega} \left(\frac{4}{3} \mu(\theta)+\lambda(\theta)\right)\left(\frac{1}{5} r^2 u_{rr}^2+u_r^2\right) \dif r-   \int_{\Omega} \left(\frac{4}{3} \mu(\theta)+\lambda(\theta)\right)\frac{2}{r^2} u^2-C E^\frac{1}{2}(t) \mathcal D(t).
  \end{align*}
 Thus, we derive
 \begin{align}\label{1.20}
 \frac{\dif}{\dif t} \int_{\Omega} \left( \frac{1}{2} \rho r^2 u_r^2 +\frac{\theta}{2\rho} r^2 \rho_r^2\right) \dif r -\int_{\Omega} R \rho \theta_r (r^2 u_r)_r\dif r-\int_{\Omega} 2 R \theta \rho_r u \dif r  \nonumber\\
  +\int_{\Omega} \left(\frac{4}{3} \mu(\theta)+\lambda(\theta)\right)\left(\frac{1}{5} r^2 u_{rr}^2+u_r^2\right) \dif r-   \int_{\Omega} \left(\frac{4}{3} \mu(\theta)+\lambda(\theta)\right)\frac{2}{r^2} u^2\le C E^\frac{1}{2}(t) \mathcal D(t).
 \end{align}

 Multiplying $\eqref{1.19}_3$ by $\frac{1}{\theta} r^2 \theta_r$, one has
 \begin{align}
 \int_{\Omega} \rho e_\theta \theta_{tr}  \frac{1}{\theta} r^2 \theta_r\dif r+ \int_{\Omega} B(\rho, u, \theta, q) \theta_{rr} \frac{1}{\theta} r^2 \theta_r \dif r+\int_{\Omega} R\rho r^2\left(u_r+\frac{2}{r} u\right)_r \theta_r \dif r \nonumber\\
 +\int_{\Omega} \left( q_r+\frac{2}{r} q\right)_r \frac{1}{\theta} r^2 \theta_r \dif r=\int_{\Omega} G_3 \frac{1}{\theta} r^2 \theta_r \dif r  .\label{1.21}
 \end{align}

 We estimate each term in the above equation as follows.

 First, one has
 \begin{align*}
 \int_{\Omega} \rho e_\theta \theta_{tr} \frac{1}{\theta} r^2 \theta_r \dif r=\int_{\Omega} \frac{\rho e_\theta}{\theta} r^2 \left(\frac{1}{2} \theta_r^2\right)_t \dif r \ge \frac{\dif}{\dif t} \int_{\Omega} \frac{\rho e_\theta}{2\theta} r^2 \theta_r^2 \dif r
 -C E^\frac{1}{2}(t) \mathcal D(t).
 \end{align*}
 Second, using the fact $B(\rho, u, \theta, q)|_{\partial \Omega}=0$, we have
 \begin{align*}
 \int_{\Omega} B(\rho, u, \theta, q) \theta_{rr} \frac{1}{\theta} r^2\theta_r \dif r=-\int_{\Omega} \left(\frac{B(\rho,u, \theta, q)}{\theta}r^2 \right)_r \frac{1}{2} \theta_r^2 \dif r\ge -C E^\frac{1}{2}(t) \mathcal D(t).
 \end{align*}
 Third,
 \begin{align*}
 \int_{\Omega} R \rho r^2 \left( u_r+\frac{2}{r}u \right)_r \theta_r \dif r =\int_{\Omega} R \rho r^2 (u_{rr}+\frac{2}{r}u_r-\frac{2}{r^2} u)\theta_r \dif r\\
 =\int_{\Omega}  R\rho ( (r^2 u_r)_r-2u)\theta_r\dif r= \int_{\Omega} R\rho (r^2 u_r)_r\theta_r \dif r-\int_{\Omega} 2R \rho u \theta_r \dif r.
 \end{align*}
 Similarly, one has
 \begin{align*}
 \int_{\Omega} \left( q_r+\frac{2}{r} q\right)_r \frac{1}{\theta} r^2 \theta_r \dif r= \int_{\Omega} \frac{1}{\theta} (r^2 q_r)_r \theta_r \dif r- \int_{\Omega} \frac{2}{\theta} q \theta_r\dif r.
 \end{align*}
 So, we derive that
 \begin{align}\label{1.22}
 \frac{\dif}{\dif t} \int_{\Omega} \frac{\rho e_\theta}{2\theta} r^2 \theta_r^2 \dif r +\int_{\Omega}  \left(R \rho(r^2 u_r)_r \theta_r+\frac{1}{\theta} (r^2 q_r)_r \theta_r\right) \dif r
 -\int_{\Omega} (2 R \rho u \theta_r + \frac{2}{\theta} q \theta_r )\dif r \nonumber\\
 \le  CE^\frac{1}{2}(t) \mathcal D(t).
 \end{align}

 Multiplying $\eqref{1.19}_4$ by $\frac{1}{\kappa(\theta)\theta} r^2 q_r$, one get
 \begin{align} \label{1.23}
 \int_{\Omega} \frac{\tau(\theta)}{\kappa(\theta)\theta} \rho r^2 ( q_{tr}+u q_{rr}) q_r\dif r+\int_{\Omega} \frac{1}{\kappa(\theta)\theta} r^2 q_r^2 \dif r+\int_{\Omega} \frac{1}{\theta} \theta_{rr} (r^2 q_r) \dif r=\int_{\Omega} \frac{1}{\kappa(\theta)\theta} r^2 q_r G_4 \dif r.
 \end{align}
 We estimate each term in the above equation as follows.

 First, using the mass equation, one has
 \begin{align*}
  \int_{\Omega} \frac{\tau(\theta)}{\kappa(\theta)\theta} \rho r^2 ( q_{tr}+u q_{rr}) q_r\dif r&= \int_{\Omega} \frac{\tau(\theta)}{\kappa(\theta)\theta} \rho r^2 ( (\frac{1}{2} q_r^2)_t+u (\frac{1}{2}q_r^2)_r) \dif r\\
  &\ge \frac{\dif}{\dif t} \int_{\Omega} \frac{\tau(\theta)}{\kappa(\theta)\theta} \rho r^2  \frac{1}{2} q_r^2 \dif r -CE^\frac{1}{2}(t) \mathcal D(t).
  \end{align*}
 Second, use the fact $\theta_r|_{\partial \Omega}=0$, we have
 \begin{align*}
 \int_{\Omega} \frac{1}{\theta} \theta_{rr} r^2 q_r \dif r \ge -\int_{\Omega} \frac{1}{\theta} \theta_r (r^2 q_r)_r\dif r -CE^\frac{1}{2}(t) \mathcal D(t).
 \end{align*}
 Thus, we derive that
 \begin{align}\label{1.24}
 \frac{\dif}{\dif t} \int_{\Omega} \frac{\tau(\theta)}{2\kappa(\theta)\theta} \rho r^2 q_r^2 \dif r -\int_{\Omega} \frac{1}{\theta} \theta_r (r^2 q_r)_r \dif r+\int_{\Omega} \frac{1}{\kappa(\theta)\theta} r^2 q_r^2 \dif r \le CE^\frac{1}{2}(t) \mathcal D(t).
 \end{align}
 Combining \eqref{1.20}, \eqref{1.22} and \eqref{1.24}, we obtain{\small
 \begin{align}
& \frac{\dif}{\dif t} \int_{\Omega} \left( \frac{1}{2}\rho r^2 u_r^2+\frac{\theta}{2\rho} r^2 \rho_r^2+\frac{\rho e_\theta}{2\theta} r^2 \theta_r^2+\frac{\tau(\theta)}{2\kappa(\theta)\theta} \rho r^2 q_r^2 \right)\dif r
+\int_{\Omega} \frac{1}{\kappa(\theta)\theta} r^2 q_r^2 \dif r -\int_{\Omega}  \left(2 u p_r+\frac{2}{\theta} q\theta_r\right)\dif r \nonumber \\
&+ \int_{\Omega} \left(\frac{4}{3} \mu(\theta)+\lambda(\theta)\right)\left(\frac{1}{5} r^2 u_{rr}^2+u_r^2\right) \dif r-   \int_{\Omega} \left(\frac{4}{3} \mu(\theta)+\lambda(\theta)\right)\frac{2}{r^2} u^2 \le CE^\frac{1}{2}(t) \mathcal D(t). \label{1.25}
 \end{align}}
 On the other hand, multiplying $\eqref{1.5}_2$ by $2u$, one gets{\small
 \begin{align*}
& \int_{\Omega} 2 \rho u_t  u \dif r +\int_{\Omega} 2\rho u u_r  u \dif r+\int_{\Omega}  2 p_r  u \dif r\\
&= \int_{\Omega} \left(\left(\frac{4}{3} \mu(\theta)+\lambda(\theta)\right)\left(u_{rr}+\frac{2}{r}u_r-\frac{2}{r^2} u\right)+\lambda^\prime(\theta)\theta_r\left(u_r+\frac{2}{r}u\right)+\frac{4}{3}\mu^\prime(\theta) \theta_r\left(u_r-\frac{u}{r}\right) \right)2 u \dif r ,
\end{align*}}
For the term on the right hand side of the above equation, we have{\small
\begin{align*}
&\int_{\Omega} \left(\left(\frac{4}{3} \mu(\theta)+\lambda(\theta)\right)\left(u_{rr}+\frac{2}{r}u_r-\frac{2}{r^2} u\right)+\lambda^\prime(\theta)\theta_r\left(u_r+\frac{2}{r}u\right)+\frac{4}{3}\mu^\prime(\theta) \theta_r\left(u_r-\frac{u}{r}\right) \right)2 u \dif r\\
&\le -\int_{\Omega} \left(\frac{4}{3} \mu(\theta)+\lambda(\theta)\right)\left(2u_r^2+\frac{2}{r^2} u^2 \right)\dif r+CE^\frac{1}{2}(t) \mathcal D(t).
\end{align*}}
Thus, we derive
\begin{align}\label{1.26}
\frac{\dif}{\dif t} \int_{\Omega} \rho u^2 \dif r+\int_{\Omega}  2p_r  u \dif r+\int_{\Omega} \left(\frac{4}{3} \mu(\theta)+\lambda(\theta)\right)\left(2u_r^2+\frac{2}{r^2} u^2 \right)\dif r \le CE^\frac{1}{2}(t) \mathcal D(t).
 \end{align}
 Then, multiplying  $\eqref{1.5}_4$ by $\frac{2}{\kappa(\theta)\theta} q$, one has
 \begin{align*}
 \int_{\Omega} \tau(\theta) \rho (q_t+uq_r+u\cdot \frac{2}{r} q) \frac{2}{\kappa (\theta)\theta} q \dif r+\int_{\Omega} \frac{2}{\kappa(\theta)\theta} q^2 \dif r+\int_{\Omega} \frac{2}{\theta} q \theta_r \dif r=0,
 \end{align*}
 from which we get
 \begin{align}\label{1.27}
 \frac{\dif}{\dif t} \int_{\Omega} \frac{  \rho \tau(\theta)}{\kappa(\theta) \theta} q^2 \dif r +\int_{\Omega} \frac{2}{\kappa(\theta)\theta} q^2 \dif r+\int_{\Omega} \frac{2}{\theta} q \theta_r \dif r \le CE^\frac{1}{2}(t) \mathcal D(t).
 \end{align}

 Therefore, combining \eqref{1.25}, \eqref{1.26} and \eqref{1.27}, we conclude that
 \begin{align*}
  \frac{\dif}{\dif t} \int_{\Omega} \left(\frac{1}{2} \rho (r^2 u_r^2+2u^2)+\frac{\theta}{2\rho} r^2\rho_r^2+\frac{\rho e_\theta}{2\theta} r^2 \theta_r^2+\frac{\tau(\theta)}{2\kappa(\theta)\theta}(r^2 q_r^2+2q^2)\right)\dif r  \\
+  \int_{\Omega} \left(\frac{4}{3} \mu(\theta)+\lambda(\theta)\right)\left(\frac{1}{5} r^2 u_{rr}^2+3u_r^2\right) \dif r +\int_{\Omega} \frac{1}{\kappa(\theta)\theta} (r^2 q_r^2+2q^2) \dif r \le CE^\frac{1}{2}(t) \mathcal D(t).
 \end{align*}
 Integrating the above result with respect to $t$, the proof  of Lemma \ref{le3} is finished.
 \end{proof}

 Using the equation $\eqref{1.5}_3$ and $\eqref{1.5}_4$ and Lemmas \ref{le1}-\ref{le3}, we obtain
 \begin{lemma}\label{le4}
 There exists some constant $C$ such that
 \begin{align}\label{1.29}
 \int_{\Omega} r^2(q^2+q_r^2) \dif r \le C(E_0+E^\frac{1}{2}(t) \int_0^t \mathcal D(s) \dif s + E^2(t)).
 \end{align}
 \end{lemma}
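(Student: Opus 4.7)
The plan is to use the two equations $\eqref{1.5}_3$ and $\eqref{1.5}_4$ pointwise, solving them algebraically for $q$ and $q_r$ respectively, then squaring and integrating to convert the $\tau$-weighted dissipation control already obtained in Lemmas \ref{le1}--\ref{le3} into the desired $\tau$-independent estimate for $\int_{\Omega}r^2(q^2+q_r^2)\dif r$.

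For the bound on $\int_{\Omega}r^2 q^2\dif r$, rewrite $\eqref{1.5}_4$ as
\[
q = -\kappa(\theta)\theta_r - \tau(\theta)\rho\left(q_t + u q_r + \frac{2u}{r} q\right).
\]
Multiplying by $r$, squaring, and integrating yields, by Young's inequality,
\[
\int_{\Omega} r^2 q^2 \dif r \le C\int_{\Omega} r^2 \theta_r^2 \dif r + C\tau^2 \int_{\Omega} r^2 q_t^2 \dif r + C\tau^2\|u\|_{L^\infty}^2\int_{\Omega} r^2 q_r^2 \dif r + C\tau^2\|u\|_{L^\infty}^2\int_{\Omega} q^2 \dif r.
\]
The first term is bounded via Lemma \ref{le3}. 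For the second, split $\tau^2 = \tau\cdot\tau$ and apply Lemma \ref{le2}. For the third, split $\tau^2 = \tau\cdot\tau$ and apply Lemma \ref{le3}, combined with the one-dimensional Sobolev embedding $\|u\|_{L^\infty}^2 \le C\|u\|_{H^1}^2 \le CE(t)$ (using that both $\|u\|_{L^2}$ and $\|u_r\|_{L^2}$ appear in $E$). The last term is $O(E^2(t))$ upon using $\|q\|_{L^2}^2 \le E(t)$.

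For the bound on $\int_\Omega r^2 q_r^2\dif r$, rearrange $\eqref{1.5}_3$ into
\[
q_r = -\frac{2q}{r} - \rho e_\theta\, \theta_t - B\,\theta_r - R\rho\theta\Big(u_r+\frac{2u}{r}\Big) + \Big(\frac{2}{\tau(\theta)}+\frac{4u}{r}\Big)a(\theta)\, q^2 + \mu(\theta)(\cdots) + \lambda(\theta)(\cdots).
\]
The decisive algebraic point is that $a(\theta)/\tau(\theta)$ is uniformly bounded as $\tau\to 0$: since $a(\theta) = Z(\theta)/\theta - Z'(\theta)/2$ and $Z(\theta) = \tau(\theta)/\kappa(\theta)$, both $Z(\theta)/\tau(\theta)$ and $Z'(\theta)/\tau(\theta)$ are of order one in $\tau$, so the apparent $\tau^{-1}$ singularity disappears. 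Multiplying this identity by $r$, squaring, and integrating, the linear-in-derivatives contributions on the right are controlled by $C(E_0+E^{1/2}(t)\int_0^t\mathcal{D}(s)\dif s)$ via Lemmas \ref{le1}--\ref{le3}, while the quadratic nonlinear contributions such as $\int r^2 q^4\dif r$ and $\int r^2 (\mu+\lambda)^2 (u_r+2u/r)^4\dif r$ are absorbed into $CE^2(t)$ by extracting an $L^\infty$ factor bounded by $E^{1/2}(t)$ through the one-dimensional Sobolev embedding.

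The main technical obstacle is the careful tracking of the two $\tau$-cancellations described above: the explicit $\tau$ in equation $\eqref{1.5}_4$ must compensate the $\tau^{-1}$ hidden inside the dissipation bounds of Lemmas \ref{le2}--\ref{le3}, and the superficial $\tau(\theta)^{-1}$ in equation $\eqref{1.5}_3$ must be neutralised by the $\tau$-order factor $a(\theta)$. Once both are verified, summing the two resulting estimates delivers \eqref{1.29}.
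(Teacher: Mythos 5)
Your proposal is correct and follows essentially the same route as the paper, which gives no written proof but states precisely that the lemma follows by reading $q$ off from $\eqref{1.5}_4$ and $q_r$ off from $\eqref{1.5}_3$ and invoking Lemmas \ref{le1}--\ref{le3}; your identification of the two $\tau$-cancellations (the explicit $\tau$ in $\eqref{1.5}_4$ against the $\tau$-weighted bounds of Lemmas \ref{le2}--\ref{le3}, and $a(\theta)=O(\tau)$ against $\tau(\theta)^{-1}$) is exactly the content that makes the estimate $\tau$-uniform. The only cosmetic point is that the term $\tfrac{2}{r}q$ in the $q_r$ identity is not controlled pointwise in time by Lemmas \ref{le1}--\ref{le3} alone but by the first half of the lemma itself via $\int_\Omega q^2\,\dif r\le\int_\Omega r^2q^2\,\dif r$ (using $r\ge 1$), which is consistent with the order in which you prove the two bounds.
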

 Now we continue with
 \begin{lemma}\label{le5}
  There exists some constant $C$ such that
\begin{align}\label{1.30}
\int_0^t \int_{\Omega}( r^2 |D(\rho, u, \theta, q)|^2 +u^2+q^2)\dif r \dif t \le C(E_0+E^\frac{1}{2}(t) \int_0^t  \mathcal D(s)\dif s).
\end{align}
\end{lemma}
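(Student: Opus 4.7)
The approach is to view the system \eqref{1.5} as a collection of constitutive relations which allow us to convert time derivatives into spatial derivatives and vice versa, and thereby bootstrap from the dissipations already extracted in Lemmas~\ref{le1}--\ref{le4}. First, from $\eqref{1.5}_4$ one solves $\kappa(\theta)\theta_r = -q - \tau(\theta)\rho(q_t + u q_r + 2uq/r)$; squaring, multiplying by $r^2$ and integrating in $(r,s)$, together with $\tau\le 1$ and Lemmas~\ref{le1}--\ref{le2}, this bounds $\int_0^t\!\int r^2\theta_r^2\,dr\,ds$. Next, the mass equation yields $\rho_t = -\rho u_r - u\rho_r - 2\rho u/r$, so that $r^2\rho_t^2 \le Cr^2 u_r^2 + (\text{nonlinear})$, and the energy equation $\eqref{1.5}_3$ solved for $\theta_t$ gives a pointwise bound $r^2\theta_t^2 \le C(r^2\theta_r^2 + r^2 u_r^2 + r^2 q_r^2 + u^2 + q^2) + (\text{nonlinear})$. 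After time integration and invocation of Step~1 and the bounds $\int_0^t\!\int(r^2 q_r^2 + q^2)\,dr\,ds \le CE_0 + CE^{1/2}\int_0^t\mathcal D$ from Lemma~\ref{le3}, these estimates reduce the remaining task to controlling $\int_0^t\!\int(r^2\rho_r^2 + r^2 u_r^2 + r^2 u_t^2 + u^2)\,dr\,ds$ uniformly in $\tau,\mu,\lambda$.

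The crucial step is to extract $\int_0^t\!\int r^2\rho_r^2$ \emph{without} a viscous weight. I would multiply $\eqref{1.5}_2$ by $r^2\rho_r/\rho$ and integrate in $r$. The pressure contribution furnishes the positive dissipation $R\!\int r^2\theta\rho_r^2/\rho\,dr \ge c\!\int r^2\rho_r^2\,dr$; the $u_t\rho_r$-term is converted to a perfect time derivative plus absorbable cross terms by integration by parts in $t$ combined with the mass equation $\eqref{1.5}_1$; the viscous part, integrated by parts in $r$ (licit because $u|_{\partial\Omega}=0$), produces $u_{rr}$-, $u_r$- and $u$-terms with the prefactor $(\tfrac{4}{3}\mu+\lambda)$, which are handled by Cauchy--Schwarz exploiting the fact that $\|ru_{rr}\|_{L^2}^2$ lies in $\mathcal D(t)$ \emph{without} a viscous coefficient and that $(\tfrac{4}{3}\mu+\lambda)^2$ is bounded. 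Then, squaring $\eqref{1.5}_2$ and invoking the bound just obtained for $\rho_r$ delivers $\int_0^t\!\int r^2 u_t^2$, and the mass equation returns $\int_0^t\!\int r^2 u_r^2$.

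For $\int_0^t\!\int u^2$, the annular case follows directly from Poincar\'e's inequality together with $u|_{\partial\Omega}=0$, and the exterior case is handled by combining the pointwise-in-time bound $\|u\|_{L^2}^2\le CE$ recovered in the proof of Lemma~\ref{le3} (via the $\eqref{1.5}_2\cdot 2u$ computation) with the now-controlled $\int_0^t\!\int r^2 u_r^2$ and a Hardy-type estimate in spherical symmetry. Collecting all contributions and absorbing the $E^{1/2}\int\mathcal D$ pieces into the right-hand side, which is permissible under the smallness assumption on $E$, yields the stated inequality. The principal obstacle is Step~3: the natural velocity dissipation from Lemma~\ref{le3} carries the prefactor $(\tfrac{4}{3}\mu+\lambda)$ and degenerates in the inviscid limit, so one must exploit the hyperbolic coupling between density and velocity through the pressure gradient to produce $\rho_r$- and $u_r$-dissipation with constants genuinely independent of $\mu$ and $\lambda$.
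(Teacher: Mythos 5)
Your first step (recovering $\int_0^t\!\int_\Omega r^2\theta_r^2$ from $\eqref{1.5}_4$) is exactly the paper's \eqref{1.31}, and the general bootstrap philosophy is right, but the proposal has a genuine gap at its core: your chain for $\int_0^t\!\int_\Omega (r^2u_r^2+u^2)$ is circular. You propose to get $u_r$ from the mass equation, which requires $\rho_t$ and $u$; but $\rho_t$ is itself only obtained from the mass equation in terms of $u_r$ and $u$, and $u$ is to come from Poincar\'e/Hardy, which requires $u_r$. None of these provides an independent entry point, and the momentum equation only controls the product $uu_r$, which is useless since $u$ is small. The paper's entry point is to test the energy equation $\eqref{1.5}_3$ against $\frac{1}{\rho e_\theta}(r^2u)_r$: the pressure-work term $\frac{R\theta}{e_\theta}(u_r+\frac{2}{r}u)(r^2u)_r$ is, after one integration by parts, coercive for $r^2u_r^2+2u^2$ with a constant independent of $\mu,\lambda,\tau$, while the dangerous coupling $\theta_t(r^2u)_r$ is defused by integrating by parts \emph{in time}, trading it for $\theta_r r^2 u_t$ plus an $\eta\int r^2u_t^2$ that is closed afterwards by testing the momentum equation with $r^2u_t$ (see \eqref{1.33}--\eqref{1.36}). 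This compensating-function mechanism is the heart of the lemma and is absent from your proposal; it cannot be replaced by the mass equation.

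The second step that would fail quantitatively is your treatment of the viscous term in the $\rho_r$-estimate. Bounding $(\tfrac{4}{3}\mu+\lambda)u_{rr}\cdot r^2\rho_r$ by Cauchy--Schwarz using the fact that $\|ru_{rr}\|_{L^2}^2\le\mathcal D(t)$ produces, after time integration, a term $C\int_0^t\mathcal D(s)\,\dif s$ carrying \emph{no} small prefactor; since $C$ is not below one, this cannot be absorbed when the lemmas are summed in Proposition \ref{pro} — the whole scheme requires every dissipation appearing on the right-hand side to come with the factor $E^{1/2}$. The paper instead substitutes the $r$-differentiated mass equation, $\rho\bigl(u_{rr}+\tfrac{2}{r}u_r-\tfrac{2}{r^2}u\bigr)=-\rho_{tr}-u\rho_{rr}+G_1$, into the viscous term, which then becomes the exact time derivative $-\frac{\dif}{\dif t}\int(\tfrac{4}{3}\mu+\lambda)\frac{1}{2\rho}r^2\rho_r^2$ up to $O(E^{\frac12}\mathcal D)$ errors; this is precisely what makes the $\rho_r$-dissipation uniform in $\mu,\lambda$. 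A related ordering problem: doing the $\rho_r$-estimate before the $u_t$-estimate, as you propose, leaves the cross term $\int u_t r^2\rho_r\,\dif r$, whose time-boundary term after integration by parts in $t$ is of size $E(t)$ rather than $E_0+E^{\frac12}\int\mathcal D$; the paper's ordering ($u_r,u$ first, then $\rho_t$, then $u_t$ via \eqref{1.36}, and only then $\rho_r$) is what allows that term to be absorbed by the already-controlled $\int_0^t\!\int r^2u_t^2$.
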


\begin{proof}
Using equation $\eqref{1.5}_4$ and Lemmas \ref{le1}-\ref{le3}, we have
\begin{align}
\int_0^t \int_{\Omega} \kappa^2(\theta) r^2  \theta_r^2 \dif r \dif t &\le C \int_0^t \int_{\Omega} \left( r^2 q^2+\tau^2(\theta) \rho^2 r^2 q_t^2+\tau^2(\theta) \rho^2 r^2 q_r^2+\tau^2(\theta) \rho^2 u^2 4 q^2\right)\dif r \dif t  \nonumber\\
&\le C(E_0+E^\frac{1}{2}(t) \int_0^t  \mathcal D(s)\dif s). \label{1.31}
\end{align}

Multiplying $\eqref{1.5}_3$ by $\frac{1}{\rho e_\theta} (r^2u)_r$ and integrating the result, we get
\begin{align}
&\int_0^t \int_{\Omega} \frac{p}{\rho e_\theta} (u_r+\frac{2}{r} u) (r^2 u)_r \dif r \nonumber\\
=&-\intx  \theta_t (r^2 u)_r\dif r \dif t-\intx \frac{B(\rho, u, \theta, q)}{\rho e_\theta} \theta_r (r^2 u)_r \dif r \dif t-\intx \frac{1}{\rho e_\theta} (q_r+\frac{2}{r} q) (r^2 u)_r \dif r \dif t \nonumber\\
&+\intx \frac{1}{\rho e_\theta} \left(\frac{2}{\tau(\theta)}+\frac{4}{r} u\right) a(\theta) q^2 (r^2 u)_r \dif r \dif t \nonumber\\
&+ \intx \left(\mu(\theta)\left( 2u_r^2+\frac{4}{r^2}u^2-\frac{2}{3} \left(u_r+\frac{2}{r} u\right)^2\right)+\lambda(\theta)\left(u_r+\frac{2}{r} u\right)^2\right) (r^2 u)_r \dif r \dif t . \label{1.32}
\end{align}
We estimate each term in the above equation as follows.
First, one has
\begin{align*}
&\intx \frac{R\theta}{e_\theta} (u_r+\frac{2}{r} u)(r^2 u_r+2 r u) \dif r \dif t \\
&=\intx \frac{R\theta}{e_\theta} (r^2 u_r^2+4u^2+4 r u u_r)\dif r \dif t\\
&=\intx \frac{R\theta}{e_\theta} (r^2u_r^2+2u^2)\dif r \dif t-\intx \left(\frac{R\theta}{e_\theta}\right)_r r 2u^2 \dif r \dif t\\
&\ge \intx \frac{R}{C_v} (r^2 u_r^2+2u^2)\dif r \dif t - E^\frac{1}{2}(t) \int_0^t  \mathcal D(s)\dif s,
\end{align*}
where we use the fact that
\[
\|r(\rho_r, u_r, \theta_r, q_r)\|_{L^\infty}  \le CE^\frac{1}{2}(t).
\]
For the right hand side of \eqref{1.32},  using the boundary condition $u_t|_{\partial \Omega}=0$, we have
\begin{align*}
&-\intx \theta_t(r^2 u)_r \dif r \dif t =-\int_0^t \frac{\dif}{\dif t} \int_{\Omega} \theta (r^2 u)_r \dif r \dif t +\intx \theta(r^2 u)_{rt} \dif r \dif t \\
&\le -\intx \theta_r r^2 u_t \dif r \dif t +C(E_0+E^\frac{1}{2}(t) \int_0^t  \mathcal D(s)\dif s) \\
&\le \eta \intx r^2 u_t^2 \dif r \dif t +C(\eta) (E_0+E^\frac{1}{2}(t) \int_0^t  \mathcal D(s)\dif s).
\end{align*}
where $\eta$ is  a small constant to be determined later.

For the second term of the right hand side of \eqref{1.32}, one has
\begin{align*}
-\intx \frac{B}{\rho e_\theta} \theta_r (r^2 u)_r \dif r \dif t =-\intx \frac{B}{\rho e_\theta} \theta_r (r^2 u_r+2ru) \dif r \dif t \\
\le \frac{1}{2} \intx \frac{R}{C_v} (r^2 u_r^2+2u^2)\dif r \dif t +C(E_0+E^\frac{1}{2}(t) \int_0^t  \mathcal D(s)\dif s).
\end{align*}

Similarly, we have
\begin{align*}
&-\intx \frac{1}{\rho e_\theta} (q_r+\frac{2}{r}q) (r^2 u)_r \dif r \dif t \\
&\le \frac{1}{4} \intx \frac{R}{C_v} (r^2 u_r^2+2u^2)\dif r\dif t +C \intx (r^2 q_r^2+q^2)\dif r\dif t \\
&\le \frac{1}{4} \intx \frac{R}{C_v} (r^2 u_r^2+2u^2)\dif r\dif t +C(E_0+E^\frac{1}{2}(t) \int_0^t  \mathcal D(s)\dif s),
\end{align*}
and
\begin{align*}
&\intx \frac{1}{\rho e_\theta} \left( \frac{2}{\tau(\theta)}+\frac{4}{r} u\right) a(\theta) q^2 (r^2 u)_r \dif r \dif t \\
&\le \frac{1}{16} \intx \frac{R}{C_v} (r^2 u_r^2+2u^2)\dif r\dif t +C \intx (r^2 q^4)\dif r\dif t \\
&\le \frac{1}{16} \intx \frac{R}{C_v} (r^2 u_r^2+2u^2)\dif r\dif t +C(E_0+E^\frac{1}{2}(t) \int_0^t  \mathcal D(s)\dif s).
\end{align*}
In the same way, one has
\begin{align*}
&\intx \left(\mu(\theta)\left( 2u_r^2+\frac{4}{r^2}u^2-\frac{2}{3} \left(u_r+\frac{2}{r} u\right)^2\right)+\lambda(\theta)\left(u_r+\frac{2}{r} u\right)^2\right) (r^2 u)_r \dif r \dif t \\
&\le \frac{1}{16} \intx \frac{R}{C_v} (r^2 u_r^2+2u^2)\dif r\dif t +C(E_0+E^\frac{1}{2}(t) \int_0^t  \mathcal D(s)\dif s).
\end{align*}
Thus, we derive that
\begin{align}\label{1.33}
\frac{1}{8} \intx \frac{R}{C_v} (r^2 u_r^2+2u^2)\dif r\dif t \le \eta \intx r^2 u_t^2 \dif r \dif t +C(E_0+E^\frac{1}{2}(t) \int_0^t  \mathcal D(s)\dif s).
\end{align}
Now, multiplying the mass equation $\eqref{1.5}_1$ by $r^2 \rho_t$, one has
\begin{align*}
\intx r^2 \rho_t^2 \dif r\dif t =-\intx u\rho_r r^2 \rho_t \dif r \dif t -\intx \rho u_r r^2 \rho_t \dif r \dif t -\intx 2 r \rho u \rho_t \dif r \dif t\\
\le \frac{1}{2}\intx r^2 \rho_t^2 \dif r \dif t+C\intx r^2(u_r^2+2u^2)\dif r \dif t+E^\frac{1}{2}(t) \int_0^t  \mathcal D(s)\dif s,
\end{align*}
which,  combined with \eqref{1.33}, implies
\begin{align}\label{1.34}
\frac{1}{2} \intx r^2 \rho_t^2 \dif r \dif t \le C \eta \intx r^2 u_t^2\dif r\dif t +C(E_0+E^\frac{1}{2}(t) \int_0^t  \mathcal D(s)\dif s).
\end{align}

Multiplying the equation $\eqref{1.5}_2$ by $r^2 u_t$, we get{\small
\begin{align}\label{1.35}
\intx \rho r^2 u_t^2 \dif r\dif t=-\intx \rho u u_r r^2 u_t \dif r\dif t-\intx R \theta \rho_r r^2 u_t \dif r \dif t-\intx R \rho \theta_r r^2 u_t \dif r\dif t +\nonumber\\
\intx \left(\left(\frac{4}{3} \mu(\theta)+\lambda(\theta)\right)\left(u_{rr}+\frac{2}{r}u_r-\frac{2}{r^2} u\right)+\lambda^\prime(\theta)\theta_r\left(u_r+\frac{2}{r}u\right)+\frac{4}{3}\mu^\prime(\theta) \theta_r\left(u_r-\frac{u}{r}\right) \right)r^2 u_t.
\end{align}}

We estimate each term in \eqref{1.35} as follows.
First,  it is easy to see that
\begin{align*}
\intx \rho u u_r r^2 u_t \dif r \dif t \le  E^\frac{1}{2}(t) \int_0^t  \mathcal D(s)\dif s.
\end{align*}
Using the boundary condition $u|_{\partial \Omega}=0$, \eqref{1.33} and \eqref{1.34}, one has
\begin{align*}
&-\intx R\theta \rho_r r^2 u_t \dif r\dif t \\
&=-\int_0^t \frac{\dif}{\dif t} \int_{\Omega} R \theta \rho_r r^2 u \dif r \dif t +\intx R\theta_t \rho_r r^2 u \dif t \dif t +\intx R \theta \rho_{rt} r^2 u\dif r \dif t \\
&\le -\intx R\theta \rho_t (r^2 u)_r\dif r \dif t+C(E_0+E^\frac{1}{2}(t) \int_0^t  \mathcal D(s)\dif s)\\
&\le C \eta \intx r^2 u_t^2 \dif r \dif t +C(E_0+E^\frac{1}{2}(t) \int_0^t  \mathcal D(s)\dif s).
\end{align*}
Using \eqref{1.31}, we have
\begin{align*}
\intx R \rho \theta_r r^2 u_t \dif r \dif t \le \frac{1}{2} \int \rho  r^2 u_t^2 \dif r \dif t+C(E_0+E^\frac{1}{2}(t) \int_0^t  \mathcal D(s)\dif s).
\end{align*}
For the last term of \eqref{1.35}, one has
\begin{align*}
&\intx \left(\left(\frac{4}{3} \mu(\theta)+\lambda(\theta)\right)\left(u_{rr}+\frac{2}{r}u_r-\frac{2}{r^2} u\right)+\lambda^\prime(\theta)\theta_r\left(u_r+\frac{2}{r}u\right)\right.\\
&\qquad\qquad\qquad\qquad\left.+\frac{4}{3}\mu^\prime(\theta) \theta_r\left(u_r-\frac{u}{r}\right) \right)r^2 u_t \dif r \dif t\\
&\le \intx \left(\frac{4}{3} \mu(\theta)+\lambda(\theta)\right)\left((r^2u_r)_r-2u\right)u_t \dif r \dif t +CE^\frac{1}{2}(t) \int_0^t  \mathcal D(s)\dif s\\
&\le -\int_0^t \frac{\dif}{\dif t} \int_{\Omega} \left(\frac{4}{3} \mu(\theta)+\lambda(\theta)\right)(\frac{1}{2}r^2 u_r^2+u^2) \dif r \dif t +CE^\frac{1}{2}(t) \int_0^t  \mathcal D(s)\dif s\\
&\le C(E_0+E^\frac{1}{2}(t) \int_0^t  \mathcal D(s)\dif s).
\end{align*}
Now, we can choose $\eta$ sufficiently small and get
\begin{align}\label{1.36}
\intx r^2 u_t^2 \dif r \dif t \le C(E_0+E^\frac{1}{2}(t) \int_0^t  \mathcal D(s)\dif s).
\end{align}
This together with \eqref{1.33} and \eqref{1.34} imply
\begin{align}\label{1.37}
\intx ((r^2 u_r^2+2u^2)+r^2 \rho_t^2 )\dif r \dif t \le C(E_0+E^\frac{1}{2}(t) \int_0^t  \mathcal D(s)\dif s).
\end{align}

Until now, we have get the dissipation estimates of $r(u_t, u_r, \rho_t, \theta_r)$ and $u$.
Multiplying the momentum equation $\eqref{1.5}_2$ by $r^2 \rho_r$,  using \eqref{1.31}, \eqref{1.36} and mass equation $\eqref{1.5}_1$, we derive that{\small
\begin{align*}
&\intx R \theta r^2 \rho_r^2 \dif r \dif t=-\intx  \left( \rho u_t +\rho u u_r+R \rho \theta_r\right)r^2 \rho_r \dif r \dif t\\
&\qquad\qquad\qquad\qquad+\intx \left(\left(\frac{4}{3} \mu(\theta)+\lambda(\theta)\right)\left(u_{rr}+\frac{2}{r}u_r-\frac{2}{r^2} u\right)+\lambda^\prime(\theta)\theta_r\left(u_r+\frac{2}{r}u\right)\right.\\
&\qquad\qquad\qquad\qquad\qquad\qquad\qquad\qquad\qquad\qquad\qquad\qquad\left.+\frac{4}{3}\mu^\prime(\theta) \theta_r\left(u_r-\frac{u}{r}\right)\right) r^2 \rho_r \dif r \dif t\\
&\le \frac{1}{2}\intx  R\theta r^2 \rho_r^2 \dif r \dif t+ \intx \left(\frac{4}{3}\mu(\theta)+\lambda(\theta)\right)\left(u_{rr}+\frac{2}{r}u_r-\frac{2}{r^2} u\right) r^2 \rho_r \dif r \dif t\\
&\qquad\qquad\qquad\qquad +C(E_0+E^\frac{1}{2}(t) \int_0^t  \mathcal D(s)\dif s)\\
&= \frac{1}{2}\intx  R\theta r^2 \rho_r^2 \dif r \dif t+ \intx \left(\frac{4}{3}\mu(\theta)+\lambda(\theta)\right)\left(-\frac{1}{\rho}\right) \left( \rho_{tr}+u\rho_{rr}+2 \rho_r u_r+\frac{2}{r} u\rho_r\right) r^2 \rho_r \dif r \dif t \\ &\qquad\qquad\qquad\qquad+C(E_0+E^\frac{1}{2}(t) \int_0^t  \mathcal D(s)\dif s)\\
&\le \frac{1}{2}\intx  R\theta r^2 \rho_r^2 \dif r \dif t-\int_0^t \frac{\dif}{\dif t}\int_{\Omega}  \left(\frac{4}{3}\mu(\theta)+\lambda(\theta)\right)\frac{1}{2\rho} \rho_r^2 \dif r\dif t +C(E_0+E^\frac{1}{2}(t) \int_0^t  \mathcal D(s)\dif s)\\
&\le \frac{1}{2}\intx  R\theta r^2 \rho_r^2 \dif r \dif t+C(E_0+E^\frac{1}{2}(t) \int_0^t  \mathcal D(s)\dif s),
\end{align*}}
which gives
\begin{align}
\intx r^2 \rho_r^2 \dif r \dif t \le  C(E_0+E^\frac{1}{2}(t) \int_0^t  \mathcal D(s)\dif s). \label{1.38}
\end{align}
Finally, using the  energy equation $\eqref{1.5}_3$, \eqref{1.37} and Lemma \ref{le3}, we have
\begin{align}
\intx r^2 \theta_t^2 \dif r \dif t &\le \intx (r^2u_r^2+2u^2+r^2 q_r^2+2 q^2)\dif r \dif t+C E^\frac{1}{2}(t) \int_0^t  \mathcal D(s)\dif s \nonumber\\
&\le C(E_0+E^\frac{1}{2}(t) \int_0^t  \mathcal D(s)\dif s). \label{1.39}
\end{align}
 This finishes the proof of Lemma \ref{le5}.
\end{proof}

The following lemmas give the second-order a priori estimates.

 \begin{lemma}\label{le6}
 There exists a constant $C$ such that
 \begin{align}
 \int_{\Omega} \left( r^2 \rho_{tr}^2+(r^2 u_{tr}^2+2u_t^2)+r^2 \theta_{tr}^2 +\tau( r^2 q_{tr}^2+2q_t^2) \right) \dif r + \intx (r^2q_{tr}^2+2q_t^2)\dif r \dif t \nonumber\\
 +\left(\frac{4}{3} \mu+\lambda\right) \intx (r^2 u_{trr}^2+u_{tr}^2) \dif r \dif t\le C(E_0+E^\frac{1}{2}(t) \int_0^t  \mathcal D(s)\dif s+E^2(t)). \label{1.40}
 \end{align}
 \end{lemma}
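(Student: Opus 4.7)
The strategy is to mimic the energy argument of Lemma \ref{le3} applied to the $t$-differentiated system \eqref{1.10}. Taking $\partial_r$ of \eqref{1.10} (equivalently, $\partial_t$ of \eqref{1.19}), one obtains a system whose principal part coincides with that of \eqref{1.19} after replacing every variable by its $t$-derivative, with forcing by $\partial_r F_i$ ($i=1,\ldots,4$). These forcings consist of quadratic or higher combinations of second-order derivatives of $(\rho,u,\theta,q)$, which are already controlled by the energy-dissipation pair $(E,\mathcal D)$ through Lemmas \ref{le1}-\ref{le5}.

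Following the pattern of \eqref{1.20}-\eqref{1.27}, I would multiply the $(t,r)$-differentiated momentum equation by $r^2 u_{tr}$, the energy equation by $\frac{1}{\theta} r^2 \theta_{tr}$, the heat-flux equation by $\frac{1}{\kappa(\theta)\theta} r^2 q_{tr}$, and couple them with the $(t,r)$-differentiated mass equation to handle the pressure term. By the same cancellation structure as in Lemma \ref{le3}, one recovers time derivatives of $\|r\rho_{tr}\|_{L^2}^2$, $\|ru_{tr}\|_{L^2}^2$, $\|r\theta_{tr}\|_{L^2}^2$ and $\tau\|rq_{tr}\|_{L^2}^2$, the viscous contribution $\frac{\dif}{\dif t}\int_\Omega \frac{1}{2}(\frac{4}{3}\mu+\lambda) r^2 u_{trr}^2\,\dif r$ with dissipation $(\frac{4}{3}\mu+\lambda)\int_\Omega (\frac{1}{5} r^2 u_{trr}^2+u_{tr}^2)\,\dif r$, and the relaxation dissipation $\int_\Omega \frac{1}{2\kappa(\theta)\theta} r^2 q_{tr}^2\,\dif r$. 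The two cross-couplings $\int_\Omega \frac{1}{\theta}(r^2 q_{tr})_r \theta_{tr}\,\dif r$ and $-\int_\Omega \frac{1}{\theta}(r^2 \theta_r)_{rt} q_{tr}\,\dif r$ between the energy and the heat-flux equations cancel modulo terms bounded by $E^{1/2}\mathcal D$. To produce the remaining pieces $2u_t^2$ and $2q_t^2$ in the dissipation (and $\tau q_t^2$ in the energy), I would additionally test $\eqref{1.10}_2$ by $2u_t$ and $\eqref{1.10}_4$ by $\frac{2}{\kappa(\theta)\theta} q_t$, exactly as in \eqref{1.26}-\eqref{1.27}; the sign-indefinite term $-(\frac{4}{3}\mu+\lambda)\frac{2}{r^2}u_t^2$ arising from the first multiplier is absorbed by the positive counterpart $+(\frac{4}{3}\mu+\lambda)\frac{2}{r^2}u_t^2$ produced here.

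All contributions from $\partial_r F_i$ can be bounded by $CE^{1/2}(t)\mathcal D(t)+CE^2(t)$ via the weighted 1-D Sobolev embedding, the bound $\|r(\rho_r,u_r,\theta_r,q_r)\|_{L^\infty}^2\le CE(t)$, and the pointwise estimates \eqref{new1}-\eqref{new3}; quadratic pieces (e.g.\ $\rho_{tr}u_r u_{tr}$ type) reduce to $E^{1/2}\mathcal D$, while cubic and quartic pieces (such as $a(\theta)_t q^2$ in $F_3$ or $(\tau(\theta)\rho u)_t q q_{tr}$ in $F_4$) produce the $E^2$ residue after two uses of the $L^\infty$ bounds. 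Summation, integration in $t$, and smallness of $\delta$ absorb all $\eta$-type remainders, yielding \eqref{1.40}. The main obstacle is the initial datum $\|r\sqrt{\tau}q_{tr}(0)\|_{L^2}^2$, which must stay bounded uniformly in $\tau$: solving $\eqref{1.5}_4$ for $q_t$ at $t=0$ gives $\tau(\theta_0)\rho_0\, q_t(0)=-(q_0+\kappa(\theta_0)(\theta_0)_r)-\tau(\theta_0)\rho_0 u_0\big((q_0)_r+\tfrac{2}{r}q_0\big)$, so $\sqrt{\tau}\,r\,q_{tr}(0)$ equals $\tau^{-1/2}$ times $r\,\partial_r(q_0+\kappa(\theta_0)(\theta_0)_r)$ plus lower-order terms, and its $L^2$-norm is $O(1)$ by the compatibility assumption \eqref{compatibility}, hence $\le CE_0$. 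Finally, all boundary integrals at $r=1$ (and $r=2$ in the annular case) vanish thanks to $u_t|_{\partial\Omega}=q_t|_{\partial\Omega}=0$, which is precisely the content of \eqref{compatibility condition} with $k=1$.
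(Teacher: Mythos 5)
Your proposal follows essentially the same route as the paper: differentiate the system in $(t,r)$, test with $r^2u_{tr}$, $\tfrac{1}{\theta}r^2\theta_{tr}$, $\tfrac{1}{\kappa(\theta)\theta}r^2q_{tr}$, use the differentiated mass equation to convert the pressure term into $\tfrac{\dif}{\dif t}\int\tfrac{R\theta}{2\rho}r^2\rho_{tr}^2$, exploit the cancellation of the $\theta_{tr}$--$q_{tr}$ cross terms, and then add the tests of $\eqref{1.10}_2$ by $2u_t$ and $\eqref{1.10}_4$ by $\tfrac{2}{\kappa(\theta)\theta}q_t$ to absorb the sign-indefinite $-\tfrac{2}{r^2}u_t^2$ piece — exactly the paper's \eqref{1.43}, \eqref{1.45}, \eqref{1.47}, \eqref{1.49}, \eqref{1.50}. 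The only inaccuracy is the claimed ``viscous contribution $\tfrac{\dif}{\dif t}\int_\Omega\tfrac12(\tfrac43\mu+\lambda)r^2u_{trr}^2\,\dif r$'': the multiplier $r^2u_{tr}$ yields only the dissipation $(\tfrac43\mu+\lambda)\int(\tfrac15 r^2u_{trr}^2+u_{tr}^2)$, not that time derivative, but since the lemma does not assert control of $\|r u_{trr}\|_{L^2}^2$ at fixed time this is harmless.
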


 \begin{proof}
 Taking derivative with respect to $(t,r)$, one gets
 \begin{align}\label{1.41}
 \begin{cases}
 \rho_{ttr}+u \rho_{trr} +\rho u_{trr}+\frac{2}{r} \rho u_{rt}-\frac{2}{r^2} \rho u_t=H_1,\\
 \rho u_{ttr} +\rho u u_{trr}+p_{trr}-\left(\left(\frac{4}{3} \mu(\theta)+\lambda(\theta)\right)\left(u_{rr}+\frac{2}{r}u_r-\frac{2}{r^2} u\right)\right)_{tr}=H_2,\\
 \rho e_\theta \theta_{ttr}+ B(\rho, u, \theta, q) \theta_{trr}+p\left(u_r+\frac{2}{r} u\right)_{tr}+\left(q_r+\frac{2}{r} q\right)_{tr}=H_3,\\
 \tau(\theta) \rho (q_{ttr}+ u q_{trr})+q_{tr}+\kappa(\theta) \theta_{trr}=H_4,
 \end{cases}
 \end{align}
 where
 \begin{align*}
 &H_1:=-\left( \rho_t u_{rr}+2 \rho_r u_{rt}+2 u_r \rho_{tr}+u_t \rho_{rr}-\frac{2}{r^2}\rho_t u+\frac{2}{r} u\rho_{tr}+\frac{2}{r} \rho_t u_r+\frac{2}{r} \rho_r u_t\right),\\
& H_2:=-\left( \rho_t u_{tr}+\rho_r u_{tt}+u_t \rho_{tr}+(\rho u)_t u_{rr}+(\rho u)_r u_{tr}+(\rho u)_{tr} u_r\right)\\
&\qquad \qquad \qquad+\left(\lambda^\prime(\theta)\theta_r\left(u_r+\frac{2}{r}u\right)+\frac{4}{3}\mu^\prime(\theta) \theta_r\left(u_r-\frac{u}{r}\right)\right)_{tr},\\
 &H_3:=-\left((\rho e_\theta)_t \theta_{tr}+(\rho e_\theta)_r \theta_{tt}+(\rho e_\theta)_{tr}\theta_t+B_t \theta_{rr}+B_r \theta_{tr}+B_{tr}\theta_r+p_t(u_r+\frac{2}{r}u)_r\right.\\
 &\qquad \qquad \qquad\left.+p_r(u_r+\frac{2}{r} u)_t+p_{tr} \left((u_r+\frac{2}{r} u)\right)\right)+\left(\left(\frac{2}{\tau(\theta)}+\frac{4}{r} u\right) a(\theta) q^2\right)_{tr}\\
&\qquad \qquad \qquad+ \left(\mu(\theta)\left( 2u_r^2+\frac{4}{r^2}u^2-\frac{2}{3} \left(u_r+\frac{2}{r} u\right)^2\right)+\lambda(\theta)\left(u_r+\frac{2}{r} u\right)^2 \right)_{tr},\\
& H_4:=-\left( (\tau(\theta) \rho)_t q_{tr}+(\tau(\theta) \rho)_r q_{tt}+(\tau(\theta) \rho)_{tr} q_t+(\tau(\theta) \rho u)_t q_{rr}+(\tau(\theta \rho u)_r q_{tr}+(\tau(\theta) \rho u)_{tr} q_r \right.\\
& \qquad\qquad\left.+\kappa^\prime(\theta)(\theta_t \theta_{rr}+2\theta_r \theta_{tr})+\kappa^{\prime\prime} \theta_t \theta_r^2+\left( \tau(\theta) \rho \frac{2}{r} u q \right)_{tr}\right).
 \end{align*}

 Multiplying equation $\eqref{1.41}_2$ by $r^2 u_{tr}$, one gets
 \begin{align}\label{1.42}
& \int_{\Omega} \rho u_{ttr} r^2 u_{tr}\dif r  +\int_{\Omega} \rho u u_{trr} r^2 u_{tr}\dif r\nonumber\\
&+\int_{\Omega} \left(p_{trr} -\left(\left(\frac{4}{3} \mu(\theta)+\lambda(\theta)\right)\left(u_{rr}+\frac{2}{r}u_r-\frac{2}{r^2} u\right)\right)_{tr}\right) r^2 u_{tr}\dif r
 =\int_{\Omega} H_2 r^2 u_{tr}\dif r.
 \end{align}
 We estimate each term in the above equation as follows.

 First,
 \begin{align*}
 \int_{\Omega} \rho r^2 ( u_{ttr} u_{tr}+u  u_{trr} u_{tr}) \dif r =\int_{\Omega} \rho r^2 \left(  \left( \frac{1}{2} u_{tr}^2 \right)_t+u \left(\frac{1}{2} u_{tr}^2\right)_r\right) \dif r=\frac{\dif}{\dif t} \int_{\Omega} \frac{1}{2} \rho r^2 u_{tr}^2 \dif r.
 \end{align*}

 Second, using the boundary condition
 $$
 \left(p_{rt}-\left(\left(\frac{4}{3} \mu(\theta)+\lambda(\theta)\right)\left(u_{rr}+\frac{2}{r}u_r-\frac{2}{r^2} u\right)\right)_{t}\right)|_{\partial \Omega}=0,
 $$
 we have
 \begin{align*}
 \int_{\Omega} \left(p_{trr} -\left(\left(\frac{4}{3} \mu(\theta)+\lambda(\theta)\right)\left(u_{rr}+\frac{2}{r}u_r-\frac{2}{r^2} u\right)\right)_{tr}\right) r^2 u_{tr}\dif r  \\
 =-\int_{\Omega} \left(p_{tr} -\left(\left(\frac{4}{3} \mu(\theta)+\lambda(\theta)\right)\left(u_{rr}+\frac{2}{r}u_r-\frac{2}{r^2} u\right)\right)_{t}\right) (r^2 u_{tr})_r \dif r .
 \end{align*}
Using equation $\eqref{1.41}_1$, we have{\small
 \begin{align*}
 &-\int_{\Omega} p_{tr} (r^2 u_{tr})_r\dif r \\
 &=-\int_{\Omega} R\left( \theta \rho_{tr}+\rho \theta_{tr}+\theta_t \rho_r+\rho_t \theta_r\right) (r^2 u_{tr})_r \dif r\\
 &=-\int_{\Omega} R(\theta \rho_{tr}+\rho \theta_{tr}) (r^2 u_{tr})_r\dif r -\frac{\dif}{\dif t} \int_{\Omega} R(\theta_t \rho_r+\rho_t\theta_r)r^2u_{rr} \dif r+\int_{\Omega} R(\theta_t \rho_r+\rho_t\theta_r)_t r^2 u_{rr} \dif r\\
& \ge \int_{\Omega} R \theta \rho_{tr} \left(\frac{1}{\rho} r^2 \rho_{ttr}+\frac{u}{\rho} r^2 \rho_{trr}-2 u_t-\frac{1}{\rho} r^2 H_1\right) \dif r-\int_{\Omega} R\rho \theta_{tr} (r^2 u_{tr})_r  \\
&\qquad\qquad\qquad-\frac{\dif}{\dif t} \int_{\Omega} R(\theta_t \rho_r+\rho_t\theta_r)r^2u_{rr} \dif r- C E^\frac{1}{2}(t) \mathcal D(t)\\
& \ge \frac{\dif}{\dif t} \int_{\Omega} \frac{R\theta}{2\rho} r^2  \rho_{tr}^2 \dif r -\int_{\Omega} R\left( \left(\frac{\theta}{\rho} r^2\right)_t+\left( \frac{\theta}{\rho} u r^2 \right)_r \right) \frac{1}{2} \rho_{tr}^2 \dif r
 -\int_{\Omega} R\rho \theta_{tr} (r^2 u_{tr})_r \dif r \\
 &\qquad\qquad\qquad -\int_{\Omega} 2 R\theta \rho_{tr} u_t \dif r -\frac{\dif}{\dif t} \int_{\Omega} R(\theta_t \rho_r+\rho_t\theta_r)r^2u_{rr} \dif r - C E^\frac{1}{2}(t) \mathcal D(t)\\
& \ge  \frac{\dif}{\dif t} \int_{\Omega} \left(\frac{R\theta}{2\rho} r^2  \rho_{tr}^2  -R(\theta_t \rho_r+\rho_t\theta_r)r^2u_{rr}\right) \dif r   -\int_{\Omega} (R\rho \theta_{tr} (r^2 u_{tr})_r + 2 R\theta \rho_{tr} u_t )\dif r -  C E^\frac{1}{2}(t) \mathcal D(t).
 \end{align*}}
 On the other hand, using the boundary condition $u_t|_{\partial_\Omega}=0$, one has
 \begin{align*}
& \int_{\Omega}\left( \left(\frac{4}{3} \mu(\theta)+\lambda(\theta)\right)\left(u_{rr}+\frac{2}{r}u_r-\frac{2}{r^2} u\right)\right)_t (r^2 u_{tr})_r \dif r\\
  &\ge \int_{\Omega} \left(\frac{4}{3} \mu(\theta)+\lambda(\theta)\right)\left(u_{trr}+\frac{2}{r}u_{tr}-\frac{2}{r^2} u_t\right) (r^2 u_{trr}+2r u_{tr}) \dif r-CE^\frac{1}{2}(t) \mathcal D(t).\\
  &=\int_{\Omega} \left(\frac{4}{3} \mu(\theta)+\lambda(\theta)\right) \left( r^2 u_{trr}^2+4u_{tr}^2+4r u_{trr} u_{tr} -2 u_t u_{trr}-\frac{4}{r} u_t u_{tr}\right)\dif r-CE^\frac{1}{2}(t) \mathcal D(t).\\
  &\ge   \int_{\Omega} \left(\frac{4}{3} \mu(\theta)+\lambda(\theta)\right)\left(\frac{1}{5} r^2 u_{trr}^2+u_{tr}^2-\frac{2}{r^2} u_t^2\right) \dif r-C E^\frac{1}{2}(t) \mathcal D(t).
  \end{align*}
 Thus, we derive that
 \begin{align}\label{1.43}
 \frac{\dif }{\dif t} \int_{\Omega} \left(\frac{R\theta}{2\rho} r^2 \rho_{tr}^2-R(\theta_t \rho_r+\rho_t\theta_r)r^2u_{rr} +\frac{1}{2} \rho r^2 u_{tr}^2 \right) \dif r -\int_{\Omega} R \rho \theta_{tr} (r^2 u_{tr})_r\dif r \nonumber\\- \int_{\Omega} 2 R\theta \rho_{tr} u_t \dif r
 + \int_{\Omega} \left(\frac{4}{3} \mu(\theta)+\lambda(\theta)\right)\left(\frac{1}{5} r^2 u_{trr}^2+u_{tr}^2-\frac{2}{r^2} u_t^2\right) \dif r \le C E^\frac{1}{2}(t) \mathcal D(t).
 \end{align}
 Multiplying equation $\eqref{1.41}_3$ by $\frac{1}{\theta} r^2 \theta_{tr}$, and integrating the results, we obtain
 \begin{align}
 \int_{\Omega} \frac{\rho e_\theta}{\theta} \theta_{ttr} r^2 \theta_{tr} \dif r+\int_{\Omega} \frac{B}{\theta} \theta_{trr} r^2 \theta_{tr} \dif r+\int_{\Omega} R \rho\left( u_r+\frac{2}{r}u\right)_{rt} r^2 \theta_{tr} \dif r \nonumber \\
 +\int_{\Omega} \frac{1}{\theta} \left( q_r+\frac{2}{r} q\right)_{rt} r^2 \theta_{tr} \dif r =\int_{\Omega} H_3 \frac{1}{\theta} r^2 \theta_{tr} \dif r. \label{1.44}
 \end{align}
 We estimate each term in the above equation as follows.
 First,
 \begin{align*}
 \int_{\Omega} \frac{\rho e_\theta}{\theta} r^2 \theta_{ttr} \theta_{tr} \dif r  \ge \frac{\dif}{\dif t} \int_{\Omega} \frac{\rho e_\theta}{2\theta} r^2 \theta_{tr}^2 \dif r -C E^\frac{1}{2}(t) \mathcal D(t).
 \end{align*}
 Using the fact that $B|_{\partial \Omega}=0$, one has
 \begin{align*}
 \int_{\Omega} \frac{B}{\theta} \theta_{trr} r^2 \theta_{tr} \dif r =-\int_{\Omega} \left(\frac{B}{\theta} r^2\right)_r \frac{1}{2}\theta_{tr}^2 \dif r \ge -C E^\frac{1}{2}(t) \mathcal D(t).
 \end{align*}
 Second,
 \begin{align*}
 \int_{\Omega}  R \rho \left(u_r+\frac{2}{r}u\right)_{rt} r^2 \theta_{tr}\dif &r=\int_{\Omega} R \rho \left( u_{trr}+\frac{2}{r}u_{tr}-\frac{2}{r^2} u_t\right) r^2 \theta_{tr} \dif r\\
 &=\int_{\Omega} R \rho (r^2 u_{rt})_r \theta_{tr} \dif r-\int_{\Omega} 2 R \rho  u_t \theta_{tr}\dif r.
 \end{align*}
 Similarly, one get
 \begin{align*}
 \int_{\Omega} \frac{1}{\theta} \left(q_r+\frac{2}{r}q\right)_{rt} r^2 \theta_{tr}\dif r =\int_{\Omega} \frac{1}{\theta} (r^2 q_{tr})_r \theta_{tr}\dif r-\int_{\Omega} \frac{2}{\theta} q_t \theta_{tr}\dif r.
 \end{align*}
 One should pay attention to the term
 \[
  \int_{\Omega} H_3 \frac{1}{\theta} r^2 \theta_{tr} \dif r,
  \] since there will appear third-order terms. The typical  third-order term can be dealt with as follows:
  \begin{align*}
 & \int_{\Omega} \left(\left(\frac{4}{3}\mu(\theta)+\lambda(\theta)\right) u_r^2\right)_{tr} r^2 \theta_{tr} \dif r\\
&  \le C\int_{\Omega} \left(\frac{4}{3}\mu(\theta)+\lambda(\theta)\right) (u_r u_{trr}+u_{tr} u_{rr}) r^2 \theta_{tr}\dif r\\
 & \le \eta \int_{\Omega} \left(\frac{4}{3}\mu(\theta)+\lambda(\theta)\right) r^2u_{trr}^2 \dif r+ C(\eta) E^\frac{1}{2}(t) \mathcal D(t),
  \end{align*}
  where we use Young's inequality and the following estimate,
  \begin{align*}
  \| \left(\frac{4}{3}\mu(\theta)+\lambda(\theta)\right)  r^2u_{tr} u_{rr} \theta_{tr}\|_{L^1}&\le C \| \sqrt{\left(\frac{4}{3}\mu(\theta)+\lambda(\theta)\right) } u_{tr}\|_{L^\infty} \|ru_{rr}\|_{L^2} \|r\theta_{tr}\|_{L^2} \\
  &\le C E^\frac{1}{2} \mathcal D(t).
  \end{align*}
 Thus, we conclude that
 \begin{align}
 \frac{\dif}{\dif t} \int_{\Omega} \frac{\rho e_\theta}{2\theta} r^2 \theta_{tr}^2 \dif t+\int_{\Omega} R \rho (r^2 u_{tr})_r \theta_{tr} \dif r-\int_{\Omega} 2 R\rho u_t \theta_{tr} \dif r +\int_{\Omega} \frac{1}{\theta} (r^2 q_{rt})_r\theta_{tr}\dif r \nonumber\\
- \eta \int_{\Omega} \left(\frac{4}{3}\mu(\theta)+\lambda(\theta)\right) r^2u_{trr}^2 \dif r -\int_{\Omega} \frac{2}{\theta} q_t \theta_{tr} \dif r \le C E^\frac{1}{2}(t) \mathcal D(t). \label{1.45}
 \end{align}
 Multiplying equation $\eqref{1.41}_4$ by $\frac{1}{\kappa(\theta)\theta} r^2 q_{tr}$, one gets
 \begin{align}
 \int_{\Omega} \frac{\tau(\theta) }{\kappa(\theta)\theta} r^2 q_{tr} \rho (q_{ttr} + u q_{trr} )\dif r +\int_{\Omega} \frac{1}{\kappa(\theta)\theta}r^2 q_{tr}^2 \dif r+\int_{\Omega} \frac{1}{\theta} r^2 q_{tr} \theta_{trr} \dif r=\int_{\Omega} \frac{1}{\kappa(\theta)\theta} r^2 q_{tr} H_4.
 \end{align}
 We estimate again each term in the above equation separately.
First, using the mass equation, one has
 \begin{align*}
  \int_{\Omega} \frac{\tau(\theta)}{\kappa(\theta)\theta}  r^2  q_{tr} \rho( q_{ttr}+u q_{trr}) \dif r&= \int_{\Omega} \frac{\tau(\theta)}{\kappa(\theta)\theta} \rho r^2 ( (\frac{1}{2} q_{tr}^2)_t+u (\frac{1}{2}q_{tr}^2)_r) \dif r\\
  &\ge \frac{\dif}{\dif t} \int_{\Omega} \frac{\tau(\theta)}{\kappa(\theta)\theta} \rho r^2  \frac{1}{2} q_{tr}^2 \dif r -CE^\frac{1}{2}(t) \mathcal D(t).
  \end{align*}
 Second, using the fact that $\theta_{tr}|_{\partial \Omega}=0$, we have
 \begin{align*}
 \int_{\Omega} \frac{1}{\theta} r^2 q_{tr}\theta_{trr}   \dif r \ge -\int_{\Omega} \frac{1}{\theta} (r^2 q_{tr})_r  \theta_{tr} \dif r -CE^\frac{1}{2}(t) \mathcal D(t).
 \end{align*}
 Thus, we derive
 \begin{align}\label{1.47}
 \frac{\dif}{\dif t} \int_{\Omega} \frac{\tau(\theta)}{2\kappa(\theta)\theta} \rho r^2 q_{tr}^2 \dif r -\int_{\Omega} \frac{1}{\theta} (r^2 q_{tr})_r \theta_{tr} \dif r+\int_{\Omega} \frac{1}{\kappa(\theta)\theta} r^2 q_{tr}^2 \dif r \le CE^\frac{1}{2}(t) \mathcal D(t).
 \end{align}
Combining \eqref{1.43}, \eqref{1.45} and \eqref{1.47}, and choosing $\eta$ sufficiently small, we derive
\begin{align}
  &\frac{\dif}{\dif t} \int_{\Omega} \left( \frac{R\theta}{2\rho} r^2 \rho_{tr}^2 -R(\theta_t \rho_r+\rho_t\theta_r)r^2u_{rr}+\frac{1}{2}\rho r^2 u_{tr}^2+\frac{\rho e_\theta}{2\theta} r^2 \theta_{tr}^2+\frac{\tau(\theta)}{2\kappa(\theta)\theta} \rho r^2 q_{tr}^2 \right)\dif r \nonumber \\
  & -\int_{\Omega}  \left(2R u_t (\theta \rho_{tr}+\rho \theta_{tr})+\frac{2}{\theta} q_t\theta_{tr}\right)\dif r
+\int_{\Omega} \frac{1}{\kappa(\theta)\theta} r^2 q_{tr}^2 \dif r  \nonumber\\
&+ \int_{\Omega} \left(\frac{4}{3} \mu(\theta)+\lambda(\theta)\right)\left(\frac{1}{10} r^2 u_{trr}^2+u_{tr}^2-\frac{2}{r^2} u_t^2\right) \dif r \le CE^\frac{1}{2}(t) \mathcal D(t). \label{1.48}
 \end{align}
 From the momentum equation, one has
 \begin{align*}
 \rho u_{tt}+\rho u u_{tr}+p_{tr}=-(\rho_t u_t+(\rho u)_t u_r) \qquad\qquad\qquad\qquad\qquad\\
+ \left(\left(\frac{4}{3} \mu(\theta)+\lambda(\theta)\right)\left(u_{rr}+\frac{2}{r}u_r-\frac{2}{r^2} u\right)+\lambda^\prime(\theta)\theta_r\left(u_r+\frac{2}{r}u\right)+\frac{4}{3}\mu^\prime(\theta) \theta_r\left(u_r-\frac{u}{r}\right)\right)_t.
 \end{align*}
 Multiplying the above equation by $2 u_t$ yields{\small
 \begin{align}\label{1.49}
 \frac{\dif}{\dif t} \int_{\Omega} \rho u_t^2\dif r+\int_{\Omega} 2 u_t R(\rho \theta_{tr}+\theta \rho_{tr})\dif r
 +\int_{\Omega} \left(\frac{4}{3} \mu(\theta)+\lambda(\theta)\right)\left(2u_{tr}^2+\frac{2}{r^2} u_t^2 \right)\dif r \le CE^\frac{1}{2}(t) \mathcal D(t),
 \end{align}}
 where we use the fact that{\small
\begin{align*}
&\int_{\Omega} \left(\left(\frac{4}{3} \mu(\theta)+\lambda(\theta)\right)\left(u_{rr}+\frac{2}{r}u_r-\frac{2}{r^2} u\right)+\lambda^\prime(\theta)\theta_r\left(u_r+\frac{2}{r}u\right)+\frac{4}{3}\mu^\prime(\theta) \theta_r\left(u_r-\frac{u}{r}\right) \right)_t2 u_t \dif r\\
&\le -\int_{\Omega} \left(\frac{4}{3} \mu(\theta)+\lambda(\theta)\right)\left(2u_{tr}^2+\frac{2}{r^2} u_t^2 \right)\dif r+CE^\frac{1}{2}(t) \mathcal D(t).
\end{align*}}
 Similarly, taking the derivative with respect to $t$ in equation $\eqref{1.5}_4$, and multiplying the result by $\frac{2}{\kappa(\theta)\theta} q_t$, one obtains
 \begin{align}\label{1.50}
 \frac{\dif}{\dif t} \int_{\Omega} \frac{\tau(\theta)}{\kappa(\theta)\theta} \rho q_t^2 \dif r+\int_{\Omega} \frac{2}{\kappa(\theta)\theta} q_t^2\dif r+\int_{\Omega} \frac{2}{\theta} \theta_{rt}q_t \dif r \le CE^\frac{1}{2}(t) \mathcal D(t).
 \end{align}
 Therefore, by using \eqref{1.48}, \eqref{1.49} and \eqref{1.50}, one derives{\small
\begin{align}
  \frac{\dif}{\dif t} \int_{\Omega} \left(\frac{R\theta}{2\rho} r^2 \rho_{tr}^2 -R(\theta_t \rho_r+\rho_t\theta_r)r^2u_{rr} + \frac{1}{2}\rho( r^2 u_{tr}^2+2u_t^2)+\frac{\rho e_\theta}{2\theta} r^2 \theta_{tr}^2+\frac{\tau(\theta)}{2\kappa(\theta)\theta} \rho (r^2 q_{tr}^2+2q_t^2) \right)\dif r \nonumber\\
+ \int_{\Omega} \left(\frac{4}{3} \mu(\theta)+\lambda(\theta)\right)\left(\frac{1}{10} r^2 u_{trr}^2+3u_{tr}^2\right) \dif r +\int_{\Omega} \frac{1}{\kappa(\theta)\theta} (r^2 q_{tr}^2+2q_t^2) \dif r \le CE^\frac{1}{2}(t) \mathcal D(t). \label{1.51}
 \end{align}}
 Integrating this over $(0, t)$, we get the desired result. The proof of Lemm \ref{le6} is finished.
 \end{proof}
  Similarly, we can derive
 \begin{align}
  \frac{\dif}{\dif t} \int_{\Omega}  \left( \frac{1}{2}\rho r^2 u_{tt}^2+\frac{R\theta}{2\rho} r^2 \rho_{tt}^2+\frac{\rho e_\theta}{2\theta} r^2 \theta_{tt}^2+\frac{\tau(\theta)}{2\kappa(\theta)\theta} \rho r^2 q_{tt}^2 \right)\dif r \nonumber\\
+ \int_{\Omega} \left(\frac{4}{3} \mu(\theta)+\lambda(\theta)\right)\left(r^2 u_{ttr}^2\right) \dif r +\int_{\Omega} \frac{1}{\kappa(\theta)\theta} r^2 q_{tt}^2 \dif r \le \frac{C}{\tau} E^\frac{1}{2}(t) \mathcal D(t). \label{1.52}
 \end{align}
Note that the factor $\frac{1}{\tau}$ in \eqref{1.52} arises from the $\tau^2$ weight assigned to $\|r q_{tt}\|_{L^2}^2$ in the definition of $\mathcal{D}(t)$. Furthermore, motivated by the definition of $E(t)$, we integrate \eqref{1.52} and multiply the resulting equation by $\tau^2$ to establish the following lemma.
  \begin{lemma}\label{le7}
 There exists a constant $C$ such that
 \begin{align}
 \int_{\Omega} \tau^2 \left( r^2 \rho_{tt}^2+r^2 u_{tt}^2+r^2 \theta_{tt}^2 +\tau r^2 q_{tt}^2 \right) \dif r + \intx  \tau^2 r^2q_{tt}^2 \dif r \dif t\nonumber\\
 +   \left(\frac{4}{3} \mu +\lambda \right) \intx \left(r^2 u_{ttr}^2\right) \dif r \dif t
 \le C(E_0+E^\frac{1}{2}(t) \int_0^t  \mathcal D(s)\dif s). \label{1.53}
 \end{align}
 \end{lemma}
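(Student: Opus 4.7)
The plan is to derive \eqref{1.52} in parallel with Lemma~\ref{le6}, then integrate in time and multiply by $\tau^2$. The derivation mirrors the chain leading to \eqref{1.51}, but with two time derivatives replacing the mixed $\partial_t\partial_r$.

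First, differentiate \eqref{1.5} twice in $t$ to obtain evolution equations for $(\rho_{tt},u_{tt},\theta_{tt},q_{tt})$ of the form
\begin{align*}
&\rho_{ttt}+u\rho_{ttr}+\rho u_{ttr}+\tfrac{2}{r}\rho u_{tt}=K_1,\\
&\rho u_{ttt}+\rho u u_{ttr}+p_{ttr}-\left(\left(\tfrac{4}{3}\mu(\theta)+\lambda(\theta)\right)\left(u_{rr}+\tfrac{2}{r}u_r-\tfrac{2}{r^2}u\right)\right)_{tt}=K_2,\\
&\rho e_\theta\theta_{ttt}+B\theta_{ttr}+p\left(u_r+\tfrac{2}{r}u\right)_{tt}+\left(q_r+\tfrac{2}{r}q\right)_{tt}=K_3,\\
&\tau(\theta)\rho(q_{ttt}+uq_{ttr})+q_{tt}+\kappa(\theta)\theta_{ttr}=K_4,
\end{align*}
where $K_1,\ldots,K_4$ collect all commutators and nonlinear corrections. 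The boundary identities $u_{tt}|_{\partial\Omega}=q_{tt}|_{\partial\Omega}=0$ follow from the compatibility conditions \eqref{compatibility condition} combined with one time-differentiation of $\eqref{1.5}_2$ and $\eqref{1.5}_4$.

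Next, multiply the momentum equation by $r^2u_{tt}$, the energy equation by $r^2\theta_{tt}/\theta$, and the Cattaneo--Christov equation by $r^2q_{tt}/(\kappa(\theta)\theta)$, and integrate over $\Omega$. The integration-by-parts and cancellation structure are identical to \eqref{1.43}, \eqref{1.45}, \eqref{1.47} with subscript $tr$ replaced by $tt$: the principal parts produce the time-derivative of the energy displayed in \eqref{1.52} plus the dissipation $\int_\Omega(\tfrac{4}{3}\mu(\theta)+\lambda(\theta))r^2u_{ttr}^2\,\dif r+\int_\Omega r^2q_{tt}^2/(\kappa(\theta)\theta)\,\dif r$; the cross terms $R\rho\theta_{ttr}u_{tt}$ (from the $u_{tt}$-multiplied identity) and $\theta_{ttr}q_{tt}/\theta$ (from the other two) cancel between the three identities, while the twice time-differentiated mass equation supplies $\rho_{ttt}$ needed to convert $\int R\theta\rho_{ttr}u_{tt}\,\dif r$ into $\frac{\dif}{\dif t}\int\frac{R\theta}{2\rho}r^2\rho_{tt}^2\,\dif r$. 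The forcings $\int_\Omega K_i\cdot(\cdot)_{tt}\,\dif r$ are bounded by Young's inequality together with the $L^\infty$ estimates $\|(u,u_r,\theta_r,q,q_r,\ldots)\|_{L^\infty}\le CE^{1/2}(t)$ and the quantities already controlled by Lemmas~\ref{le1}--\ref{le6}; the worst viscous cubic contributions $(\tfrac{4}{3}\mu+\lambda)u_r u_{ttr}\cdot r^2\theta_{tt}/\theta$ are absorbed by a small fraction $\eta$ of $(\tfrac{4}{3}\mu+\lambda)\|ru_{ttr}\|_{L^2}^2$ on the left. Terms carrying $q_{tt}$ without its natural weight cost one factor $1/\tau$, since $\mathcal{D}(t)$ weights $\|rq_{tt}\|_{L^2}^2$ by $\tau^2$; this generates the $1/\tau$ on the right of \eqref{1.52}.

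Finally, integrate \eqref{1.52} over $(0,t)$ and multiply by $\tau^2$. The product $\tau^2\cdot(1/\tau)=\tau\le 1$ is absorbed into $C$, and the initial values $(\rho_{tt},u_{tt},\theta_{tt},q_{tt})|_{t=0}$ are --- by substitution into \eqref{1.5} --- polynomial expressions in the initial data and their spatial derivatives, so that the $\tau^2$-weighted contribution at $t=0$ is controlled by $E(0)$ via the corresponding term in the definition of $E$. The main obstacle is the careful bookkeeping of the cubic and quartic terms in $K_2,K_3,K_4$: every product must be checked to produce at most one factor $1/\tau$ so that the multiplication by $\tau^2$ leaves a harmless positive power of $\tau$. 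Particular care is required for products such as $(\tau(\theta)\rho)_t q_{tr}\cdot q_{tt}$ in $K_4$ and the contributions from $(2a(\theta)q^2/\tau(\theta))_{tt}$ in $K_3$, which are the only places where higher negative powers of $\tau$ could a priori appear; the structure of the constitutive relations and the bounds \eqref{new2}, \eqref{new3} ensure that they do not.
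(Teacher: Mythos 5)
Your proposal follows essentially the same route as the paper: the authors likewise obtain the twice-time-differentiated energy identity \eqref{1.52} ``similarly'' to the computation of Lemma \ref{le6}, note that the $\frac{1}{\tau}$ on its right-hand side comes from the $\tau^2$ weight on $\|rq_{tt}\|_{L^2}^2$ in $\mathcal D(t)$, and then integrate in time and multiply by $\tau^2$, absorbing $\tau^2\cdot\tau^{-1}=\tau\le 1$ into the constant. Your additional bookkeeping of the boundary conditions, the cross-term cancellations, and the negative powers of $\tau$ in $K_3,K_4$ is consistent with, and in fact more explicit than, what the paper records.
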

Furthermore, we have
 \begin{lemma}\label{le8}
 There exists a constant $C$ such that{\small
 \begin{align}
 \int_{\Omega} \left( r^2 \rho_{rr}^2+(r^2 u_{rr}^2+2u_r^2)+r^2 \theta_{rr}^2 +\tau( r^2 q_{rr}^2+2q_r^2) \right) \dif r
 \le C(E_0+E^\frac{1}{2}(t) \int_0^t  \mathcal D(s)\dif s+E^2(t)). \label{1.54}
 \end{align}}
 and
 \begin{align}\label{1.54-1}
 \int_{\Omega} \left(\frac{4}{3} \mu+\lambda\right)^2r^2 (u_{rrr}^2+\tau^2 u_{trr}^2) \dif r \le C(E_0+E^\frac{1}{2}(t) \int_0^t  \mathcal D(s)\dif s+E^2(t)).
 \end{align}
 \end{lemma}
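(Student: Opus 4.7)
The plan is to derive all four second-order spatial estimates algebraically by rearranging the system \eqref{1.5}, using the time-derivative bounds already established in Lemmas \ref{le2}, \ref{le5}, \ref{le6} and \ref{le7} as ``data''. The idea is that each second spatial derivative can be solved for from one of the four equations; the challenge is that the four estimates are coupled and the momentum equation produces an awkward factor of $(\frac{4}{3}\mu+\lambda)$.

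First, I would treat $u_{rr}$. Differentiating the mass equation $\eqref{1.5}_1$ in $r$ (see $\eqref{1.19}_1$) and solving algebraically,
\begin{equation*}
\rho u_{rr}=-\rho_{tr}-u\rho_{rr}-2u_r\rho_r-\tfrac{2}{r}\rho u_r-\tfrac{2}{r}\rho_r u+\tfrac{2}{r^2}\rho u.
\end{equation*}
Multiplying by $r$, taking $L^2$ norms, and using $\|ru\|_{L^\infty}+\|ru_r\|_{L^\infty}\le CE^{1/2}$ together with Lemma \ref{le6}, this yields $\|ru_{rr}\|_{L^2}^2\le C(\|r\rho_{tr}\|_{L^2}^2 + E(t)\|r\rho_{rr}\|_{L^2}^2 + \text{l.o.t.})$. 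Next, for $\rho_{rr}$, I solve $\eqref{1.5}_2$ for $R\theta\rho_r$ and differentiate once more in $r$; after using $\eqref{1.41}_2$-type algebra, the only top-order pieces on the right are $\rho u_{tr}$, $\rho\theta_{rr}$, and $(\frac{4}{3}\mu+\lambda)u_{rrr}$. The last contributes at most $(\frac{4}{3}\mu+\lambda)^2\|ru_{rrr}\|_{L^2}^2\le E(t)$ after squaring, which is exactly what the energy norm controls. Hence $\|r\rho_{rr}\|_{L^2}^2$ is bounded by $C(E_0+E^{1/2}(t)\int_0^t\mathcal D+E^2(t)) + C\|r\theta_{rr}\|_{L^2}^2$.

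For $\theta_{rr}$ I rewrite $\eqref{1.5}_4$ as $\kappa(\theta)\theta_r=-q-\tau(\theta)\rho(q_t+uq_r+\frac{2}{r}uq)$ and differentiate in $r$. This gives
\begin{equation*}
\kappa(\theta)\theta_{rr}=-\kappa'(\theta)\theta_r^2-q_r-(\tau(\theta)\rho)_r(q_t+uq_r+\tfrac{2}{r}uq)-\tau(\theta)\rho\bigl(q_{tr}+u_rq_r+uq_{rr}+(\tfrac{2}{r}uq)_r\bigr).
\end{equation*}
The top-order contributions on the right are $q_r$, $\tau q_{tr}$ and $\tau u\,q_{rr}$; the first two are in $\mathcal D(t)$ and Lemma \ref{le6}, and the last has a small multiplier $\tau\|u\|_{L^\infty}\le C\tau E^{1/2}$. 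Finally, for $q_{rr}$ I solve $\eqref{1.5}_3$ for $q_r$ and differentiate in $r$; since $e_\theta$, $B$, $p$, $a(\theta)$, etc.\ are smooth and bounded, $q_{rr}$ is bounded by $\|r\theta_{tr}\|_{L^2}$, $\|ru_{rr}\|_{L^2}$, $\|r\theta_{rr}\|_{L^2}$, and cubic terms. Combining the four inequalities for $(\|ru_{rr}\|,\|r\rho_{rr}\|,\|r\theta_{rr}\|,\|rq_{rr}\|)$ gives a $4\times 4$ coupled system of the form $X\le C Y + (\text{small})X$, where ``small'' comes from $E^{1/2}(t)$, $\tau\le 1$, or $\|u\|_{L^\infty}$, and $Y$ is the RHS of \eqref{1.54}. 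Absorbing the small coefficients (possible since $E(t)<\delta$ and $\tau\le 1$) closes the system.

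For the second part \eqref{1.54-1}, I differentiate the momentum equation $\eqref{1.5}_2$ in $r$ and solve algebraically for $(\frac{4}{3}\mu(\theta)+\lambda(\theta))u_{rrr}$: every other top-order term on the right involves $\rho_{rr}$, $\theta_{rr}$, $u_{rr}$, $u_{tr}$, or lower derivatives, all already bounded by the first part and Lemma \ref{le6}. Squaring and using that $(\frac{4}{3}\mu+\lambda)^2 r^2(u_{rr})^2\le C r^2(u_{rr})^2$ (no extra smallness needed, since the factor sits on the left), gives the $u_{rrr}$ bound. The $\tau u_{trr}$ estimate is obtained analogously by applying $\partial_t$ to $\eqref{1.5}_2$ and solving for $(\frac{4}{3}\mu+\lambda)u_{trr}$; now the multiplier $\tau$ matches the factor $\tau$ appearing in $\|r(\rho_{tt},u_{tt},\theta_{tt},\sqrt\tau q_{tt})\|$ from Lemma \ref{le7}, so every top-order quantity ($\rho_{ttr}$-type terms, $\rho_{tr}$-type, $p_{tr}$) is absorbed with factor $\tau$.

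The main obstacle I anticipate is the coupling between $\rho_{rr}$ and $\theta_{rr}$ (through $R\rho\theta_{rr}$ in the differentiated momentum equation, and through $\tau\rho u q_{rr}$ feeding into $\theta_{rr}$): one must verify that the circular dependencies produce only a contraction, which works only because $E(t)<\delta$ and $\tau\le 1$. Careful tracking of the $(\frac{4}{3}\mu+\lambda)u_{rrr}$ term, which is allowed to appear in the $\rho_{rr}$ bound but must not reappear uncontrolled, is the delicate point.
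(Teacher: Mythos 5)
Your overall architecture matches the paper's: you obtain $u_{rr}$ from the $r$-differentiated mass equation (with the $u\rho_{rr}$ term absorbed into $E^2$ via $\|u\|_{L^\infty}\le CE^{1/2}$), $\theta_{rr}$ from the Cattaneo equation $\eqref{1.19}_4$, $q_{rr}$ from the energy equation $\eqref{1.19}_3$, and \eqref{1.54-1} by solving the $r$- and $t$-differentiated momentum equations for $(\frac{4}{3}\mu+\lambda)u_{rrr}$ and $\tau(\frac{4}{3}\mu+\lambda)u_{trr}$. Those four steps are essentially the paper's and are sound.

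The gap is in your $\rho_{rr}$ step. You isolate $R\theta\rho_{rr}$ from $\eqref{1.19}_2$ and dispose of the viscous remainder by claiming $(\frac{4}{3}\mu+\lambda)^2\|ru_{rrr}\|_{L^2}^2\le E(t)$, ``which is exactly what the energy norm controls.'' A bound by $E(t)$ is useless here: the lemma must deliver $C(E_0+E^{\frac12}(t)\int_0^t\mathcal D(s)\dif s+E^2(t))$, and a stray $+CE(t)$ on the right --- with $C$ of order one, since $(\frac{4}{3}\mu+\lambda)u_{rrr}$ enters the momentum equation with coefficient exactly $1$ --- cannot be absorbed when the lemmas are summed to prove Proposition \ref{pro}; the resulting $E\le CE+\cdots$ is vacuous. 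Nor can you invoke \eqref{1.54-1} to upgrade the bound, because your own derivation of \eqref{1.54-1} solves the same equation for $(\frac{4}{3}\mu+\lambda)u_{rrr}$ and hence needs $\|r\rho_{rr}\|_{L^2}$ first: the $\rho_{rr}\leftrightarrow(\frac{4}{3}\mu+\lambda)u_{rrr}$ block of your $4\times4$ system has $O(1)$ off-diagonal entries, so the claimed contraction ``$X\le CY+(\text{small})X$'' fails precisely there (the genuinely small couplings are only those involving $u$, $\tau$, or $E^{1/2}$). The paper's resolution is structural, not a size estimate: before testing against $r^2\rho_{rr}$, it substitutes $u_{rr}+\frac{2}{r}u_r-\frac{2}{r^2}u=-\frac{1}{\rho}\left(\rho_{tr}+u\rho_{rr}+2\rho_ru_r+\frac{2}{r}u\rho_r\right)$ from the mass equation, so that the viscous term becomes
\begin{align*}
\int_{\Omega}\left(\left(\tfrac{4}{3}\mu(\theta)+\lambda(\theta)\right)\left(-\tfrac{1}{\rho}\right)\left(\rho_{tr}+u\rho_{rr}+\cdots\right)\right)_r r^2\rho_{rr}\dif r
\le -\frac{\dif}{\dif t}\int_{\Omega}\left(\tfrac{4}{3}\mu(\theta)+\lambda(\theta)\right)\tfrac{1}{2\rho}r^2\rho_{rr}^2\dif r+\cdots,
\end{align*}
a total time derivative (plus a transport term handled by integration by parts), after which Gronwall yields \eqref{1.54} without ever needing a bound on $u_{rrr}$. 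You need this substitution, or an equivalent device, to close the argument; as written, the first part of the lemma is not proved.
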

\begin{proof}
From equation $\eqref{1.19}_1$, we have
\begin{align}\label{1.55}
\int_{\Omega} \rho^2 r^2 \left(u_{rr}+\frac{2}{r}u_r-\frac{2}{r^2} u\right)^2 \dif r =\int_{\Omega} r^2 \left(-\rho_{tr}-u \rho_{rr}+G_1\right)^2 \dif r.
\end{align}
For the left-hand side of the above equation, one has
\begin{align*}
&\int_{\Omega} \rho^2 r^2 \left( u_{rr}+\frac{2}{r}u_r-\frac{2}{r^2} u\right)^2 \dif r\\
&=\int_{\Omega} \rho^2 r^2\left( u_{rr}^2+\frac{4}{r^2} u_r^2+\frac{4}{r^4} u^2+\frac{4}{r} u_ru_{rr}-\frac{4}{r^2} u u_{rr}-\frac{8}{r^3} u u_r\right) \dif r\\
&=\int_{\Omega} \rho^2 \left( r^2 u_{rr}^2+8 u_r^2+4 u_r u_{rr}\right) \dif r \ge \frac{1}{4} \int_{\Omega} \left(\frac{1}{3} r^2 u_{rr}^2+2 u_r^2\right)\dif r ,
\end{align*}
where we used the $\varepsilon$-Young inequality
\[
4 r u_r u_{rr} \le \varepsilon (ru_{rr})^2+\frac{1}{4\varepsilon} (4u_r)^2,
\]
and take $\varepsilon:=\frac{2}{3}$.

For the right hand side of \eqref{1.54}, using Lemma \ref{le6}, one has
\begin{align*}
\int_{\Omega} r^2 \left(-\rho_{tr}-u \rho_{rr}+G_1\right)^2 \dif r  \le C(E_0+E^\frac{1}{2}(t) \int_0^t  \mathcal D(s)\dif s+E^2(t)),
\end{align*}
where we use the fact
\begin{align*}
\int_{\Omega} r^2 G_1^2 \dif r \le \int_{\Omega} r^2 (-2 \rho_r u_r-\frac{2}{r} \rho_r u)^2 \dif r \le CE^2(t).
\end{align*}
Thus, we derive that
\begin{align}\label{1.56}
\int_{\Omega} (r^2u_{rr}^2+u_r^2)\dif r \le C(E_0+E^\frac{1}{2}(t) \int_0^t  \mathcal D(s)\dif s+E^2(t)).
\end{align}
From equation $\eqref{1.19}_4$, using Lemmas \ref{le2}-\ref{le6}, one immediately gets
\begin{align}\label{1.57}
\int_{\Omega} \kappa^2(\theta) r^2 \theta_{rr}^2 \dif r &\le C \int_{\Omega} \left( \tau^2 r^2 q_{tr}^2+\tau^2 u^2 r^2 q_{rr}^2+r^2 q_r^2+r^2 G_4^2\right)\dif r \nonumber\\
 &\le C(E_0+E^\frac{1}{2}(t) \int_0^t  \mathcal D(s)\dif s+E^2(t)).
\end{align}
Multiplying the momentum equation $\eqref{1.19}_2$ by $r^2 \rho_{rr}$,  using \eqref{1.57}, Lemma \ref{le6} and the mass equation $\eqref{1.5}_1$, we derive{\small
\begin{align*}
&\int_{\Omega} R \theta r^2 \rho_{rr}^2 \dif r =-\int_{\Omega}  \left( \rho u_{tr} +\rho u u_{rr}+R \rho \theta_{rr}+2R\rho_r\theta_r-G_2\right)r^2 \rho_{rr} \dif r \\
&\qquad\qquad+\int_{\Omega} \left((\frac{4}{3} \mu(\theta)+\lambda(\theta))\left(u_{rr}+\frac{2}{r} u_r-\frac{2}{r^2} u\right)\right)_r r^2 \rho_{rr}\dif r\\
&\le \frac{1}{2}\int_{\Omega}  R\theta r^2 \rho_{rr}^2 \dif r + \int_{\Omega} \left(\frac{4}{3}\mu(\theta)+\lambda(\theta)\right)\left(u_{rr}+\frac{2}{r}u_r-\frac{2}{r^2} u\right)_r r^2 \rho_{rr} \dif r
 \\&\qquad\qquad+C(E_0+E^\frac{1}{2}(t) \int_0^t  \mathcal D(s)\dif s+E^2(t))\\
&= \frac{1}{2}\int_{\Omega}  R\theta r^2 \rho_{rr}^2 \dif r + \int_{\Omega} \left(\frac{4}{3}\mu(\theta)+\lambda(\theta)\right)\left(\left(-\frac{1}{\rho}\right) \left( \rho_{tr}+u\rho_{rr}
+2 \rho_r u_r+\frac{2}{r} u\rho_r\right)\right)_r r^2 \rho_{rr} \dif r \\
&\qquad\qquad+C(E_0+E^\frac{1}{2}(t) \int_0^t  \mathcal D(s)\dif s+E^2(t))\\
&\le \frac{1}{2}\int_{\Omega}  R\theta r^2 \rho_{rr}^2 \dif r- \frac{\dif}{\dif t}\int_{\Omega}  \left(\frac{4}{3}\mu(\theta)+\lambda(\theta)\right)\frac{1}{2\rho} \rho_{rr}^2 \dif r+C(E_0+E^\frac{1}{2}(t) \int_0^t  \mathcal D(s)\dif s+E^2(t)).
\end{align*}}
Therefore, Gronwall's inequality implies that
\begin{align}
\int_{\Omega} r^2 \rho_{rr}^2 \dif r \le C(E_0+E^\frac{1}{2}(t) \int_0^t  \mathcal D(s)\dif s+E^2(t)).
\end{align}
Finally, from equation $\eqref{1.19}_3$, one has
\begin{align} \label{1.58}
\int_{\Omega} r^2 \left(q_r+\frac{2}{r} q\right)_r^2 \dif r =\int_{\Omega} r^2 \left(G_3-\rho e_\theta \theta_{tr}-B \theta_{rr}-p\left(u_r+\frac{2}{r}u\right)_r \right)^2 \dif r.
\end{align}
Similar to \eqref{1.55}, we have
\begin{align*}
\int_{\Omega} r^2 \left(q_r+\frac{2}{r} q\right)_r^2 \dif r \ge \int_{\Omega} (\frac{1}{3} r^2 q_{rr}^2+2q_r^2) \dif r.
\end{align*}
 While, using \eqref{1.56} and Lemma \ref{le6}, one has
 \begin{align*}
& \int_{\Omega} r^2 \left(G_3-\rho e_\theta \theta_{tr}-B \theta_{rr}-p\left(u_r+\frac{2}{r}u\right)_r \right)^2 \dif r\\
& \le C\int_{\Omega} \left(r^2(G_3^2+\theta_{tr}^2+B^2\theta_{rr}^2)+r^2 u_{rr}^2+u_r^2\right)\dif r \\
& \le  C(E_0+E^\frac{1}{2}(t) \int_0^t  \mathcal D(s)\dif s+E^2(t)).
 \end{align*}
So, we derive that
\begin{align}\label{1.59}
\int_{\Omega} ( r^2 q_{rr}^2+q_r^2) \dif r \le C(E_0+E^\frac{1}{2}(t) \int_0^t  \mathcal D(s)\dif s+E^2(t)).
\end{align}
By virtue of \eqref{1.56}-\eqref{1.59}, the estimate \eqref{1.54} in Lemma \ref{le8} is verified. Furthermore, combining the second equations of \eqref{1.10} and \eqref{1.19} with Lemmas \ref{le2}-\ref{le6}, we directly derive inequality \eqref{1.54-1}. This completes the proof of Lemma \ref{le8}.
\end{proof}
 \begin{lemma}\label{le9}
There exists a constant $C$ such that
\begin{align}\label{1.60}
\intx \left( r^2 | D^2(\rho, u, \theta)|^2+r^2 q_{rr}^2+(u_r^2+u_t^2+q_r^2)\right)\dif r \dif t \le C(E_0+E^\frac{1}{2}(t) \int_0^t  \mathcal D(s)\dif s+E^2(t)).
\end{align}
\end{lemma}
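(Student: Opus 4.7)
The plan is to mirror Lemma \ref{le5} at one higher order of differentiation, exploiting the $r$-differentiated system \eqref{1.19} and the $t$-differentiated system \eqref{1.10} to extract the time-integrated second-order dissipation. The unweighted terms $u_r^2, u_t^2, q_r^2$ in the statement are automatically subsumed by their $r^2$-weighted counterparts from Lemmas \ref{le3} and \ref{le5} since $r\ge 1$; the genuinely new content is the time-integrated control of $r^2(\rho_{tt}^2, \rho_{tr}^2, \rho_{rr}^2, u_{tt}^2, u_{tr}^2, u_{rr}^2, \theta_{tt}^2, \theta_{tr}^2, \theta_{rr}^2, q_{rr}^2)$.

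For $\int_0^t\!\int r^2 u_{rr}^2\,dr\,ds$, I would multiply the $r$-differentiated energy equation $\eqref{1.19}_3$ by $\frac{r^2(u_r+\frac{2}{r}u)_r}{\rho e_\theta}$, producing the positive term $\int\frac{pr^2}{\rho e_\theta}\bigl((u_r+\frac{2}{r}u)_r\bigr)^2 dr$, which after expansion and integration by parts yields $\int\frac{R\theta}{e_\theta}(r^2 u_{rr}^2+2u_r^2)\,dr$ modulo absorbable lower-order terms — this is the direct analog of the multiplier $\frac{1}{\rho e_\theta}(r^2 u)_r$ used in Lemma \ref{le5}. Then multiplying $\eqref{1.19}_1$ by $r^2\rho_{tr}$ and $\eqref{1.19}_2$ by $r^2 u_{tr}$ gives $\int r^2\rho_{tr}^2$ and $\int \rho r^2 u_{tr}^2$ respectively; algebraic use of $\eqref{1.19}_2$ multiplied by $r^2\rho_{rr}$ gives $\int R\theta\, r^2\rho_{rr}^2$; $\eqref{1.19}_3$ yields $\int r^2\theta_{tr}^2$ after inverting $\rho e_\theta$; $\eqref{1.19}_4$ delivers $\int\kappa^2 r^2\theta_{rr}^2$ (the $\tau\rho u\,q_{rr}$ term being absorbed using $|u|\ll 1$); and finally the identity $q_{rr}=(q_r+\frac{2}{r}q)_r-\frac{2}{r}q_r+\frac{2}{r^2}q$ combined with $\eqref{1.19}_3$ produces $\int r^2 q_{rr}^2$ exactly as in the derivation of \eqref{1.58}.

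The double-$t$ dissipation $\int_0^t\!\int r^2(\rho_{tt}^2, u_{tt}^2, \theta_{tt}^2)\,dr\,ds$ is then obtained by applying the same procedure to system \eqref{1.10}: multiply $\eqref{1.10}_1$ by $r^2\rho_{tt}$, $\eqref{1.10}_2$ by $r^2 u_{tt}$ (using $u_{tt}|_{\partial\Omega}=0$ obtained by differentiating $u|_{\partial\Omega}=0$ twice in $t$), and $\eqref{1.10}_3$ by $\frac{r^2}{\rho e_\theta}$ times an appropriate combination of $(u_t+\frac{2}{r}u)_r$ and $\theta_{tt}$. Integration by parts of the viscous contribution produces cross products of the form $(\tfrac{4}{3}\mu+\lambda)\int u_{trr}\,r^2 u_{ttr}\,dr$, which split by Cauchy–Schwarz into $(\tfrac{4}{3}\mu+\lambda)\|ru_{trr}\|_{L^2}^2$ (time-integrated form controlled by Lemma \ref{le6}) and $(\tfrac{4}{3}\mu+\lambda)\|ru_{ttr}\|_{L^2}^2$ (time-integrated form controlled by Lemma \ref{le7}), keeping everything uniform in $\mu, \lambda, \tau$.

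The main obstacle will be bounding the nonlinear remainders $G_i, H_i, F_i$ from the differentiated equations together with the cubic derivative remainders generated by the integrations by parts, and casting them in the form $E^{1/2}(t)\mathcal{D}(t)+E^2(t)$. This is feasible thanks to the pointwise-in-$t$ second-order bounds supplied by Lemma \ref{le8} and the smallness of $E(t)$, but requires careful coefficient tracking to ensure that no $\mu^{-1}$- or $\tau^{-1}$-type singular factors arise, and Young's inequality with sufficiently small parameters to absorb the cross terms back into the positive dissipation on the left-hand side. Once all the estimates from the three preceding steps are collected and the small coefficients chosen consistently, summation yields the asserted bound \eqref{1.60}.
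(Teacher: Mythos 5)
Your high-level strategy (differentiate the system once more, multiply by suitable second derivatives, and push the nonlinear remainders into $E^{1/2}(t)\mathcal D(t)+E^2(t)$) is the paper's strategy, and several of your individual steps coincide with it: $\theta_{rr}$ from $\eqref{1.19}_4$, $\rho_{rr}$ from $\eqref{1.19}_2\times r^2\rho_{rr}$ with the viscous term converted to a total time derivative via the mass equation, $u_{tt}$ from $\eqref{1.10}_2\times r^2u_{tt}$, and $q_{rr}$ read off from $\eqref{1.19}_3$ at the end. However, your assignment of which equation delivers which dissipation term creates circularities that do not close. The clearest one: you propose to obtain \emph{both} $\intx r^2\theta_{tr}^2$ and $\intx r^2q_{rr}^2$ from the single equation $\eqref{1.19}_3$, in which $\rho e_\theta\theta_{tr}$ and $(q_r+\frac{2}{r}q)_r$ appear in one linear combination with $O(1)$ coefficients; that equation can only control one of the two in terms of the other. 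The paper breaks this by taking $\theta_{tr}$ from the $t$-differentiated Cattaneo equation $\eqref{1.10}_4$, where the required right-hand side dissipations ($q_t$, $q_{tr}$, $\tau q_{tt}$) are already supplied by Lemmas \ref{le2}, \ref{le6} and \ref{le7}; only then is $\eqref{1.19}_3$ free to yield $q_{rr}$. The same uncontrolled $\intx r^2 q_{rr}^2$ also contaminates your $u_{rr}$ step, since the multiplier $\frac{r^2(u_r+\frac{2}{r}u)_r}{\rho e_\theta}$ applied to $\eqref{1.19}_3$ produces the cross term with $(q_r+\frac{2}{r}q)_r$ (note that in Lemma \ref{le5} the analogous term was harmless precisely because the first-order $q$-dissipation was already available from Lemma \ref{le3}; its second-order $r$-analogue is not available from Lemma \ref{le6}, which only controls $q_{tr}$).

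A second non-closing loop is your extraction of $\intx\rho r^2u_{tr}^2$ from $\eqref{1.19}_2\times r^2u_{tr}$ together with $\intx R\theta r^2\rho_{rr}^2$ from $\eqref{1.19}_2\times r^2\rho_{rr}$: the same cross term $\intx R\theta\,\rho_{rr}\,r^2u_{tr}$ appears in both estimates with reciprocal $O(1)$ constants (their product is $\approx 1$ for $p=R\rho\theta$ near the constant state), so neither can be absorbed into the other by Young's inequality, and there is no small parameter to exploit. The paper avoids this by first obtaining $u_{tr}$ from the $t$-differentiated \emph{energy} equation, multiplying $\eqref{1.10}_3$ by $r^2(u_r+\frac{2}{r}u)_t$ — the time-derivative analogue of the Lemma \ref{le5} multiplier — where the $\theta_{tt}$ term is integrated by parts in $t$ and absorbed into $\eta\intx r^2u_{tt}^2$, and no $\rho_{rr}$ coupling occurs; only afterwards are $\rho_{tr}$ and $\rho_{rr}$ extracted from the momentum equations and, finally, $u_{rr}$ and $\rho_{tt}$ read off algebraically from the differentiated mass equations. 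Your proposal would need to be reordered along these lines; as written, the chain $u_{rr}\to\rho_{tr}\to u_{tr}\to\rho_{rr}\to\theta_{tr}\to q_{rr}$ cannot be initialized.
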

\begin{proof}
From $\eqref{1.19}_4$ and using Lemma \ref{le6} and \ref{le3}, we have
\begin{align}\label{1.61}
\intx \kappa^2(\theta) r^2 \theta_{rr}^2 \dif r \dif t&=\intx r^2(G_4-\tau(\theta) \rho(q_{tr}+uq_{rr})-q_r)^2 \dif r \nonumber\\
& \le C(E_0+E^\frac{1}{2}(t) \int_0^t  \mathcal D(s)\dif s+E^2(t)).
\end{align}
Similarly, from equation $\eqref{1.10}_4$ and using Lemmas \ref{le2}, \ref{le6} and \ref{le7}, one gets
\begin{align}\label{1.62}
\intx r^2 \theta_{rt}^2 \dif r \dif t \le C(E_0+E^\frac{1}{2}(t) \int_0^t  \mathcal D(s)\dif s+E^2(t)).
\end{align}

Multiplying $\eqref{1.10}_3$ by $r^2(u_r+\frac{2}{r} u)_t$, using Lemma \ref{le6} and \eqref{1.62}, one has{\small
\begin{align*}
&\intx p r^2\left(u_r+\frac{2}{r} u\right)_t^2 \dif r\dif t \\
&=-\intx \left( \rho e_\theta \theta_{tt}+B\theta_{rt}+\left(q_r+\frac{2}{r}q\right)_t -F_3\right) r^2 \left(u_r+\frac{2}{r} u\right)_t \dif r \dif t\\
&\le - \intx  \rho e_\theta \theta_{tt}(r^2 u)_{rt} \dif r \dif t+ \frac{1}{2} \intx p r^2 \left(u_r+\frac{2}{r}u\right)_t^2 \dif r \dif t+C(E_0+E^\frac{1}{2}(t) \int_0^t  \mathcal D(s)\dif s+E^2(t)).
\end{align*}}

Using the boundary condition $u|_{\partial \Omega}=0$, \eqref{1.62} and Lemmas \ref{le2}, \ref{le6}, one has
\begin{align*}
&-\intx \rho e_\theta \theta_{tt} (r^2 u)_{rt} \dif r \dif t \\
&=-\int_0^t \left(\frac{\dif}{\dif t} \int_{\Omega} \rho e_\theta \theta_t (r^2 u)_{tr} \dif r \right) \dif t+\intx (\rho e_\theta)_t \theta_t (r^2 u)_{rt} \dif r \dif t +\intx \rho e_\theta \theta_t (r^2 u)_{ttr}\dif r \dif t\\
&\le -\intx \rho e_\theta \theta_{tr} r^2 u_{tt} \dif r \dif t +C(E_0+E^\frac{1}{2}(t) \int_0^t  \mathcal D(s)\dif s+E^2(t))\\
&\le \eta \intx r^2 u_{tt}^2 \dif r\dif t +C(\eta)(E_0+E^\frac{1}{2}(t) \int_0^t  \mathcal D(s)\dif s+E^2(t)).
\end{align*}

On the other hand, using the boundary condition $u_t|_{\partial_\Omega}=0$, we obtain
\begin{align*}
\intx p r^2 \left(u_r+\frac{2}{r}u\right)_t^2 \dif r \dif t=\intx p r^2(u_{rt}^2+\frac{4}{r} u_t u_{tr}+\frac{4}{r^2} u_t^2 ) \dif r \dif t \\
\ge \intx p(r^2 u_{tr}^2+2u_t^2) \dif x \dif t- CE^\frac{1}{2}(t)\mathcal D(t).
\end{align*}
Therefore, we derived that
\begin{align}\label{1.63}
\intx (r^2 u_{tr}^2+2u_t^2) \dif x \dif t \le \eta \intx r^2 u_{tt}^2 \dif r\dif t +C(\eta)(E_0+E^\frac{1}{2}(t) \int_0^t  \mathcal D(s)\dif s+E^2(t)).
\end{align}

Multiplying $\eqref{1.10}_2$ by $r^2 u_{tt}$,  using \eqref{1.62}, Lemma \ref{le6}, and the boundary condition $u_{tt}|_{\partial\Omega}=0$, one gets
\begin{align*}
&\intx \rho r^2 u_{tt}^2\dif r \dif t =-\intx \left( \rho u u_{tr}+R\rho \theta_{tr}+R\theta \rho_{tr}-F_2\right) r^2 u_{tt}\dif r \dif t\\
&\qquad\qquad+\intx (\frac{4}{3} \mu(\theta)+\lambda(\theta))\left(u_{rr}+\frac{2}{r} u_r-\frac{2}{r^2} u\right)_t r^2 u_{tt} \dif r \dif t\\
&\le -\intx R \theta \rho_{rt} r^2 u_{tt}\dif r \dif t +\frac{1}{2} \intx \rho r^2 u_{tt}^2\dif r \dif t +CE^\frac{1}{2}(t) \int_0^t  \mathcal D(s)\dif s\\
&\qquad\qquad -\int_0^t \frac{\dif}{\dif t} \int_{\Omega} \left(\frac{4}{3} \mu(\theta)+\lambda(\theta)\right)\left(\frac{1}{2} r^2u_{rt}^2+u_t^2\right) \dif r \dif t\\
&\le -\intx R \theta \rho_{rt} r^2 u_{tt}\dif r \dif t +\frac{1}{2} \intx \rho r^2 u_{tt}^2\dif r \dif t +C(E_0+E^\frac{1}{2}(t) \int_0^t  \mathcal D(s)\dif s+E^2(t)).
\end{align*}
Using equation $\eqref{1.10}_1$, Lemmas \ref{le6}-\ref{le7} and \eqref{1.63}, we have
\begin{align*}
&-\intx R\theta \rho_{rt} r^2 u_{tt}\dif r\dif t =\intx R\theta\left( (\rho u)_r+\frac{2}{r}\rho u\right)_r r^2 u_{tt}\dif r \dif t\\
&\le \intx R \theta \rho (u_{rr}+2u_r)r^2 u_{tt} \dif r \dif t+CE^\frac{1}{2}(t) \int_0^t  \mathcal D(s)\dif s\\
&\le\int_0^t \frac{\dif }{\dif t} \int_{\Omega} R\theta \rho (r^2u_r)_r u_t \dif r \dif t -\intx R\theta \rho(r^2u_r)_{rt} u_t \dif r \dif t+CE^\frac{1}{2}(t) \int_0^t  \mathcal D(s)\dif s\\
&\le \intx R\theta \rho (r^2 u_r)_t u_{rt} \dif r \dif t +C(E_0+E^\frac{1}{2}(t) \int_0^t  \mathcal D(s)\dif s+E^2(t))\\
&\le C \eta \intx r^2 u_{tt}^2 \dif r\dif t +C(\eta)(E_0+E^\frac{1}{2}(t) \int_0^t  \mathcal D(s)\dif s+E^2(t)).
\end{align*}
Therefore, by choosing $\eta$ sufficiently small, we get
\begin{align}\label{1.64}
\intx (r^2(u_{tr}^2+u_{tt}^2)+ u_t^2) \dif r \dif t \le C(E_0+E^\frac{1}{2}(t) \int_0^t  \mathcal D(s)\dif s+E^2(t)).
\end{align}
Multiplying the momentum equation $\eqref{1.10}_2$ by $r^2 \rho_{tr}$, using  \eqref{1.62}, \eqref{1.64} and Lemma \ref{le6}, one has
\begin{align*}
\intx R\theta r^2 \rho_{tr}^2 \dif r \dif t =&-\intx (\rho u_{tt}+\rho u u_{tr}+R \rho \theta_{tr}+F_2)r^2 \rho_{tr}\dif r \dif t\\
&+\intx (\frac{4}{3} \mu(\theta)+\lambda(\theta))\left(u_{rr}+\frac{2}{r} u_r-\frac{2}{r^2} u\right)_t r^2 \rho_{tr} \dif r \dif t\\
\le& \frac{1}{2} \intx R \theta r^2 \rho_{tr}^2\dif r \dif t +C(E_0+E^\frac{1}{2}(t) \int_0^t  \mathcal D(s)\dif s+E^2(t))\\
&-\intx \left(\frac{4}{3} \mu(\theta)+\lambda(\theta)\right) \frac{1}{\rho} (\rho_{tr}+u\rho_{rr}+2\rho_r u_r+\frac{2}{r} \rho_r u)_t r^2 \rho_{tr} \dif r \dif t \\
 \le& \frac{1}{2} \intx R \theta r^2 \rho_{tr}^2\dif r \dif t +C(E_0+E^\frac{1}{2}(t) \int_0^t  \mathcal D(s)\dif s+E^2(t))\\
&-\int_0^t \frac{\dif}{\dif t} \int_{\Omega} \left(\frac{4}{3} \mu(\theta)+\lambda(\theta)\right) \frac{1}{2\rho} r^2 \rho_{tr}^2 \dif r\dif t\\
\le& \frac{1}{2} \intx R \theta r^2 \rho_{tr}^2\dif r \dif t +C(E_0+E^\frac{1}{2}(t) \int_0^t  \mathcal D(s)\dif s+E^2(t)),
\end{align*}
which gives
\begin{align}\label{1.65}
\intx R \theta r^2 \rho_{tr}^2\dif r \dif t \le C(E_0+E^\frac{1}{2}(t) \int_0^t  \mathcal D(s)\dif s+E^2(t)).
\end{align}
In a similar way, we can derive that
\begin{align}\label{1.65b}
\intx R \theta r^2 \rho_{rr}^2\dif r \dif t \le C(E_0+E^\frac{1}{2}(t) \int_0^t  \mathcal D(s)\dif s+E^2(t)).
\end{align}
%
Similarly, $\eqref{1.10}_1$, $\eqref{1.19}_1$ together with \eqref{1.64} and \eqref{1.65} imply
\begin{align}\label{1.66}
\intx (r^2(u_{rr}^2+\rho_{tt}^2)+u_r^2) \dif r \dif t \le C(E_0+E^\frac{1}{2}(t) \int_0^t  \mathcal D(s)\dif s+E^2(t)).
\end{align}
Finally, using the energy equation $\eqref{1.19}_3$ and the above estimates, we have
\begin{align}
\intx (r^2 q_{rr}^2+q_r^2)\dif r\dif t &\le \intx (r^2(\theta_{tr}^2+u_{rr}^2)+u_r^2)\dif r\dif t+CE^\frac{1}{2}(t) \int_0^t  \mathcal D(s)\dif s \nonumber\\
&\le C(E_0+E^\frac{1}{2}(t) \int_0^t  \mathcal D(s)\dif s+E^2(t)).
\end{align}
Thus, the proof of Lemma \ref{le9} is finished.
\end{proof}
Now, combining Lemmas \ref{le1} - \ref{le9} with the choice
\[
\delta = \min \left\{ \delta_1,\  \left( \frac{1}{2C} \right)^{\!\!2},\  1 \right\}
\]
where \(C\) is the universal constant from these lemmas, we complete the proof of Proposition \ref{pro}.

\section{Proof of the main theorems}

\textbf{ Proof of Theorem \ref{th1.1}:}
By the definition of $E(0)$, we know that
\[
E(0)\le C_1\left( \|r(\rho_0-1, \theta_0-1, q_0)\|_{H^{3}}^2+\|ru_0\|_{H^4}^2\right)  \le C_1\epsilon_0
\]
with $C_1$ a universal constant independent of $\tau, \lambda, \mu$.
Now, choosing $\epsilon_0$ sufficiently small such that
\[
C_1 C \epsilon_0 <\frac{\delta}{2},
\]
then, by virtue of Proposition \ref{pro}, $E(t)<\frac{\delta}{2}$ which close the a priori assumption $E(t)\le \delta$. Therefore, the estimate \eqref{hu3.1} holds for the local solution $(\rho, u, \theta, q)$. By usual continuation methods, the local solution can be extended uniquely to the global one satisfying the estimate \eqref{hu3.1}. Thus, the proof of Theorem \ref{th1.1} is finished.

{\bf Proof of Theorem \ref{th1.2}  }: Fix $\tau>0$. Let $\epsilon=(\mu, \lambda)$ and $(\rho_\tau^\epsilon, u_\tau^\epsilon, \theta_\tau^\epsilon, q_\tau^\epsilon)$ be the global solutions obtained in Theorem \ref{th1.1}.  Then, we have
\begin{align} \label{4.2}
\sup_{0\le t <\infty} \|(\rho_\tau^\epsilon-1, u_\tau^\epsilon, \theta_\tau^\epsilon-1, q_\tau^\epsilon)\|_{H^2}^2+\int_0^\infty \| D(\rho_\tau^\epsilon, u_\tau^\epsilon, \theta_\tau^\epsilon, q_\tau^\epsilon)\|_{H^1}^2 \dif t \le C E(0),
\end{align}
where $C$ is a constant independent of $\epsilon$.
Thus, there exists $(\rho_\tau^0, u_\tau^0, \theta_\tau^0, q_\tau^0)\in L^\infty([0, \infty), H^2)$ such that
\begin{align*}
(\rho_\tau^\epsilon, u_\tau^\epsilon, \theta_\tau^\epsilon, q_\tau^\epsilon) \rightarrow  (\rho_\tau^0, u_\tau^0, \theta_\tau^0, q_\tau^0) \quad \text{weak}-\ast \quad \mbox{in } L^\infty([0, \infty), H^2)
\end{align*}
Furthermore, since $ \partial_t ( \rho_\tau^\epsilon, u_\tau^\epsilon, \theta_\tau^\epsilon, q_\tau^\epsilon) $ are bounded in $L^2([0,\infty); H^1)$, by classical compactness argument, $(\rho_\tau^\epsilon, u_\tau^\epsilon, \theta_\tau^\epsilon, q_\tau^\epsilon)$ are relatively compact in $C([0, T], H_{loc}^{2-\delta_0})$ for any $T>0$ and $0<\delta_0<2$. As a consequence, as $\epsilon \rightarrow 0$ and up to subsequences,
\begin{align*}
(\rho_\tau^\epsilon, u_\tau^\epsilon, \theta_\tau^\epsilon, q_\tau^\epsilon) \rightarrow  (\rho_\tau^0, u_\tau^0, \theta_\tau^0, q_\tau^0) \quad \text{strongly} \quad \mbox{in } C^0([0, T], H^{2-\delta_0})
\end{align*}
On the other hand, quantities involving $\epsilon=(\mu, \lambda)$, such as $\left( \left( \frac{4}{3} \mu(\theta) + \lambda(\theta) \right) \left( u_r + \frac{2}{r} u \right) \right)_r$, converge to zero almost everywhere as $\epsilon \to 0$. Therefore, it is sufficient to pass  to the limit in \eqref{1.5} and $(\rho_\tau^0, u_\tau^0, \theta_\tau^0, q_\tau^0)$ satisfies the following hyperbolized Euler-Cattaneo-Christov equations
\begin{align}\label{4.3}
\begin{cases}
	\rho_t+(\rho u)_r+\frac{2}{r} \rho u = 0, \\
	\rho u_t+\rho u u_r+ p_r = 0,\\
	\rho e_\theta \theta_t + (\rho u e_\theta - \frac{2a(\theta)}{Z(\theta)} q)\theta_r + p\left(u_r+\frac{2}{r} u\right) + q_r + \frac{2}{r} q =
	 \left(\frac{2}{\tau(\theta)}+\frac{4}{r} u\right) a(\theta) q^2,  \\
	\tau (\theta) \rho \left(q_t+u q_r+\frac{2}{r} u q\right) + q + \kappa(\theta) \theta_r = 0.
\end{cases}
\end{align}
This finishes the proof of Theorem \ref{th1.2}.

{\bf Proof of Theorem\ref{th1.3}}: Fix $\epsilon=(\mu, \lambda)$. Let  $(\rho_\tau^\epsilon, u_\tau^\epsilon, \theta_\tau^\epsilon, q_\tau^\epsilon)$ be the global solutions obtained in Theorem \ref{th1.1}.  Then, we have
\begin{align} \label{4.4}
\sup_{0\le t <\infty} \|(\rho_\tau^\epsilon-1, \theta_\tau^\epsilon-1, q_\tau^\epsilon)\|_{H^2}^2+\int_0^\infty \| D(\rho_\tau^\epsilon,  \theta_\tau^\epsilon, q_\tau^\epsilon)\|_{H^1}^2 \dif t \le C E(0),
\end{align}
and
\begin{align}\label{4.4-1}
\| u_\tau^\epsilon\|_{H^3}^2+\int_0^\infty  \|D u_\tau^\epsilon\|_{H^2}^2 \dif t \le C E(0),
\end{align}
where $C$ is a constant independent of $\tau$ but possible depends on $\epsilon$.
Thus, there exists $(\rho_0^\epsilon-1, \theta_0^\epsilon-1, q_0^\epsilon)\in L^\infty([0, \infty), H^2)$ and $u_0^\epsilon \in L^\infty([0, \infty), H^3)$ such that
\begin{align*}
(\rho_\tau^\epsilon,  \theta_\tau^\epsilon, q_\tau^\epsilon) \rightarrow  (\rho_0^\epsilon,  \theta_0^\epsilon, q_0^\epsilon) \quad \text{weak}-\ast \quad \mbox{in } L^\infty([0, \infty), H^2)
\end{align*}
and
\begin{align*}
 u_\tau^\epsilon  \rightarrow    u_0^\epsilon  \quad \text{weak}-\ast \quad \mbox{in } L^\infty([0, \infty), H^3).
\end{align*}
Furthermore, since $ \partial_t ( \rho_\tau^\epsilon, \theta_\tau^\epsilon, q_\tau^\epsilon) $  and $\partial_t u_\tau^\epsilon$  are bounded in $L^2([0,\infty); H^1)$ and $L^2([0,\infty); H^2)$, respectively, by classical compactness argument, $(\rho_\tau^\epsilon,  \theta_\tau^\epsilon, q_\tau^\epsilon)$ and $u_\tau^\epsilon$ are relatively compact in $C([0, T], H_{loc}^{2-\delta_0})$ and $C([0, T], H_{loc}^{3-\delta_0})$, respectively,   for any $T>0$ and $0<\delta_0<2$. As a consequence, as $\tau \rightarrow 0$ and up to subsequences,
\begin{align*}
(\rho_\tau^\epsilon, \theta_\tau^\epsilon, q_\tau^\epsilon) \rightarrow  (\rho_0^\epsilon,  \theta_0^\epsilon, q_0^\epsilon) \quad \text{strongly} \quad \mbox{in } C^0([0, T], H_{loc}^{2-\delta_0})
\end{align*}
and
\begin{align*}
  u_\tau^\epsilon \rightarrow  u_0^\epsilon  \quad \text{strongly} \quad \mbox{in } C^0([0, T], H_{loc}^{3-\delta_0})
\end{align*}
 On the other hand, as $\tau \rightarrow 0$, we have
\[
\tau(\theta_\tau^\epsilon) \rho_\tau^\epsilon  (\partial_t q_\tau^\epsilon+u_\tau^\epsilon \partial_r q_\tau^\epsilon+\frac{2}{r} u_\tau^\epsilon q_\tau^\epsilon) \rightharpoonup 0\quad \mathrm{in} \,\; \mathcal {D}^\prime((0,\infty)\times \Omega),
\]
which gives $q_0^\epsilon=-\kappa(\theta_0^\epsilon) \partial_r\theta_0^\epsilon, a.e.$.
Therefore, it is sufficient to pass to the limit in \eqref{1.5} and $(\rho_0^\epsilon, u_0^\epsilon, \theta_0^\epsilon) $ satisfies the  compressible Navier-Stokes-Fourier equations in spherical symmetry as
\begin{align}\label{4.5}
\begin{cases}
	\rho_t+(\rho u)_r+\frac{2}{r} \rho u = 0, \\
	\rho u_t+\rho u u_r+ p_r = \left( \left(\frac{4}{3} \mu(\theta)+\lambda(\theta)\right)\left(u_{r}+\frac{2}{r}u \right)\right)_r ,\\
	\rho C_v \theta_t + \rho u C_v \theta_r + p\left(u_r+\frac{2}{r} u\right) -\frac{1}{r^2} (r^2 \kappa(\theta) \theta_r)_r \\
	\qquad\qquad\qquad= \mu(\theta)\left( 2u_r^2+\frac{4}{r^2}u^2 - \frac{2}{3} \left(u_r+\frac{2}{r} u\right)^2\right) + \lambda(\theta)\left(u_r+\frac{2}{r} u\right)^2.
 \end{cases}
\end{align}
This finishes the proof of Theorem \ref{th1.3}.

\end{document}